\theoremstyle{definition}
\newtheorem{theorem}{Theorem}[section]
\newtheorem{definition}[theorem]{Definition}
\newtheorem{proposition}[theorem]{Proposition}
\newtheorem{lemma}[theorem]{Lemma}
\newtheorem{corollary}[theorem]{Corollary}
\newtheorem{example}[theorem]{Example}
\DeclareMathOperator{\fix}{fix}
\newcommand{\N}{\mathbb{N}}
\newcommand{\Z}{\mathbb{Z}}
\newcommand{\iso}{\widetilde}
\newcommand{\s}{\mathcal}
\tikzstyle{wvertex}=[circle, draw, fill=white, inner sep=0pt, minimum size=6pt]
\tikzstyle{rvertex}=[circle, draw, fill=white, inner sep=0pt, minimum size=6pt]
\tikzstyle{bvertex}=[circle, draw, fill=lightgray, inner sep=0pt, minimum size=6pt]
\newcommand{\wvertex}{\node[wvertex]}
\newcommand{\bvertex}{\node[bvertex]}
\newcommand{\rvertex}{\node[rvertex]}
\newcommand{\threevertexgraphs}{
\wvertex (001) at (-4.5,1.9392) {1};
\wvertex (002) at (-5.1,0.9) {2};
\wvertex (003) at (-3.9,0.9) {3};

\wvertex (101) at (-1.5,1.9392) {1};
\wvertex (102) at (-2.1,0.9) {2};
\wvertex (103) at (-0.9,0.9) {3};
\path
(101) edge (102);

\wvertex (201) at (1.5,1.9392) {1};
\wvertex (202) at (0.9,0.9) {2};
\wvertex (203) at (2.1,0.9) {3};
\path
(201) edge (203);

\wvertex (301) at (4.5,1.9392) {1};
\wvertex (302) at (3.9,0.9) {2};
\wvertex (303) at (5.1,0.9) {3};
\path
(302) edge (303);

\wvertex (011) at (-4.5,-0.9) {1};
\wvertex (012) at (-5.1,-1.9392) {2};
\wvertex (013) at (-3.9,-1.9392) {3};
\path
(011) edge (012)
(011) edge (013)
(012) edge (013);

\wvertex (111) at (-1.5,-0.9) {1};
\wvertex (112) at (-2.1,-1.9392) {2};
\wvertex (113) at (-0.9,-1.9392) {3};
\path
(111) edge (113)
(112) edge (113);

\wvertex (211) at (1.5,-0.9) {1};
\wvertex (212) at (0.9,-1.9392) {2};
\wvertex (213) at (2.1,-1.9392) {3};

\path
(211) edge (212)
(212) edge (213);

\wvertex (311) at (4.5,-0.9) {1};
\wvertex (312) at (3.9,-1.9392) {2};
\wvertex (313) at (5.1,-1.9392) {3};
\path
(311) edge (312)
(311) edge (313);

}
\newcommand{\unlabeledthreevertexgraphs}{
\wvertex (001) at (-4.5,1.9392) {};
\wvertex (002) at (-5.1,0.9) {};
\wvertex (003) at (-3.9,0.9) {};

\wvertex (101) at (-1.5,1.9392) {};
\wvertex (102) at (-2.1,0.9) {};
\wvertex (103) at (-0.9,0.9) {};
\path
(101) edge (102);

\wvertex (201) at (1.5,1.9392) {};
\wvertex (202) at (0.9,0.9) {};
\wvertex (203) at (2.1,0.9) {};
\path
(201) edge (203);

\wvertex (301) at (4.5,1.9392) {};
\wvertex (302) at (3.9,0.9) {};
\wvertex (303) at (5.1,0.9) {};
\path
(302) edge (303);

\wvertex (011) at (-4.5,-0.9) {};
\wvertex (012) at (-5.1,-1.9392) {};
\wvertex (013) at (-3.9,-1.9392) {};
\path
(011) edge (012)
(011) edge (013)
(012) edge (013);

\wvertex (111) at (-1.5,-0.9) {};
\wvertex (112) at (-2.1,-1.9392) {};
\wvertex (113) at (-0.9,-1.9392) {};
\path
(111) edge (113)
(112) edge (113);

\wvertex (211) at (1.5,-0.9) {};
\wvertex (212) at (0.9,-1.9392) {};
\wvertex (213) at (2.1,-1.9392) {};

\path
(211) edge (212)
(212) edge (213);

\wvertex (311) at (4.5,-0.9) {};
\wvertex (312) at (3.9,-1.9392) {};
\wvertex (313) at (5.1,-1.9392) {};
\path
(311) edge (312)
(311) edge (313);

}
\newcommand{\ainv}{\mathcal{A}^{\langle -1\rangle}}
\begin{document}
\lstset{language=Python}

\title{Combinatorial Species and Graph Enumeration}

\author{ \begin{tabular}{cc} \begin{tabular}{c}
Andy Hardt \\
Department of Mathematics \\
Carleton College \\
Northfield, MN  55057 USA \\
\texttt{hardta@carleton.edu}
\end{tabular} & \begin{tabular}{c}
Pete McNeely \\
Department of Mathematics \\
Carleton College\\
Northfield, MN  55057 USA \\
\texttt{mcneelyp@carleton.edu}
\end{tabular} \bigskip \\ \begin{tabular}{c}
Tung Phan \\
Department of Mathematics \\
Carleton College\\
Northfield, MN  55057 USA \\
\texttt{phant@carleton.edu}
\end{tabular} & \begin{tabular}{c}
Justin M. Troyka \\
Department of Mathematics \\
Carleton College\\
Northfield, MN  55057 USA \\
\texttt{troykaj@carleton.edu}
\end{tabular} \end{tabular} }

\maketitle

\begin{abstract} In enumerative combinatorics, it is often a goal to enumerate both labeled and unlabeled structures of a given type. The theory of combinatorial species is a novel toolset which provides a rigorous foundation for dealing with the distinction between labeled and unlabeled structures. The cycle index series of a species encodes the labeled and unlabeled enumerative data of that species. Moreover, by using species operations, we are able to solve for the cycle index series of one species in terms of other, known cycle indices of other species. Section 3 is an exposition of species theory and Section 4 is an enumeration of point-determining bipartite graphs using this toolset. In Section 5, we extend a result about point-determining graphs to a similar result for point-determining $\Phi$-graphs, where $\Phi$ is a class of graphs with certain properties. Finally, Appendix A is an expository on species computation using the software Sage \cite{Sage} and Appendix B uses Sage to calculate the cycle index series of point-determining bipartite graphs. \end{abstract}

\section{Introduction}

Species theory was introduced in 1981 by Andr\'{e} Joyal \cite{Joyal}. Joyal was a category theorist who realized that category theory could be applied to enumerative combinatorics. Our exposition up until Section 3.5.1 is due to Joyal's work; \cite{1998} was our source for most of this material. The rest of the exposition is due to several works on the subject, including \cite{Bousquet}, \cite{dissertation}, \cite{Andrew_Paper}, and \cite{LaBelle}. We use the computer algebra system Sage \cite{Sage} for our species calculations. Much of the Sage code used is due to Andrew Gainer-Dewar, as is most of the insight in Section 4.1.

The purpose of species theory is to make rigorous the distinction between labeled and unlabeled structures. Frequently, it is much easier to work with labeled structures than unlabeled, and the cycle index series allows us to encode both. Therefore, species theory is a valuable tool for unlabeled enumerations in particular. A species can be thought of as a machine for turning labels into structures, and this way of thinking allows us to capture the automorphism data of the structures.

Section 2 covers some graph preliminaries. The reader with a basic understanding of graph theory may skip this section and use it as a reference.

Section 3 covers the basic definition of a combinatorial species, and some species theory preliminaries. Subsection 3.1 defines combinatorial equality; 3.2 defines the associated generating series; 3.3 covers algebraic operations on species; 3.4 defines virtual species and the species logarithm $\Omega$; finally, 3.5 is an exposition of $\Gamma$-species and quotient species.

Section 4 is a demonstration of how to apply some of the species operations introduced in Section 3 to enumerate the species of point-determining bipartite graphs. This enumeration in the unlabeled case appears to be original. The computation of the associated series is done in Sage \cite{Sage} in Appendix B, after a Sage exposition in Appendx A. These calculations are a good demonstration of the power of Sage.

Section 5 is a generalization of some results by Gessel and Li \cite{2007} about point-determining graphs and graphs without endpoints. In Subsection 5.1, we generalize a standard result for point-determining graphs to point-determining versions of various classes of graphs. In 5.2, we do a similar generalization about graphs without endpoints. Finally, in 5.3, we combine these results to generalize a result relating point-determining graphs and graphs without endpoints.

\section{Graph preliminaries}
In this section we define some basic graph theoretic terms that will come up time and again in the coming sections. There is little commentary with these definitions because it is assumed that the reader is familiar with them.

\begin{definition} A \emph{graph} $G = (V,E)$ is a set $V$ together with a set $E$ of unordered pairs of distinct elements of $V$. The elements of $V$ are called \emph{vertices} of $G$, and the elements of $E$ are called \emph{edges} of $G$. \end{definition}

\begin{figure}[h!]
\[\begin{tikzpicture}

\wvertex (001) at (-4.5, 1.9) {1};
\wvertex (002) at (-4.5, 3.0) {2};
\wvertex (003) at (-3, 1.0) {5};
\wvertex(004) at (-3.5, 4){3};
\wvertex(005) at (-2, 2.0){4};
\path
(001) edge (002)
(001) edge (003)
(003) edge (005)
(002) edge (004)
(003) edge (004);

\end{tikzpicture}\]
\caption{An example of a graph.}
\label{fig:graph}
\end{figure}
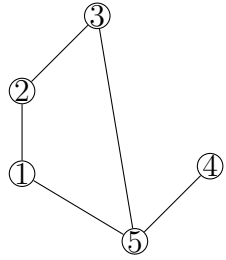

An example of a graph is seen in Figure \ref{fig:graph}. Note that by this definition, we are dealing with a simple undirected graph: a graph with no loops from a vertex to itself, no repeated edges, and no directions on its edges.

\begin{definition} Two vertices $u$ and $v$ in a graph $G$ are called \emph{adjacent} if there is an edge between them, that is if $\{u,v\}\in E$. Given a vertex $v\in V$ of a graph $G$, the \emph{neighborhood} of $v$ is the set \[\{u\in V : \{u,v\}\in E\}.\] That is, the neighborhood of $v$ is the set of vertices adjacent to it in $G$.\end{definition}

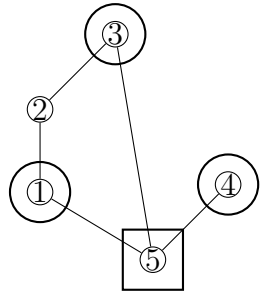
\begin{figure}[h!]
\[\begin{tikzpicture}

\draw [black, thick] (-2.0, 2.0) circle (0.4);
\draw [black, thick] (-4.5, 1.9) circle(0.4);
\draw [black, thick] (-3.4, 0.6) rectangle (-2.6, 1.4);
\draw [black, thick] (-3.5, 4) circle(0.4);

\wvertex (001) at (-4.5, 1.9) {1};
\wvertex (002) at (-4.5, 3.0) {2};
\wvertex (003) at (-3, 1.0){5};
\wvertex(004) at (-3.5, 4){3};
\wvertex(005) at (-2, 2.0){4};
\path
(001) edge (002)
(001) edge (003)
(003) edge (005)
(002) edge (004)
(003) edge (004);

\end{tikzpicture}\]
\caption{The neighborhood of $5$ (in a box) is $\{1, 3, 4\}$ (circled). }
\end{figure}

\begin{definition} The \emph{complement} of a graph $G = (V,E)$ is the graph $H = (V,E')$, where for all $u,v\in V$, $\{u,v\}\in E' \iff \{u,v\}\notin E$. That is, the complement of $G$ is the graph in which all vertices that were not adjacent in $G$ are now adjacent and all vertices that were adjacent in $G$ are not adjacent. See Figure \ref{fig:complement} \end{definition}

\begin{figure}[h!]
\[\begin{tikzpicture}

\wvertex (001) at (-4.5, 1.9) {1};
\wvertex (002) at (-4.5, 3.0) {2};
\wvertex (003) at (-3, 1.0){5};
\wvertex(004) at (-3.5, 4){3};
\wvertex(005) at (-2, 2.0){4};
\path
(001) edge (002)
(001) edge (003)
(003) edge (005)
(002) edge (004)
(003) edge (004);

\wvertex (101) at (0.5, 1.9) {1};
\wvertex (102) at (0.5, 3.0) {2};
\wvertex (103) at (2, 1.0){5};
\wvertex(104) at (1.5, 4){3};
\wvertex(105) at (3, 2.0){4};
\path
(101) edge (104)
(101) edge (105)
(102) edge (103)
(102) edge (105)
(104) edge (105);

\node[draw=none] (n1) at (-1.7,3) {};
\node[draw=none] (n2) at (-0.1,3) {};

\draw [thick] (n1) edge [<->] (n2);

\end{tikzpicture}\]
\caption{The same graph as in Figure \ref{fig:graph}, along with its complement.}
\label{fig:complement}
\end{figure}
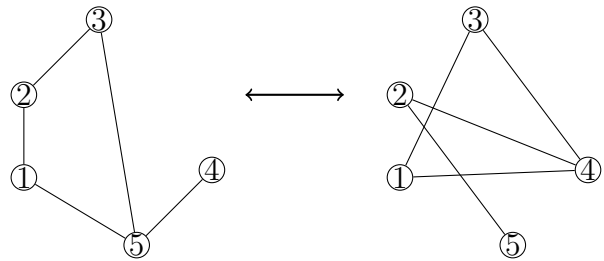

\begin{definition} A \emph{path} in a graph $G$ is a sequence $u_1,\ldots,u_n$ of vertices of $G$ so that $\{u_i,u_{i+1}\}\in E, i = 1,\ldots, n-1$. $G$ is called \emph{connected} if for any two vertices $v$ and $w$ of $G$ there is a path $u_1,\ldots,u_n$ in $G$ with $v=u_1$ and $w=u_n$. $G$ is called \emph{disconnected} if it is not connected. See Figures \ref{fig:connected} and \ref{fig:disconnected}. \end{definition}

\begin{figure}[h!]
	\begin{minipage}[b]{0.5 \linewidth}
		\[\begin{tikzpicture}

\wvertex (001) at (-3.5, 1.9) {1};
\wvertex (002) at (-3.5, 3.0) {2};
\wvertex (003) at (-2, 1.0){5};
\wvertex(004) at (-2.5, 4){3};
\wvertex(005) at (-1, 2.0){4};
\path
(001) edge (002)
(001) edge (003)
(003) edge (005)
(002) edge (004)
(003) edge (004);

\wvertex (101) at (0, 1.9) {6};
\wvertex (102) at (0, 3.0) {7};
\wvertex(104) at (1, 4){8};
\wvertex(105) at (2, 2.0){9};
\wvertex (103) at (3, 3.0){10};
\path
(101) edge (104)
(101) edge (105)
(102) edge (105)
(104) edge (105);

\path
(005) edge (101)
(005) edge (102)
(105) edge (103);

		\end{tikzpicture}\]
\caption{A connected graph.}
\label{fig:connected}
	\end{minipage}
	\begin{minipage}[b]{0.5\linewidth}
		\[\begin{tikzpicture}

\wvertex (001) at (-3.5, 1.9) {1};
\wvertex (002) at (-3.5, 3.0) {2};
\wvertex (003) at (-2, 1.0){5};
\wvertex(004) at (-2.5, 4){3};
\wvertex(005) at (-1, 2.0){4};
\path
(001) edge (002)
(001) edge (003)
(003) edge (005)
(002) edge (004)
(003) edge (004);

\wvertex (101) at (0, 1.9) {6};
\wvertex (102) at (0, 3.0) {7};
\wvertex(104) at (1, 4){8};
\wvertex(105) at (2, 2.0){9};
\wvertex (103) at (3, 3.0){10};
\path
(101) edge (104)
(101) edge (105)
(102) edge (105)
(104) edge (105);

		\end{tikzpicture}\]
\caption{A disconnected graph with 3 connected components.}
\label{fig:disconnected}
	\end{minipage}
\end{figure}

\begin{definition} A \emph{bicolored} graph is a graph where each vertex is colored with one of two colors (say, red or blue) such that no two vertices of the same color are adjacent. A \emph{bipartite} graph is a graph that admits a \emph{proper bicoloring}---a graph whose vertices can be colored with red or blue to produce a bicolored graph. See Figures \ref{fig:bipartite} and \ref{fig:bicolored}.\end{definition}

\begin{figure}[h!]
\begin{minipage}[b]{0.5\linewidth}
\[\begin{tikzpicture}

\wvertex (001) at (-2, 0) {1};
\wvertex (002) at (-2, 1) {8};
\wvertex (003) at (-2, 2){4};
\wvertex(004) at (-2, 3){3};
\wvertex(005) at (-2, 4){6};

\wvertex (101) at (2, 0.5) {9};
\wvertex (102) at (2, 1.5) {7};
\wvertex(103) at (2, 2.5){2};
\wvertex (104) at (2, 3.5){5};

\path
(001) edge (104)
(001) edge (102)
(002) edge (103)
(002) edge (101)
(002) edge (104)
(004) edge (104)
(005) edge (104);

\end{tikzpicture}\]
\caption{A bipartite graph.}
\label{fig:bipartite}
\end{minipage}
\begin{minipage}[b]{0.5\linewidth}
\[\begin{tikzpicture}

\rvertex (001) at (-2, 0) {1};
\rvertex (002) at (-2, 1) {8};
\rvertex (003) at (-2, 2){4};
\rvertex(004) at (-2, 3){3};
\rvertex(005) at (-2, 4){6};

\bvertex (101) at (2, 0.5) {9};
\bvertex (102) at (2, 1.5) {7};
\bvertex(103) at (2, 2.5){2};
\bvertex (104) at (2, 3.5){5};

\path
(001) edge (104)
(001) edge (102)
(002) edge (103)
(002) edge (101)
(002) edge (104)
(004) edge (104)
(005) edge (104);

\end{tikzpicture}\]
\caption{A bicolored graph.}
\label{fig:bicolored}
\end{minipage}
\end{figure}

A bipartite graph may be properly bicolored in multiple ways, as seen in Figure \ref{fig:anotherway}, and in the following proposition.

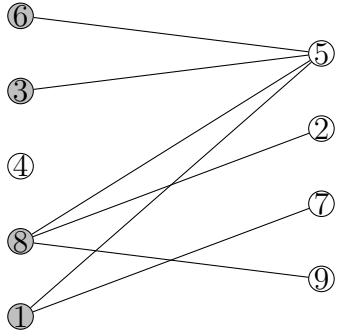
\begin{figure}[h!]
\[\begin{tikzpicture}

\bvertex (001) at (-2, 0) {1};
\bvertex (002) at (-2, 1) {8};
\rvertex (003) at (-2, 2){4};
\bvertex(004) at (-2, 3){3};
\bvertex(005) at (-2, 4){6};

\rvertex (101) at (2, 0.5) {9};
\rvertex (102) at (2, 1.5) {7};
\rvertex(103) at (2, 2.5){2};
\rvertex (104) at (2, 3.5){5};

\path
(001) edge (104)
(001) edge (102)
(002) edge (103)
(002) edge (101)
(002) edge (104)
(004) edge (104)
(005) edge (104);

\end{tikzpicture}\]
\caption{A different proper bicoloring of the bipartite graph in Figure \ref{fig:bipartite}.}
\label{fig:anotherway}
\end{figure}

\pagebreak

\begin{proposition} \label{thm:bicoloredways} If $k$ is the number of connected components of a bipartite graph, $G$, then $G$ may be properly bicolored in $2^k$ ways.\end{proposition}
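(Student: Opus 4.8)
The plan is to first reduce to the case of a connected graph, and then to show that a connected bipartite graph admits exactly two proper bicolorings. For the reduction, I would observe that $G$ decomposes into its connected components $C_1, \ldots, C_k$, between which there are no edges. Consequently, an assignment of red or blue to each vertex of $G$ is a proper bicoloring if and only if its restriction to each $C_i$ is a proper bicoloring of $C_i$: the only constraints involve pairs of adjacent vertices, and any two adjacent vertices lie in a common component. By the multiplication principle, the number of proper bicolorings of $G$ is the product over $i$ of the number of proper bicolorings of $C_i$. It then suffices to prove that each connected component admits exactly two proper bicolorings, yielding $2 \cdot 2 \cdots 2 = 2^k$.

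For the connected case, fix a component $C$. Since $G$ is bipartite, $C$ inherits at least one proper bicoloring $c$, namely the restriction of any proper bicoloring of $G$. I would then show that any proper bicoloring $c'$ of $C$ is either equal to $c$ or equal to the coloring $\bar c$ obtained by swapping the two colors at every vertex. The key lemma is that a proper bicoloring of a connected graph is determined by the color of a single vertex: fixing any $v \in C$, for any other vertex $u$ there is a path $v = u_1, u_2, \ldots, u_n = u$, and since adjacent vertices receive distinct colors, the color of each $u_{i+1}$ is forced by that of $u_i$. By induction along the path, the color of $u$ is determined by the color of $v$ and the parity of the path length. Hence if $c'(v) = c(v)$ then $c' = c$ on all of $C$, and if $c'(v) \neq c(v)$ then $c' = \bar c$ on all of $C$. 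Since both $c$ and $\bar c$ are genuinely proper (swapping colors preserves the property that adjacent vertices differ), $C$ has exactly two proper bicolorings.

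The main obstacle --- and really the only substantive point --- is the propagation lemma in the connected case: verifying that the color of one vertex forces the colors of all others, and that this forcing is consistent (i.e.\ that comparing $c'$ to $c$ along different paths produces no contradiction). I would sidestep any consistency worry by comparing two colorings that are \emph{both} already assumed proper, rather than constructing a coloring from scratch; then the alternation forced along any single path suffices, and a standard induction on distance from $v$ (equivalently, a breadth-first traversal of $C$) completes the argument. The reduction step and the observation that $\bar c$ is again proper are routine.
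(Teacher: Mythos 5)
Your proposal is correct and follows essentially the same route as the paper's proof: reduce to connected components (whose colorings may be chosen independently), then show each component admits exactly two proper bicolorings because the color of one vertex propagates along paths to force all others. The only cosmetic difference is that you phrase the propagation as an induction on distance from $v$ comparing two given proper colorings, whereas the paper argues by a minimal counterexample along a shortest path from $v$ --- the same idea in different dress.
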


\begin{proof} Let $G$ be a bipartite graph with $k$ connected components, and let $H$ be a connected component of $G$. Since $G$ is bipartite, $H$ is bipartite. So, let $A$ and $B$ partition the vertex set of $H$ so that no two vertices in $A$ are adjacent and no two vertices in $B$ are adjacent. Now without loss of generality, let $v\in A$ be colored red. Color every other vertex in $H$ so that no two vertices of the same color are adjacent (a proper bicoloring). We will show that every vertex in $A$ is colored red and every vertex in $B$ is colored blue (call this the \emph{correct coloring}).

Suppose that we do not get the correct coloring. Choose a vertex $w\neq v$ that does not have the correct coloring so that the shortest path from $v$ to $w$ is minimal. Without loss of generality, let $w\in A$, and let $u$ be the vertex adjacent to $w$ on a shortest path from $v$ to $w$. Then, $u\in B$. Thus, there is a shorter path from $v$ to $u$ than from $v$ to $w$ so, $u$ has a correct coloring. But now this means that $u$ and $w$ have the same color, a contradiction.

Thus, any connected component of $G$ can be properly bicolored in two ways: pick a vertex and color it red, or blue. Additionally, by the definition of connected, no edge in any connected component is adjacent to any vertex in another component. Therefore, we can make this choice independently for each component. Thus, there are $2^k$ ways to properly bicolor $G$.\end{proof}

In fact, a connected bipartite graph may be properly bicolored in exactly two ways: bicoloring the graph one way, or bicoloring the graph with all of the colors switched. This will be important later when we discuss quotient species.

\section{Species theory}

A species is a way of thinking about a set of combinatorial structures. Naturally speaking, a species is a function that sends a set of labels to a set of structures. Species theory allows us to manipulate such structures in ways we would not be able to otherwise.

\begin{definition} \label{def:spec} For a finite set $U$, a \emph{species} $\s{F}$ is a rule that produces a finite set $\s{F}[U]$, which is called the \emph{set of $\s{F}$-structures on $U$}. Additionally, for a bijection $\sigma: U \rightarrow V$, $\s{F}$ produces a function $\s{F}[\sigma]: \s{F}[U] \rightarrow \s{F}[V]$, which is called the \emph{transport of $\s{F}$-structures along $\sigma$}. This function $\s{F}[\sigma]$ must satisfy the following functor properties:

\begin{itemize}
\item For all bijections $\sigma: U \rightarrow V$ and $\tau: V \rightarrow W$:
$$\s{F}[\tau \circ \sigma] = \s{F}[\tau] \circ \s{F}[\sigma]$$

\item For the identity map $Id_U: U \rightarrow U$:
$$\s{F}[Id_U] = Id_{\s{F}[U]}$$
\end{itemize}
\end{definition}

For instance, the graph species $\s{G}$ is a function that turns a set of labels into the set of graphs on those labels. More rigorously,

\begin{example}
Given a set of vertices $U$, we define the species of simple graphs $\mathcal{G}[U]$ as:
$$\mathcal{G}[U] = \left\{(U, E) \left| E \subseteq \binom{U}{2} \right.\right\}$$
where $\binom{U}{2}$ is the set of unordered pairs of distinct elements of $U$. Moreover, for any bijection $\sigma: U \rightarrow V$, we define the transport of $\mathcal{G}$-structures along $\sigma$ as the relabeling of vertices in $U$ by the vertices in $V$ using the bijection $\sigma$. 

Given $\{1,2,3\}$ as the set of labels, we obtain the set of all possible graphs on three labels, $\s{G}[\{1,2,3\}]$, as depicted in Figure \ref{3labeled}.

	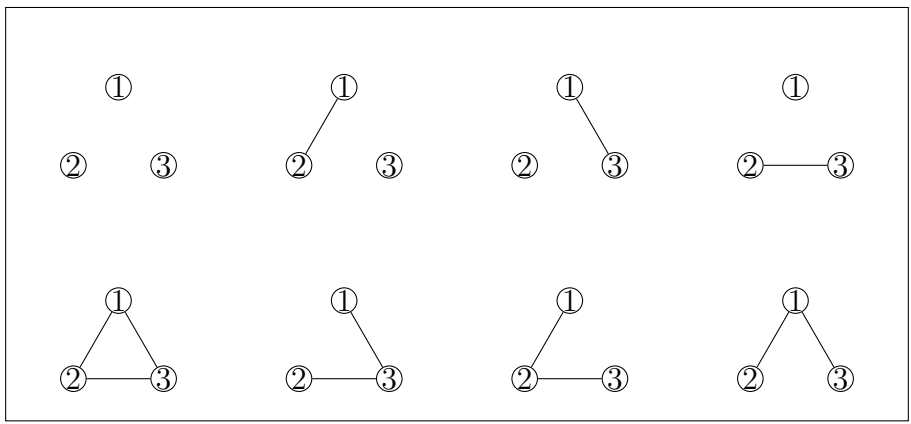
\begin{figure}[h!]
			\[\begin{tikzpicture}
            \threevertexgraphs
            \draw (-6, -2.5) rectangle (6,3);

			\end{tikzpicture}\]
            \caption{The eight labeled graphs on three vertices.}\label{3labeled}
	\end{figure}
 
Using species transports, we can permute the labels of such graphs. Consider the permutation $\sigma = (123)$. The transport $G[\sigma]$ maps the edgeless graph to itself and the complete graph to itself. It also creates an orbit of length three among those graphs with exactly one edge, and an orbit of length three among those graphs with exactly two edges (depicted in Figure \ref{123orb}).

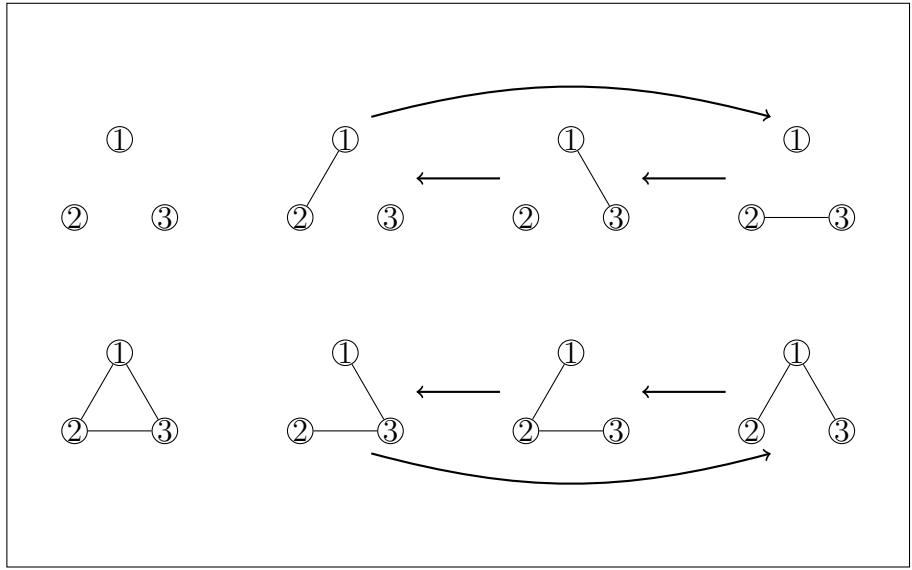
\begin{figure}[h!]
\begin{minipage}[b]{0.15\linewidth}
\[\begin{tikzpicture}
\node[draw=none] at (0:0) {};
\end{tikzpicture}\]
\end{minipage}
\begin{minipage}[b]{0.85\linewidth}
\begin{tikzpicture}

\threevertexgraphs

\node[draw=none] (h6) at (0.7,1.42) {};
\node[draw=none] (h7) at (2.3,1.42) {};

\node[draw=none] (h6') at (0.7,-1.42) {};
\node[draw=none] (h7') at (2.3,-1.42) {};
\node[draw=none] (h8') at (3.7,-1.42) {};

\node[draw=none] (h1) at (-1.3,2.2) {};
\node[draw=none] (h5) at (-0.7,1.42) {};

\node[draw=none] (h2) at (4.3,2.2) {};
\node[draw=none] (h8) at (3.7,1.42) {};

\node[draw=none] (h3) at (-1.3,-2.2) {};
\node[draw=none] (h5') at (-0.7,-1.42) {};

\node[draw=none] (h4) at (4.3,-2.2) {};

\put(-12,47) {$\displaystyle{(123)}$}
\put(-12,-33) {$\displaystyle{(123)}$}
\put(73,47) {$\displaystyle{(123)}$}
\put(73,-33) {$\displaystyle{(123)}$}
\put(30,80) {$\displaystyle{(123)}$}
\put(30,-85){$\displaystyle{(123)}$}

\draw [thick] (h2) edge[out=165, in=15, <-] (h1);
\draw [thick] (h4) edge[out=195, in=-15, <-] (h3);
\draw [thick] (h5) edge [<-] (h6);
\draw [thick] (h7) edge [<-] (h8);
\draw [thick] (h5') edge [<-] (h6');
\draw [thick] (h7') edge [<-] (h8');

\draw (-6, -3.75) rectangle (6,3.75);

\end{tikzpicture}
\end{minipage}
\caption{Applying $\s{G}[(123)]$ to the eight labeled graphs on three vertices.}\label{123orb}
\end{figure}

\end{example}

\subsection{Associated series}
For a species of structure $\s{F}$, there exist three power series that allow us to enumerate $\s{F}$-structures. These power series are the exponential generating series, the type generating series, and the cycle index series.

\subsubsection{Exponential generating series}
The exponential generating series is a powerful tool that allows us to enumerate the labeled $\s{F}$-structures in $\s{F}[U]$ for a species $\s{F}$. Such information is recorded in the coefficients of the infinite series.

\begin{definition}
The \emph{exponential generating series} of a species of structures $\s{F}$ is defined as:
$$ \s{F}(x) = \sum_{n=0}^\infty f_n \frac{x^n}{n!}$$
where $f_n = |\s{F}[n]| = \text{the number of $\s{F}$-structures on a set of n elements}$.
\end{definition}

\begin{example} There are $f_n = n!$ linear orderings on a set of size $n$. Thus, the exponential generating function for the species $\s{L}$ of linear orderings is $\s{L}(x) = \sum_{n=0}^\infty \frac{n!}{n!}x^n =\sum_{n=0}^\infty x^n = \frac{1}{1-x}$.
\end{example}

\subsubsection{Type generating series}
The type generating series allows us to enumerate unlabeled $\s{F}$-structures. This can be seen as computing the number of different ``shapes" that an $\s{F}$-structure of a given size can have, irrespective of labels. In other words, this can be thought of as isomorphism classes of labeled structures under permuting the labels. For instance, whereas there are eight labeled $\s{G}$-structures on three vertices (Figure \ref{3labeled}), there are four unlabeled $\s{G}$-structures---four ``shapes"---on three vertices (Figure \ref{unlabeled}).

	\begin{figure}[h!]
			\[\begin{tikzpicture}
\wvertex (001) at (-4.5,1.9392) {};
\wvertex (002) at (-5.1,0.9) {};
\wvertex (003) at (-3.9,0.9) {};

\wvertex (201) at (1.5,1.9392) {};
\wvertex (202) at (0.9,0.9) {};
\wvertex (203) at (2.1,0.9) {};
\node[draw=none] (h6) at (0.7,1.42) {};
\node[draw=none] (h7) at (2.3,1.42) {};
\path
(201) edge (203);

\wvertex (011) at (-4.5,-0.9) {};
\wvertex (012) at (-5.1,-1.9392) {};
\wvertex (013) at (-3.9,-1.9392) {};
\path
(011) edge (012)
(011) edge (013)
(012) edge (013);

\wvertex (211) at (1.5,-0.9) {};
\wvertex (212) at (0.9,-1.9392) {};
\wvertex (213) at (2.1,-1.9392) {};
\node[draw=none] (h6') at (0.7,-1.42) {};
\node[draw=none] (h7') at (2.3,-1.42) {};
\node[draw=none] (h8') at (3.7,-1.42) {};
\path
(211) edge (212)
(212) edge (213);

\unlabeledthreevertexgraphs

\node[draw=none] (h6) at (0.7,1.42) {};
\node[draw=none] (h7) at (2.3,1.42) {};

\node[draw=none] (h6') at (0.7,-1.42) {};
\node[draw=none] (h7') at (2.3,-1.42) {};
\node[draw=none] (h8') at (3.7,-1.42) {};

\node[draw=none] (h1) at (-1.3,2.2) {};
\node[draw=none] (h5) at (-0.7,1.42) {};

\node[draw=none] (h2) at (4.3,2.2) {};
\node[draw=none] (h8) at (3.7,1.42) {};

\node[draw=none] (h3) at (-1.3,-2.2) {};
\node[draw=none] (h5') at (-0.7,-1.42) {};

\node[draw=none] (h4) at (4.3,-2.2) {};

\draw [rounded corners] (-5.5,2.3) rectangle (-3.5,0.5);
\draw [rounded corners] (-2.5, 2.3) rectangle (5.5, 0.5);
\draw [rounded corners] (-5.5, -0.5) rectangle (-3.5, -2.3);
\draw [rounded corners] (-2.5, -0.5) rectangle (5.5, -2.3);

			\end{tikzpicture}\]
            \caption{The four isomorphism classes of the unlabeled graphs on three vertices.} \label{unlabeled}
	\end{figure}
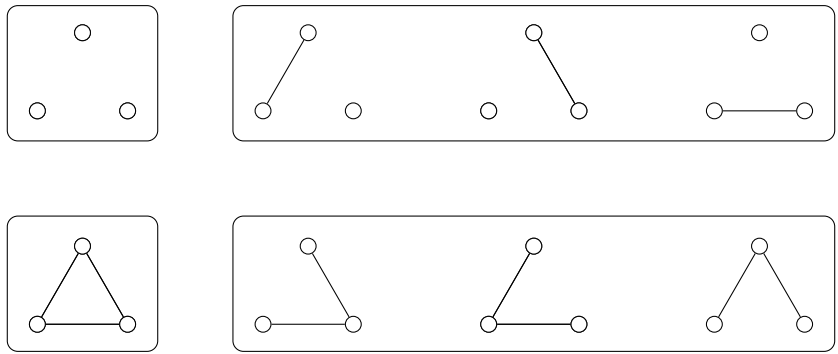

\begin{definition}\label{equiv}
For a set $U$ of the form $U=\{1, 2 \cdots n\} = [n]$, we define the equivalence relation $\sim$ for any pair $s, t \in \s{F}[n]$: 

$$s \sim t \text{  if and only if s and t have the same isomorphism type.}$$
\end{definition}

By Definition \ref{equiv}, an isomorphism type of $\s{F}$-structures of order $n$ is defined as an equivalence class of $\s{F}$-structures on $n$. With this in mind, we define the type generating series:

\begin{definition}
Let $T(\s{F}_n)$ be the quotient set, denoted $\s{F}[n]/\sim$, of isomorphism classes of $\s{F}$-structures of order $n$. The \emph{type generating series} of a species of structures $\s{F}$ is the formal power series:
$$\iso{\s{F}}(x) = \sum_{n=0}^{\infty}\iso{f_n} x^n $$
where $\iso{f_n} = |T(\s{F}_n)|$ is the number of unlabeled $\s{F}$-structures of order $n$.
\end{definition}

\begin{example}Given a set of $n$ elements and the species of permutations, $\s{S}$, we compute the value $|T(\s{S}_n)|$. For $s, t \in \s{S}$ to be equivalent, the lengths of disjoint cycles when decomposing $s$ must match the lengths of the cycles of $t$. So $\iso{f_n}$ is the number of partitions of the number $n$. Then, we can verify that $\iso{\s{S}}(x) = \prod_{k=0}^\infty \frac{1}{1-x^k}$.\end{example}

\subsubsection{Cycle index series}
By permuting the labels and checking which structures are fixed by each permutation, we can get information about the way they relate to each other. In particular, a Burnside's Lemma computation allows us to enumerate unlabeled structures.

The cycle index series is defined as follows.

\begin{definition}
The \emph{cycle index series} of a species of structures $\s{F}$ is the formal power series: 
$$Z_{\s{F}}(p_1, p_2, p_3 \cdots) = \sum_{n=0}^{\infty} \frac{1}{n!} \sum_{\sigma \in S_n} (\fix \s{F}[\sigma]) p_\sigma$$

where $S_n$ denotes the permutation group of $[n]$, $\fix \s{F}[\sigma] = (\s{F}[\sigma])_1$ is the number of $\s{F}$-structures on $[n]$ fixed by $\s{F}[\sigma]$, $p_\sigma$ is the monomial term $p_1^{\sigma_1} p_2^{\sigma_2} p_3^{\sigma_3} \cdots p_n^{\sigma_n}$, and $\sigma_i$ is the number of $i$-cycles of $\sigma$.
\end{definition}

\begin{example}
Consider the species of graphs, $\s{G}$. We can compute how much the graphs on the first few values of $n$ contribute to the overall cycle index series of $\s{G}$.
\begin{itemize}
\item For $n=2$, $\s{G}[\{1,2\}]$ contains only two graphs: the complete graph on two vertices ($K_2$) and its complement ($K_2^c$). So, applying any permutation of $S_2$ to these graphs leaves us with the original graphs. Thus we have:
$$\sum_{\sigma \in S_2} (\fix \s{G}[\sigma]) p_\sigma = 2p_1^2 + 2p_2$$
\item For $n=3$, $\s{G}[\{1,2,3\}]$ contains eight graphs as shown above. 

\begin{itemize}
\item Consider the permutation $\sigma = (123)$. In Figure \ref{123orb}, we see that $\sigma$ fixes two out of the eight structures of $\s{G}[\{1,2,3\}]$. Thus, its term in the cycle index series is $2p_3$.
\item Consider the permutation $\sigma = (12)$. One can see that the permutation fixes four structures while permuting two pairs of structres in $\s{G}[\{1,2,3\}]$. Thus, its term in cycle index series is $4p_1 p_2$. 
\end{itemize}
\end{itemize}
\end{example}

\subsubsection{The cycle index series as a generalization}

The cycle index series of a species is simultaneously a generalization of both the generating series and the type generating series. We can indeed obtain both of these series from the cycle index series by the following theorem, a proof of which is in \cite[Theorem 1.2.8]{1998}.

\begin{theorem}
For any species of structures $\s{F}$:
$$\s{F}(x) = Z_{\s{F}}(x, 0, 0, \cdots)$$
$$\iso{\s{F}}(x) = Z_{\s{F}}(x, x^2, x^3, \cdots) $$
\end{theorem}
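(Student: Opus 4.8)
The plan is to substitute the indicated values into the definition of $Z_{\s{F}}$ and, in each case, identify which terms survive. The structural fact I will lean on throughout is that the functor properties in Definition~\ref{def:spec} make the assignment $\sigma \mapsto \s{F}[\sigma]$ a group homomorphism from $S_n$ into the symmetric group on $\s{F}[n]$, so that $S_n$ acts on the set $\s{F}[n]$.

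For the first identity, I would set $p_1 = x$ and $p_i = 0$ for $i \geq 2$. In the monomial $p_\sigma = p_1^{\sigma_1} p_2^{\sigma_2}\cdots$, this substitution annihilates every term for which some $\sigma_i$ with $i \geq 2$ is nonzero, so only permutations consisting entirely of $1$-cycles---that is, $\sigma = Id_{[n]}$---contribute, each giving $p_\sigma = x^n$. Since $\s{F}[Id_{[n]}] = Id_{\s{F}[n]}$ fixes every structure, we have $\fix \s{F}[Id_{[n]}] = |\s{F}[n]| = f_n$, and the series collapses to $\sum_n \frac{f_n}{n!} x^n = \s{F}(x)$.

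For the second identity, I would set $p_i = x^i$. Then $p_\sigma = \prod_i (x^i)^{\sigma_i} = x^{\sum_i i\sigma_i} = x^n$, since a permutation of $[n]$ distributes all $n$ points among its cycles. Hence
$$Z_{\s{F}}(x, x^2, x^3, \cdots) = \sum_{n=0}^\infty \left( \frac{1}{n!} \sum_{\sigma \in S_n} \fix \s{F}[\sigma] \right) x^n,$$
and the inner coefficient is precisely the average number of fixed points of the $S_n$-action on $\s{F}[n]$. By Burnside's Lemma, this average equals the number of orbits of that action.

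The crux is then to identify these orbits with the isomorphism types of Definition~\ref{equiv}. I would argue that two structures $s, t \in \s{F}[n]$ lie in the same $S_n$-orbit if and only if $t = \s{F}[\sigma](s)$ for some $\sigma \in S_n$, which is exactly the condition that $s$ and $t$ have the same isomorphism type; thus the orbit count is $|T(\s{F}_n)| = \iso{f_n}$, and the series becomes $\sum_n \iso{f_n} x^n = \iso{\s{F}}(x)$. I expect this final identification to be the only genuine subtlety: the rest is bookkeeping about which monomials survive each substitution, whereas matching the group-theoretic orbits to the combinatorially-defined equivalence $\sim$ requires unwinding the definition of isomorphism type in terms of transport along bijections.
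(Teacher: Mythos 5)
Your proof is correct and takes essentially the same approach as the proof the paper cites from \cite[Theorem 1.2.8]{1998}: the first identity by noting that the substitution $p_i = 0$ for $i \ge 2$ kills every permutation except the identity, and the second by collapsing $p_\sigma$ to $x^n$ and applying Burnside's Lemma---exactly the mechanism the paper itself identifies for passing from $Z_{\s{F}}$ to $\iso{\s{F}}(x)$. Your identification of the $S_n$-orbits on $\s{F}[n]$ with the isomorphism types of Definition~\ref{equiv} is the standard and correct way to make that definition precise, since transports along bijections are exactly the isomorphisms of $\s{F}$-structures.
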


The computation of the type generating series from the cycle index series is an application of Burnside's Lemma. For many species, a direct computation of the type generating series is challenging because it is sometimes hard to deal with the structural symmetries of the objects. In some of these cases, we can still enumerate the species by computing the cycle index using the species operations described in the next section. But first, we introduce some examples of species.

\subsubsection{Examples of Basic Species}
\begin{itemize}
\item The \emph{empty species} ($\mathbf{0}$) is defined as the species that puts zero structures on a set $U$. That is, $\mathbf{0}[U]= \emptyset$ for every set $U$.

\item The \emph{empty set species} ($\mathbf{1}$) is defined as the species that puts one structure on the empty set, and puts no structures on anything else. That is, for a set $U$, 
$$\mathbf{1}[U] = \left\{
\begin{array}{lr}
\{U\} & \text{if } U = \emptyset, \\
\emptyset & \text{otherwise.}
     \end{array}
   \right.$$

\item The \emph{singleton species} ($X$) is defined as the species that puts a structure on the set $U$ if $|U| = 1$ and no structure on $U$ otherwise. That is, for a set $U$,
$$X[U] = \left\{
\begin{array}{lr}
\{U\} & \text{if } |U| = 1, \\
\emptyset & \text{otherwise.}
     \end{array}
   \right.$$

\item The \emph{permutation species} ($\s{S}$) is defined as the species whose structures on $U$ are the permutations of $U$. The transports of the permutation species work in the following way: given a transport $\sigma$ and a permutation $\pi$, $\s{S}[\sigma](\pi) = \sigma\pi\sigma^{-1}$.

\item The  \emph{linear ordering species} ($\s{L}$) is defined as the species whose structures on $U$ are the linear orderings on $U$. The transports of the linear orderings species work in the following way: given a transport $\sigma$ and a linear ordering $\{a_1, a_2, \cdots, a_k\}$, $$\s{L}[\sigma](\{a_1, a_2, \cdots, a_k\}) = \{\sigma(a_1), \sigma(a_2), \cdots, \sigma(a_k)\}.$$

\item The \emph{set species} ($\s{E}$) is defined as the species whose only structure on $U$ is the set $U$ itself. That is, $\s{E}[U] = \{U\}$.
\end{itemize}
\begin{tabular}{ccccc}
Species & Symbol & Exp. Gen. Series & Type Gen. Series & Cycle Index Series\\ 
\hline
Empty & $\mathbf{0}$ & 0 & 0 & 0\\
Empty Set & $\mathbf{1}$ & 1 & 1 & 1\\
Singleton & $X$ & $x$ & $x$ & $p_1$\\
Permutations & $\s{S}$ & $\frac{1}{1-x}$ & $\prod_{k=1}^\infty \frac{1}{1-x^k}$ & $\prod_{k=1}^\infty \frac{1}{1-p_k}$\\
Linear Orderings & $\s{L}$ & $\frac{1}{1-x}$ & $\frac{1}{1-x}$ & $\frac{1}{1-p_1}$\\
Set & $\s{E}$ & $e^x$ & $\frac{1}{1-x}$ & $\exp(p_1 + \frac{p_2}{2} + \frac{p_3}{3} + \cdots$)\\

\end{tabular}

\subsection{Combinatorial equality} \label{sec:equality}
Two species of structures $\s{F}$ and $\s{G}$ are said to be combinatorially equal when they are isomorphic, which is defined as follows:

\begin{definition}
A \emph{species isomorphism} from $\s{F}$ to $\s{G}$ is a family of bijections $\alpha_U: \s{F}[U]\rightarrow \s{G}[U]$ which satisifies the naturality condition: For any bijection $\sigma: U \rightarrow V$ and any $\s{F}$-structure $s \in \s{F}[U]$, we have $\s{G}[\sigma] (\alpha_U(s)) = \alpha_V(\s{F}[\sigma](s))$.
\end{definition}

%
%
%
%
For species $\s{F}$ and $\s{G}$, we denote their equality as $\s{F} = \s{G}$. If two species are combinatorially equal, it does not necessarily mean that the species' structure sets are exactly identical. All it means is that their combinatorial properties are the same, including their three associated series. In short, if $\s{F} = \s{G}$, then their structure sets are the same size, and their transports act in structurally the same way. On the other hand, it is not enough to have simply the same number of structures, as we see in the following example.

\begin{example}
Consider the species of linear orderings ($\s{L}$) and the species of permutations ($\s{S}$). Despite having the same exponential generating series, their transports differ. Moreover, their type generating series and cycle index series are not equal. Therefore, $\s{L} \neq \s{S}$.
\end{example}

\subsection{Algebraic operations on species}
To find the cycle index series of a species, it is not always feasible to compute the sum directly. Instead, we may build a species out of other species by defining operations such as addition and multiplication on species. We now make a sequence of precise technical definitions, each followed by explanation and examples that illuminate its underlying meaning. This material is covered by \cite{1998}.

\subsubsection{Addition}
\begin{definition} Let $\s{F}$ and $\s{G}$ be species. Then their \emph{sum} $\s{F}+\s{G}$ is the species where
\[ (\s{F}+\s{G})[U] = \s{F}[U] \sqcup \s{G}[U], \]
where $A \sqcup B$, the disjoint union of $A$ and $B$, is the set of $A$-elements colored red and $B$-elements colored blue, and
\[ (\s{F}+\s{G})[\sigma](s) = \left\{ \begin{array}{ll}
F[\sigma](s) & \mbox{if } s \in \s{F}[U] \\
G[\sigma](s) & \mbox{if } s \in \s{G}[U].
\end{array} \right. \]
\end{definition}
This means that an $\s{F}+\s{G}$-structure is a red $\s{F}$-structure or a blue $\s{G}$-structure. Two red (resp.~blue) $\s{F}+\s{G}$-structures are isomorphic if and only if they are isomorphic as $\s{F}$-structures (resp.~$\s{G}$-structures). It is easily seen that addition of species is associative and commutative.

Species addition behaves nicely with generating series, as seen in this proposition, where the proof can be found in \cite{1998}.
\begin{proposition}
For any species $\s{F}$ and $\s{G}$,
\begin{itemize}
\item $(\s{F} + \s{G})(x) = \s{F}(x) + \s{G}(x)$;
\item $\iso{(\s{F} + \s{G})}(x) = \iso{\s{F}}(x) + \iso{\s{G}}(x)$;
\item $Z_{\s{F}+\s{G}}(p_1, p_2, \ldots) = Z_{\s{F}}(p_1, p_2, \ldots) + Z_{\s{G}}(p_1, p_2, \ldots)$.
\end{itemize}
\end{proposition}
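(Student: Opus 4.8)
The plan is to reduce all three identities to the single structural fact underlying species addition: $(\mathcal{F}+\mathcal{G})[U] = \mathcal{F}[U] \sqcup \mathcal{G}[U]$ is built from a red copy of $\mathcal{F}[U]$ together with a blue copy of $\mathcal{G}[U]$, and the transport $(\mathcal{F}+\mathcal{G})[\sigma]$ acts as $\mathcal{F}[\sigma]$ on the red part and as $\mathcal{G}[\sigma]$ on the blue part, preserving color throughout. The cleanest route is to prove the cycle index identity first and then recover the exponential and type generating series identities from the earlier theorem relating $Z_{\mathcal{F}}$ to $\mathcal{F}(x)$ and $\widetilde{\mathcal{F}}(x)$.

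For the cycle index series, I would fix $n$ and a permutation $\sigma \in S_n$ and count the $(\mathcal{F}+\mathcal{G})$-structures on $[n]$ fixed by $(\mathcal{F}+\mathcal{G})[\sigma]$. Because the transport never changes the color of a structure, such a fixed structure is either a red $\mathcal{F}$-structure fixed by $\mathcal{F}[\sigma]$ or a blue $\mathcal{G}$-structure fixed by $\mathcal{G}[\sigma]$, and these two possibilities are disjoint. Hence
$$\fix (\mathcal{F}+\mathcal{G})[\sigma] = \fix \mathcal{F}[\sigma] + \fix \mathcal{G}[\sigma].$$
Since the monomial $p_\sigma$ depends only on the cycle type of $\sigma$ and is therefore common to both terms, substituting this into the definition of $Z_{\mathcal{F}+\mathcal{G}}$ and splitting the double sum over $\sigma$ yields the third identity directly.

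To obtain the first two identities, I would invoke the earlier theorem that $\mathcal{F}(x) = Z_{\mathcal{F}}(x,0,0,\ldots)$ and $\widetilde{\mathcal{F}}(x) = Z_{\mathcal{F}}(x,x^2,x^3,\ldots)$. Because substitution of power series into the variables $p_i$ is additive and the cycle index identity already holds at the level of formal series, setting $p_1 = x$ and $p_i = 0$ for $i \geq 2$ gives $(\mathcal{F}+\mathcal{G})(x) = \mathcal{F}(x) + \mathcal{G}(x)$, and setting $p_i = x^i$ gives the type generating series identity. Alternatively, each can be proved directly: $|(\mathcal{F}+\mathcal{G})[n]| = |\mathcal{F}[n]| + |\mathcal{G}[n]|$ by additivity of the disjoint union settles the exponential series, while the fact that no red structure is ever isomorphic to a blue one partitions the isomorphism classes of order $n$ into $\mathcal{F}$-classes and $\mathcal{G}$-classes, settling the type series.

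The argument has no serious obstacle; the one point requiring care is the color bookkeeping, namely verifying that the transport of $\mathcal{F}+\mathcal{G}$ genuinely respects the red/blue partition so that both the fixed points and the isomorphism classes split as a clean sum with no cross terms. This is precisely what the definition of species addition guarantees, and it is the hinge on which all three identities turn.
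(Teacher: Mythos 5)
Your proof is correct. The paper does not actually prove this proposition itself---it defers to \cite{1998}---and your argument (the transport of $\s{F}+\s{G}$ preserves the red/blue partition, so $\fix\,(\s{F}+\s{G})[\sigma] = \fix \s{F}[\sigma] + \fix \s{G}[\sigma]$, giving $Z_{\s{F}+\s{G}} = Z_{\s{F}} + Z_{\s{G}}$, with the exponential and type generating series identities then obtained by the specializations $p_i \mapsto 0$ for $i \ge 2$ and $p_i \mapsto x^i$, or directly from the disjoint union) is essentially the standard proof found there.
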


\begin{example} \hspace{1 pc}
\begin{itemize}
\item If $\s{A}$ is the species of trees, $\s{B}$ is the species of forests, and $\s{B}^*$ is the species of disconnected forests, then $\s{B} = \s{A} + \s{B}^*$.

\item Consider the species $\s{G}^C$ (connected graphs) and $\s{B}$ (forests). Then $\s{G}^C + \s{B}$ is the species of structures that are either connected graphs or forests. Since a tree is a connected forest, $\s{G}^C + \s{B}$ includes every tree twice: once in $\s{G}^C$ and once in $\s{B}$. Also, even though they may be identical in structure, none of the $\s{G}^C$-trees are considered isomorphic with the $\s{B}$-trees because they come from different terms in the sum.

\item A simpler example is $\mathbf{1}+X$. Since $\mathbf{1}$ is the species of the empty set and $X$ is the species of singleton sets, $\mathbf{1}+X$ is the species of sets of size at most 1.
\end{itemize}

\end{example}

\subsubsection{Multiplication}
\begin{definition} Let $\s{F}$ and $\s{G}$ be species. Then their \emph{product} $\s{F} \cdot \s{G}$ is the species such that:
\begin{align*} (\s{F} \cdot \s{G})[U] &= \bigcup_{S \subseteq U} \s{F}[S] \times \s{G}[U \setminus S] \\
&= \{(s, t) : \mbox{ there is } S \subseteq U \mbox{ such that } s \in \s{F}[S] \mbox{ and } t \in \s{G}[U \setminus S] \},
\end{align*}
and
\[ (\s{F} \cdot \s{G})[\sigma](s, t) = (\s{F}[\sigma_1](s), \s{G}[\sigma_2](t)), \]
where $\sigma_1$ and $\sigma_2$ are the restrictions of $\sigma$ to the label sets of $s$ and $t$ respectively.
\end{definition}
That is, we form an $(\s{F} \cdot \s{G})$-structure by partitioning the label set into two parts, putting an $\s{F}$-structure on the first part, and putting a $\s{G}$-structure on the second part. Two $(\s{F} \cdot \s{G})$-structures are isomorphic if and only if their $\s{F}$-structure parts are isomorphic and their $\s{G}$-structure parts are isomorphic. In short, an $(\s{F} \cdot \s{G})$-structure is an ordered pair of an $\s{F}$-structure and a $\s{G}$-structure (see Figure \ref{fig:multEx}). It is easily checked that species multiplication is associative and commutative, and that it distributes with species addition.

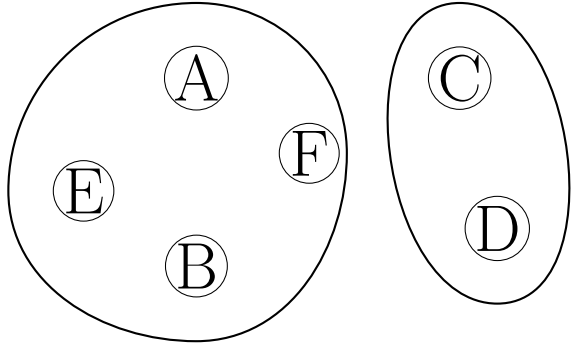
\begin{figure}[h!]
\[\begin{tikzpicture}

\wvertex (0) at (0, 0) {\huge{A}};
\wvertex (1) at (0, -2.5) {\huge{B}};
\wvertex (2) at (3.5, 0) {\huge{C}};
\wvertex (3) at (4, -2) {\huge{D}};
\wvertex (4) at (-1.5, -1.5) {\huge{E}};
\wvertex (5) at (1.5, -1) {\huge{F}};
\node[draw=none] (h) at (0, 2) {};

\put (0, 35) {{\huge{$\mathcal{F}_1$}}};
\put (80, 35) {{\huge{$\mathcal{F}_2$}}};

\draw [thick] (0, -3.5) to [out=180, in=270] (-2.5, -1.5)
to [out=90, in=180] (0,1) to [out=0, in=90] (2, -1)
to [out=270, in=0] (0, -3.5);
\draw [thick] (3.5, 1) to [out=0, in=0] (4, -3) to [out=180, in=180] (3.5, 1);

\end{tikzpicture}\]
\caption{This is a diagram of an arbitrary structure of the species $\s{F}_1\cdot \s{F}_2$. Given the label set $U = \{A,B,C,D,E,F\}$, we partition $U$ into two sets, in this case $\{A,B,E,F\}$ and $\{C,D\}$, and put an $\s{F}_1$ structure on the first set and an $\s{F}_2$ structure on the second set.}
\label{fig:multEx}
\end{figure}

Species multiplication also behaves nicely with generating series, as seen in this proposition. The proof can be found in \cite{1998}.
\begin{proposition}
For any species $\s{F}$ and $\s{G}$,
\begin{itemize}
\item $(\s{F} \cdot \s{G})(x) = \s{F}(x) \, \s{G}(x)$;
\item $\iso{(\s{F} \cdot \s{G})}(x) = \iso{\s{F}}(x) \, \iso{\s{G}}(x)$;
\item $Z_{\s{F} \cdot \s{G}}(p_1, p_2, \ldots) = Z_{\s{F}}(p_1, p_2, \ldots) \, Z_{\s{G}}(p_1, p_2, \ldots)$.
\end{itemize}
\end{proposition}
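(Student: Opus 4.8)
The plan is to prove the cycle index identity $Z_{\s{F}\cdot\s{G}} = Z_{\s{F}} \cdot Z_{\s{G}}$ first, since it is the most refined of the three statements and the other two follow from it by the substitution theorem stated above. The starting point is to understand the fixed points of the transport $(\s{F}\cdot\s{G})[\sigma]$ for a permutation $\sigma \in S_n$. Recall that an $(\s{F}\cdot\s{G})$-structure on $[n]$ is a pair $(s,t)$ with $s \in \s{F}[S]$ and $t \in \s{G}[[n]\setminus S]$ for some $S \subseteq [n]$, and that the transport sends it to $(\s{F}[\sigma|_S](s),\, \s{G}[\sigma|_{S^c}](t))$, a structure on the partition $(\sigma(S), \sigma(S^c))$. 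Hence for $(s,t)$ to be fixed we first need $\sigma(S) = S$ --- that is, $S$ must be a union of cycles of $\sigma$ --- and then we need $s$ fixed by $\s{F}[\sigma|_S]$ and $t$ fixed by $\s{G}[\sigma|_{S^c}]$. This yields
$$ \fix (\s{F}\cdot\s{G})[\sigma] = \sum_{\substack{S \subseteq [n] \\ \sigma(S) = S}} (\fix \s{F}[\sigma|_S])\,(\fix \s{G}[\sigma|_{S^c}]). $$

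Next I would reorganize the double sum defining $Z_{\s{F}\cdot\s{G}}$. The key bookkeeping fact is that $\fix \s{F}[\tau]$ depends only on the cycle type of $\tau$: conjugate permutations yield conjugate transports, via the functor property $\s{F}[\gamma\tau\gamma^{-1}] = \s{F}[\gamma]\,\s{F}[\tau]\,\s{F}[\gamma]^{-1}$, so they fix the same number of structures, and $p_\tau$ likewise depends only on cycle type. Using this, for a fixed size $m = |S|$ the inner sum $\sum_{\tau \in \mathrm{Sym}(S)} (\fix\s{F}[\tau])\,p_\tau$ is independent of the particular set $S$ and equals $\sum_{\alpha \in S_m} (\fix\s{F}[\alpha])\,p_\alpha$. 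Summing over $\sigma$ together with its invariant subsets $S$ is therefore the same as choosing $S$ of size $m$ (in $\binom{n}{m}$ ways) and then choosing the two restrictions $\sigma|_S$ and $\sigma|_{S^c}$ independently, which factors the contribution as a product of an $\s{F}$-sum and a $\s{G}$-sum. Pulling out the factor $\tfrac1{n!}\binom{n}{m} = \tfrac1{m!}\cdot\tfrac1{(n-m)!}$ and summing over $m$ and $n$ reassembles exactly the Cauchy product $Z_{\s{F}}\cdot Z_{\s{G}}$.

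Finally, I would obtain the two generating-series identities as corollaries. The substitutions $p_1 \mapsto x,\ p_i \mapsto 0\ (i \ge 2)$ and $p_i \mapsto x^i$ are ring homomorphisms on the relevant ring of formal power series, so applying them to $Z_{\s{F}\cdot\s{G}} = Z_{\s{F}} \cdot Z_{\s{G}}$ and invoking the theorem relating the three series immediately gives $(\s{F}\cdot\s{G})(x) = \s{F}(x)\,\s{G}(x)$ and $\iso{(\s{F}\cdot\s{G})}(x) = \iso{\s{F}}(x)\,\iso{\s{G}}(x)$. As an independent sanity check, the exponential series identity also follows directly from $|(\s{F}\cdot\s{G})[n]| = \sum_{k}\binom{n}{k} f_k\, g_{n-k}$, the binomial convolution that encodes multiplication of exponential generating series.

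The main obstacle I anticipate is the reorganization in the second paragraph: justifying cleanly that $\fix$ is a class function and that the sum over $(\sigma, S)$-pairs factors, while keeping the $\binom{n}{m}$ and the factorials straight so that they collapse into the Cauchy product. The fixed-point decomposition of the first paragraph is conceptually the crux --- recognizing that a fixed structure forces $\sigma$ to preserve the partition into label sets --- but once stated it is routine; it is the combinatorial bookkeeping turning it into a product of series where care is genuinely needed.
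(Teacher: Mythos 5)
Your proof is correct; note that the paper does not actually prove this proposition but simply cites \cite{1998}, and your argument --- decomposing $\fix\,(\s{F}\cdot\s{G})[\sigma]$ over $\sigma$-invariant subsets $S$, using functoriality to see that $\fix$ is a class function, and collapsing $\tfrac{1}{n!}\binom{n}{m}=\tfrac{1}{m!}\cdot\tfrac{1}{(n-m)!}$ into the Cauchy product --- is essentially the standard proof found in that reference. The derivation of the two generating-series identities by specializing $p_1\mapsto x,\ p_i\mapsto 0$ and $p_i\mapsto x^i$, which are ring homomorphisms, is also the standard route and is sound.
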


\begin{example} \hspace{1 pc}
\begin{itemize}
\item If $\s{E}$ is the set species, then an ($\s{E} \cdot \s{E}$)-structure is just a partition of the label set into two distinguishable parts. Since ``in the second part'' is equivalent to ``not in the first part'', the species $\s{E} \cdot \s{E}$ is equal to $\textsc{Sub}$, the species of subsets of the label set. Then
\[ \textsc{Sub}(x) = (\s{E} \cdot \s{E})(x) = \s{E}(x) \s{E}(x) = (e^x)(e^x) = e^{2x} = \sum_{n=0}^{\infty} \frac{2^n}{n!}. \]
This shows that a set of size $n$ has $2^n$ subsets, without actually having to count them.

\item Let $X$ be the species of singleton sets, $\s{L}$ the species of linear orderings, and $\s{L}_+$ the species of non-empty linear orderings. We can write
\[ \s{L}_+ = X \cdot \s{L}, \]
because every non-empty linear ordering can be partitioned into a first element (a singleton) followed by the rest (a linear ordering). Since every linear ordering is empty or non-empty, $\s{L} = 1 + \s{L}_+$. Therefore,
\[ \s{L} = 1 + X \cdot \s{L}. \]
This relation also holds for the respective cycle-index series:
\[ Z_{\s{L}}(p_1, p_2, \ldots) = 1 + p_1 Z_{\s{L}}(p_1, p_2, \ldots). \]
This allows us to solve for $Z_{\s{L}}(p_1, p_2, \ldots)$, proving that $Z_{\s{L}}(p_1, p_2, \ldots) = \frac{1}{1-p_1}$.

\item Recall that $\mathbf{0}$ is the empty species (which has no structures) and $\mathbf{1}$ is the empty-set species (whose only structure is the empty set). Notice that, for any species $\s{F}$, we have $\s{F} \cdot \mathbf{0} = \mathbf{0}$, because there are no $\mathbf{0}$-structures. We also have $\s{F} \cdot \mathbf{1} = \s{F}$, because the only way to partition the label set between an $\s{F}$-structure and the empty set is to give all of them to the $\s{F}$-structure and none of them to the empty set.
\end{itemize}

\end{example}

\subsubsection{Composition}
\begin{definition} Let $\s{F}$ and $\s{G}$ be species such that $\s{G}[\emptyset] = \emptyset$. We will define a species $\s{F} \circ \s{G}$, called the \emph{composition} (or \emph{substitution}) of $\s{F}$ and $\s{G}$. Its structure set $(\s{F} \circ \s{G})[U]$ is given by:
\[ \{ (s, T) : \mbox{$s \in \s{F}[T]$ and $T$ is a set of $\s{G}$-structures whose label sets partition $U$} \}. \]
Let $(s, T) \in (\s{F} \circ \s{G})[U]$. For each $t \in T$, let $\sigma_t$ be the restriction of $\sigma$ to the label set of $t$. Let $\tau$ be a function from $T$ defined by
\[ \tau(t) = \s{G}[\sigma_t](t). \]
Then the transports of $\s{F} \circ \s{G}$ are:
\[ (\s{F} \circ \s{G})[\sigma](s) = \s{F}[\tau](s). \]
\end{definition}
This means that an $(\s{F} \circ \s{G})$-structure on label set $U$ is formed by arranging the labels into $\s{G}$-structures and arranging the $\s{G}$-structures into an $\s{F}$-structure. The construction of $(\s{F} \circ \s{G})$-structure is illustrated in Figure \ref{fig:compEx}.

\begin{figure}[h!]
\[\begin{tikzpicture}

\wvertex (0) at (0, 0) {W};
\wvertex (1) at (0, -2.5) {O};
\wvertex (2) at (3, -0.5) {L};
\wvertex (3) at (3.5, -1.5) {E};

\wvertex (6) at (1.5, -2) {Y};
\wvertex (7) at (1.5, -4.5) {P};
\wvertex (8) at (3, -3) {T};
\wvertex (9) at (0, -3.5) {M};
\wvertex(10) at (2, -3.5) {X};

\wvertex (4) at (-1, -1.5) {V};
\wvertex (5) at (1.5, -1) {S};

\put (120, 0) {\textcolor{black}{\huge{$\mathcal{F}$}}};

\put (50, 5) {\textcolor{black}{\large{$\mathcal{G}$}}};
\put (-50,-10) {\textcolor{black}{\large{$\mathcal{G}$}}};
\put (-35,-80) {\textcolor{black}{\large{$\mathcal{G}$}}};
\put (90,-120) {\textcolor{black}{\large{$\mathcal{G}$}}};

\draw [ ultra thick] (1, -2) circle(3.5);

\draw [thick] (0, 0.5) to [out=0, in=0] (-1, -2) to [out=180, in=180] (0, 0.5);
\draw [thick] (3, -2.5) to [out=0, in=0] (1.5, -5) to [out=180, in=180] (3, -2.5);
\draw [thick] (0, -2) to [out=0, in=0] (0, -4) to [out=180, in=180] (0, -2);
\draw [thick] (1.5, -2.5) to [out=180, in=225] (1, -1)
to [out=45, in=180] (3, 0) to [out=0, in=45] (4, -1.5)
to [out=225, in=0] (1.5, -2.5);

\end{tikzpicture}\]
\caption{This is a diagram of an arbitrary structure of the species $\s{F}\circ\s{G}$. Given the label set $U = \{W,O,L,V,E,Y,P,T,M,X,V,S\}$, we partition $U$, in this case into the sets $\{W,V\}$, $\{M,O\}$, $\{E,L,S,Y\}$, and $\{P,T,X\}$, put a $\s{G}$ structure on each set and an $\s{F}$ structure on the set of $\s{G}$-stuctures.}
\label{fig:compEx}
\end{figure}
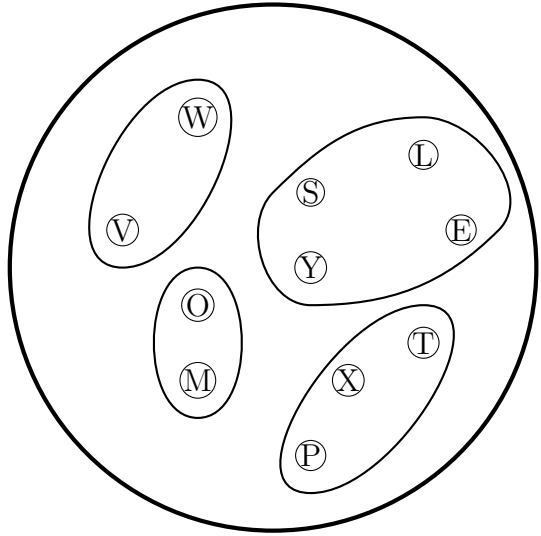

Two $(\s{F} \circ \s{G})$-structures are isomorphic if the two overarching $\s{F}$-structures are isomorphic and the corresponding $\s{G}$-structure parts are isomorphic. The species $\s{G}$ is forbidden from having an empty structure; otherwise, any subdivision of the label set into $\s{G}$-structures would be extendable by adjoining any number of empty $\s{G}$-structures, so $(\s{F} \circ \s{G})$-structures would have arbitrarily many empty $\s{G}$-structures.

Composition of generating series is not as straightforward as their addition and multiplication.
\begin{proposition} \label{prop:sercomp}
For any species $\s{F}$ and $\s{G}$ such that $\s{G}[\emptyset] = \emptyset$,
\[ Z_{\s{F} \circ \s{G}}(p_1, p_2, p_3, \ldots) = Z_{\s{F}}(Z_{\s{G}}(p_1, p_2, p_3, \ldots), \, Z_{\s{G}}(p_2, p_4, p_6, \ldots), \, Z_{\s{G}}(p_3, p_6, p_9), \ldots). \]
A proof of this theorem is given in \cite[Theorem 1.4.2]{1998}. This yields the following two generating series:
\begin{itemize}
\item $(\s{F} \circ \s{G})(x) = \s{F}(\s{G}(x))$;
\item $\iso{(\s{F} \circ \s{G})}(x) = Z_{\s{F}}\left(\iso{\s{G}}(x), \, \iso{\s{G}}(x^2), \, \iso{\s{G}}(x^3), \ldots\right)$.
\end{itemize}
\end{proposition}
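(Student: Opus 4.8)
The plan is to prove the cycle index formula directly, since the two generating-series identities then follow from it by specialization. Throughout I read the right-hand side as the plethystic substitution of $Z_{\s{G}}$ into $Z_{\s{F}}$: the variable $p_k$ of $Z_{\s{F}}$ is replaced by $Z_{\s{G}}(p_k, p_{2k}, p_{3k}, \ldots)$. By the definition of the cycle index series it suffices to fix $n$ and a permutation $\sigma \in S_n$, compute $\fix(\s{F}\circ\s{G})[\sigma]$ (the number of composite structures on $[n]$ fixed by $\sigma$), and then show that $\frac1{n!}\sum_{\sigma \in S_n} \fix(\s{F}\circ\s{G})[\sigma]\, p_\sigma$ reassembles into the claimed substitution.

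First I would unravel what it means for $\sigma$ to fix a structure $(s,T)$, where $T$ is a set of $\s{G}$-structures whose label sets partition $[n]$ and $s \in \s{F}[T]$. Since $\sigma$ preserves the structure, it must permute the blocks (the label sets of the members of $T$), inducing a permutation $\bar\sigma$ of $T$; being fixed then forces $\s{F}[\bar\sigma](s) = s$ together with the requirement that $\sigma$ carry the $\s{G}$-structure on each block to the $\s{G}$-structure on its image block. Next I would analyze a single cycle $(t_1\,t_2\,\cdots\,t_\ell)$ of $\bar\sigma$: all of its blocks must share a common size $m$, the $\s{G}$-structures on $t_2,\dots,t_\ell$ are determined by transport from the one on $t_1$, and that remaining $\s{G}$-structure must be fixed by $\rho := \sigma^{\ell}$ restricted to the block $B_{t_1}$. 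The crucial bookkeeping observation is that each $i$-cycle of $\rho$ on $B_{t_1}$ corresponds to an $(i\ell)$-cycle of $\sigma$ on the $\ell m$ labels of the orbit; hence the portion of $p_\sigma$ coming from this orbit is exactly $p_\rho$ with every $p_i$ replaced by $p_{i\ell}$.

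From here I would assemble the answer in two layers. Summing the contribution of a single $\bar\sigma$-cycle of length $\ell$ over all block sizes $m$ and all $\s{G}$-structures on the representative block fixed by $\rho$, weighted by $1/m!$ and by the orbit-portion of $p_\sigma$, yields precisely $\sum_m \frac1{m!}\sum_{\rho\in S_m}(\fix\s{G}[\rho])\,(p_\rho \text{ with } p_i\mapsto p_{i\ell}) = Z_{\s{G}}(p_\ell, p_{2\ell}, p_{3\ell}, \ldots)$. Because distinct $\bar\sigma$-cycles act on disjoint orbits, their contributions multiply, so summing over the top-level permutation $\bar\sigma$ weighted by $\fix\s{F}[\bar\sigma]$ and $1/q!$ (with $q=|T|$) is by definition $Z_{\s{F}}$ evaluated with each $p_\ell$ replaced by $Z_{\s{G}}(p_\ell, p_{2\ell},\ldots)$, which is the claimed formula. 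The two corollaries then follow by specialization: setting $p_1 = x$ and $p_k = 0$ for $k\ge 2$ uses $\s{G}[\emptyset]=\emptyset$ (so $\s{G}(0)=0$) to annihilate every higher argument and gives $(\s{F}\circ\s{G})(x)=\s{F}(\s{G}(x))$, while setting $p_k = x^k$ turns each argument $Z_{\s{G}}(p_k,p_{2k},\ldots)$ into $\iso{\s{G}}(x^k)$.

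The step I expect to be the main obstacle is the normalization bookkeeping that turns the single global average $\frac1{n!}\sum_{\sigma\in S_n}$ into the product of a top-level average $\frac1{q!}\sum_{\bar\sigma}$ with the per-orbit averages $\frac1{m!}\sum_{\rho}$ hidden inside each $Z_{\s{G}}$ factor. Making this rigorous requires carefully counting how many actual pairs $(\sigma,(s,T))$ on $[n]$ lift a given datum of top-level permutation, block sizes, and representative $\s{G}$-structures, and checking that the overcounting factors cancel the factorials exactly — this is the same multinomial averaging argument that underlies species multiplication, now applied cycle by cycle. Once this factorization of the average is justified, the remainder is the formal identification of the resulting expression with the plethystic substitution.
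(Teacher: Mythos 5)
Your proposal is correct in outline, but there is no in-paper proof to compare it against: the paper proves nothing here and simply defers to \cite[Theorem 1.4.2]{1998}, so what you have written is essentially a reconstruction of that classical argument. Your structural analysis of a fixed point $(s,T)$ of $(\s{F}\circ\s{G})[\sigma]$ is exactly right: $\sigma$ permutes the blocks; all blocks in an $\ell$-cycle of the induced permutation $\bar\sigma$ share a common size; transport determines the $\s{G}$-structures on all but a representative block; the representative structure must be fixed by $\rho=\sigma^{\ell}$ restricted to its block; and each $i$-cycle of $\rho$ inflates to an $(i\ell)$-cycle of $\sigma$, which is precisely why the $k$th argument fed to $Z_{\s{F}}$ is $Z_{\s{G}}(p_k,p_{2k},p_{3k},\ldots)$. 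The two specializations are also handled correctly, including the role of $\s{G}[\emptyset]=\emptyset$ in killing all arguments beyond the first when $p_k=0$ for $k\ge 2$. The only unexecuted step is the one you honestly flag, the factorization of the global average $\frac{1}{n!}\sum_{\sigma\in S_n}$; the standard way to discharge it is to regroup by cycle type rather than cancel factorials by hand: since the number of $\sigma\in S_n$ of type $\mu$ is $n!/z_\mu$ with $z_\mu=\prod_i i^{m_i}m_i!$, each cycle index takes the form $\sum_\mu \fix(\cdot)\,p_\mu/z_\mu$, and one checks that when the type of $\sigma$ is assembled from the type of $\bar\sigma$ and the types of the representatives $\rho$, the symmetry factors $z_\mu$ multiply exactly as the plethystic substitution's denominators demand. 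With that regrouping made explicit, your sketch closes into a complete proof of the statement the paper outsources.
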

The exponential series is literally a composition of the functions. However, it is significant that the expression for the type generating series requires usage of the cycle index series. This is largely why the cycle index series is so useful in enumerating unlabeled structures, as we will see in sections 4 and 5.

\begin{example}
Recall that $\s{E}$ is the set species. For any species $\s{F}$, an $(\s{E} \circ \s{F})$-structure is a set of $\s{F}$-structures. This is particularly meaningful when $\s{F}$ is a species of connected structures. For instance:
\begin{itemize}
\item If $\s{G}$ is the species of graphs and $\s{G}^C$ is the species of connected graphs, then $\s{G} = \s{E} \circ \s{G}^C$.
\item If $\s{S}$ is the species of permutations and $\s{C}$ is the species of (non-empty) cycles, then $\s{S} = \s{E} \circ \s{C}$.
\item If $\s{A}$ is the species of rooted trees, then $\s{A} = X \cdot (\s{E} \circ \s{A})$. A rooted tree can be thought of as a singleton, the root, and a set of rooted trees emanating from the singleton. The root of each of these smaller trees is the vertex in the smaller tree adjacent to the singleton (See Figure \ref{fig:trees}).

\begin{figure}[h!]
\[\begin{tikzpicture}

\node [draw = none] at (0, 4.25) {$X$};
\node [draw = none] at (1.75, 4.25) {\huge{$\cdot$}};
\node [draw = none] at (3.5, 4.25) {$\mathcal{E} \circ \mathcal{A}$};

\wvertex (1) at (0, 0) {D};
\wvertex (2) at (2, 2.25) {R};
\wvertex (3) at (2, 0.75) {A};
\wvertex (4) at (2, -0.75) {G};
\wvertex (5) at (2, -2.25) {O};
\path
(1) edge (2)
(1) edge (3)
(1) edge (4)
(1) edge (5);

\wvertex (6) at (3, 2.25) {N};
\wvertex (7) at (4.5, 3) {E};
\wvertex (8) at (4.5, 2.25) {S};
\wvertex (9) at (4.5, 1.5) {B};
\path
(2) edge (6)
(6) edge (7)
(6) edge (8)
(6) edge (9);

\wvertex (10) at (3, 1.25) {H};
\wvertex (11) at (3, 0.25) {P};
\path
(3) edge (10)
(3) edge (11);

\wvertex (12) at (3, -0.75) {F};
\wvertex (13) at (4, -0.25) {T};
\wvertex (14) at (4, -1.25) {C};
\wvertex (15) at (5, -1.25) {Q};
\path
(4) edge (12)
(12) edge (13)
(12) edge (14)
(14) edge (15);

\draw [thick] (0, 0) circle(0.3);

\draw  (1.5, 2.25) to [out=270, in=180] (3, 1.85)
to [out=0, in=180] (4.5, 1)
to [out=0, in=0] (4.5, 3.5)
to [out=180, in=0] (3, 2.65)
to [out=180, in=90] (1.5, 2.25);

\draw  (1.5, 0.75) to [out=270, in=180] (3, -0.15)
to [out=0, in=0] (3, 1.65)
to [out=180, in=90] (1.5, 0.75);

\draw  (1.5, -0.75) to [out=90, in=180] (3, -0.35)
to [out=0, in=180] (4, 0.25)
to [out=0, in=90] (5.5, -1.25)
to [out=270, in=0] (4, -1.75)
to [out=180, in=0] (3, -1.15)
to [out=180, in=270] (1.5, -0.75);

\draw  (2, -2.25) circle(0.5);

\draw [thick, rounded corners] (1.25, -3) rectangle (5.75, 3.75);

\end{tikzpicture}\]

\caption{An example of the relation $\s{A} = X\cdot\s{E}(\s{A})$. Here the vertex labeled $D$ is singleton and the root, and it is adjacent to $R$, $A$, $G$, and $O$, the roots of the smaller rooted trees.}
\label{fig:trees}
\end{figure}
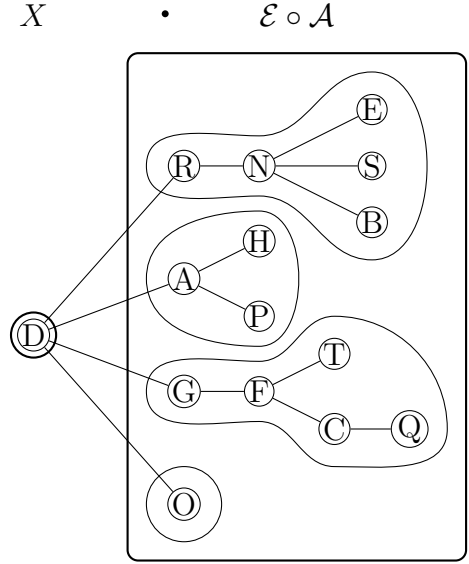

\item Define the species $\textsc{End}$ of endofunctions; that is, $\textsc{End}[U]$ is the set of functions from $U$ to $U$, and
\[ \textsc{End}[\sigma](f) = \sigma f \sigma^{-1}, \]
which is $f$ conjugated by $\sigma$. If $\s{A}$ is the species of rooted trees and $\s{S}$ is the species of permutations, then
\[ \textsc{End} = \s{S} \circ \s{A}. \] To see this, let $G$ be the directed graph (a graph where every edge points towards one of its vertices and we can have up to two edges for each pair of vertices, up to one pointing each way) with vertex set $U$ and an edge from $u$ to $v$ if $f(u)=v$. The reader may note that the function taking $f$ to $G$ is injective. Some elements of $U$ are periodic, and so these create cycles in $G$. The rest of the elements eventually land on a periodic element, and this corresponds to a path in $G$ leading into the vertex of the periodic element. Thus, as seen in Figure \ref{fig:ScircA}, $G$ corresponds to a set of cycles, which we have seen is a permutation, where each vertex is the root of the tree flowing into it. Thus, we have the equation \[\textsc{End} = \s{S}\circ\s{A}.\]

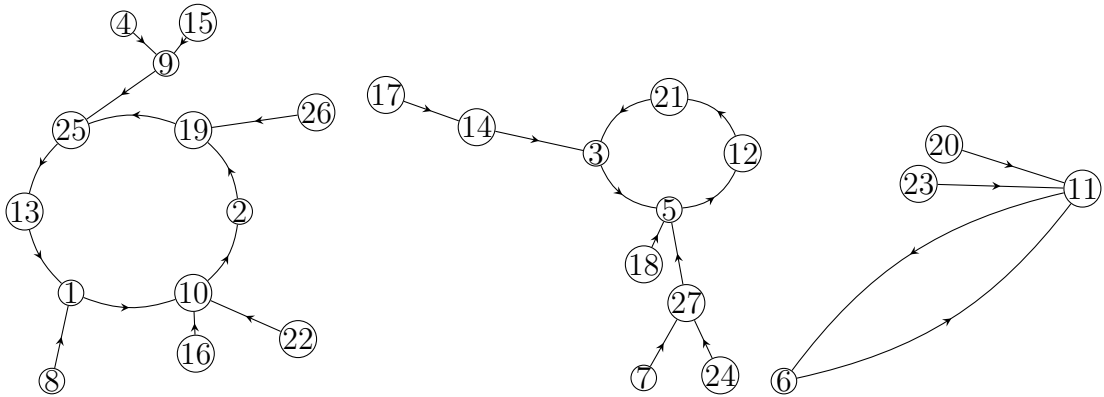
\begin{figure}[h!]
\begin{minipage}[b]{0.3\linewidth}
\[\begin{tikzpicture}[x=1.3cm, y=1cm,
    every edge/.style={
        draw,
        postaction={decorate,
                    decoration={markings,mark=at position 0.5 with {\arrow{>}}}
                   }
        }
]
	\node[draw=none] at (0:0) {};
	\wvertex (1) at (60:1.25) {19};
	\wvertex (2) at (120:1.25) {25};
	\wvertex (3) at (180:1.1) {13};
	\wvertex (4) at (240:1.25) {1};
	\wvertex (5) at (300:1.25) {10};
	\wvertex (6) at (360:1.1) {2};

	\wvertex (9) at (80:2) {9};
	\wvertex (10) at (92:2.5) {4};
	\wvertex (11) at (75:2.6) {15};
	\wvertex (12) at (35:2.3) {26};
	\wvertex (13) at (250:2.4) {8};
	\wvertex (14) at (315:2.4) {22};
	\wvertex (15) at (289:2) {16};

	\path
		(1) edge [>=stealth,bend right=20] (2)
		(2) edge [>=stealth,bend right=15] (3)
		(3) edge [>=stealth,bend right=15] (4)
		(4) edge [>=stealth,bend right=20] (5)
		(5) edge [>=stealth,bend right=20] (6)
		(6) edge [>=stealth,bend right=20] (1)
		
		(9) edge [>=stealth] (2)
		(10) edge [>=stealth](9)
		(11) edge [>=stealth](9)
		(13) edge [>=stealth] (4)
		(12) edge [>=stealth] (1)
		(14) edge [>=stealth] (5)
		(15) edge [>=stealth] (5)
		
	;
\end{tikzpicture}\]
\end{minipage}
\begin{minipage}[b]{0.3\linewidth}
\[\begin{tikzpicture}[x=1.3cm, y=1cm,
    every edge/.style={
        draw,
        postaction={decorate,
                    decoration={markings,mark=at position 0.5 with {\arrow{>}}}
                   }
        }
]
	\node[draw=none] at (0:0) {};
	\wvertex (a) at (0:.75) {12};
	\wvertex (b) at (90:.75) {21};
	\wvertex (c) at (180:.75) {3};
	\wvertex (d) at (270:.75) {5};

	\wvertex (e) at (260:1.5) {18};
	\wvertex (f) at (275:2) {27};
	\wvertex (g) at (280:3) {24};
	\wvertex (h) at (265:3) {7};

	\wvertex (i) at (170:2) {14};
	\wvertex (j) at (165:3) {17};

	\path
		(a) edge [>=stealth, bend right=30] (b)
		(b) edge [>=stealth, bend right=30] (c)
		(c) edge [>=stealth, bend right=30] (d)
		(d) edge [>=stealth, bend right=30] (a)
		
		(h) edge [>=stealth] (f)
		(g) edge [>=stealth] (f)
		(f) edge [>=stealth] (d)
		(e) edge [>=stealth] (d)

		(i) edge [>=stealth] (c)
		(j) edge [>=stealth] (i)
		;
\end{tikzpicture}\]
\end{minipage}
\begin{minipage}[b]{0.3\linewidth}
\[\begin{tikzpicture}[x=1.3cm, y=1cm,
    every edge/.style={
        draw,
        postaction={decorate,
                    decoration={markings,mark=at position 0.5 with {\arrow{>}}}
                   }
        }
]
\node[draw=none] at (0:0) {};
\wvertex (1) at (215:2) {6};
\wvertex (2) at (45:2) {11};

\wvertex (3) at (90:2) {20};
\wvertex (4) at (100:1.5) {23};

\path
(1) edge[>=stealth, bend right=20] (2)
(2) edge [>=stealth, bend right=20] (1)
(3) edge [>=stealth] (2)
(4) edge [>=stealth] (2)
;
\end{tikzpicture}\]
\end{minipage}
\caption{The graph $G$ corresponding to the function $f$ where $f(2)=f(26)=19, f(19)=f(9)=25, f(4)=f(15)=9 f(25)=13, f(13)=f(8)=1, f(1)=f(16)=f(22)=10, f(10)=2, f(3)=f(18)=f(27)=5, f(7)=f(24)=27, f(5)=12, f(12)=21, f(21)=f(14)=3, f(17)=14, f(11)=f(20)=f(23)=6,$ and $f(6)=11$. Note that this construction can be understood as a permutation of rooted trees.}
\label{fig:ScircA} 
\end{figure}

\end{itemize}
\end{example}

We also have the following algebraic facts about composition, the proofs of which follow when considered at the cycle index level:
\begin{proposition}For any species $\s{F}$, $\s{G}$, $\s{H}$ where $\s{H}[\emptyset] = \emptyset$,
\begin{itemize}
\item $(\s{F} + \s{G}) \circ \s{H} = (\s{F} \circ \s{H}) + (\s{G} \circ \s{H})$ (right-distributivity over addition);
\item $(\s{F} \cdot \s{G}) \circ \s{H} = (\s{F} \circ \s{H}) \cdot (\s{G} \circ \s{H})$ (right-distributivity over multiplication);
\item $\s{F} \circ X = \s{F}$ and $X \circ \s{H} = \s{H}$ (identity).
\end{itemize}
\end{proposition}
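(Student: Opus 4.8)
The plan is to prove each of the three identities as a genuine combinatorial equality by exhibiting, for every finite label set $U$, a bijection $\alpha_U$ between the structure sets of the two sides and checking that the family $\{\alpha_U\}$ satisfies the naturality condition of Section~\ref{sec:equality}. In each case the underlying set-level bijection is essentially forced by the definition of composition, so the real content lies in verifying that it commutes with the transports.

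For right-distributivity over addition, I would unwind the definitions: an $((\s{F}+\s{G})\circ\s{H})$-structure on $U$ is a pair $(s,T)$ where $T$ is a set of $\s{H}$-structures whose label sets partition $U$ and $s$ is an $(\s{F}+\s{G})$-structure on $T$. By the definition of $\s{F}+\s{G}$, the structure $s$ is either a red $\s{F}$-structure on $T$ or a blue $\s{G}$-structure on $T$, and these two cases are exactly an $(\s{F}\circ\s{H})$-structure and a $(\s{G}\circ\s{H})$-structure on $U$; this colour-preserving correspondence is $\alpha_U$. For right-distributivity over multiplication, an $((\s{F}\cdot\s{G})\circ\s{H})$-structure is a pair $(s,T)$ whose $s$ is an $(\s{F}\cdot\s{G})$-structure on $T$, so $s$ splits $T$ as a disjoint union $T_1 \sqcup T_2$ carrying an $\s{F}$-structure and a $\s{G}$-structure. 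Letting $U_1$ and $U_2$ be the unions of the label sets of the $\s{H}$-structures in $T_1$ and $T_2$, we obtain $U = U_1 \sqcup U_2$, an $(\s{F}\circ\s{H})$-structure on $U_1$, and a $(\s{G}\circ\s{H})$-structure on $U_2$, i.e.\ an $((\s{F}\circ\s{H})\cdot(\s{G}\circ\s{H}))$-structure. For the identities, note that an $X$-structure exists only on a singleton: in $\s{F}\circ X$ the partition $T$ must consist of singletons in canonical bijection with $U$, giving $\s{F}\circ X = \s{F}$; and in $X\circ\s{H}$ the outer $X$-structure forces $T$ to be a single $\s{H}$-structure whose label set is all of $U$, giving $X\circ\s{H}=\s{H}$.

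I expect the naturality check to be the main obstacle, especially in the multiplicative case: I must verify that for any bijection $\sigma\colon U\to V$ the square relating $\alpha_U$, $\alpha_V$, and the two transports commutes. This requires tracing $\sigma$ through the composition transport, recalling that $(\s{F}\circ\s{H})[\sigma]$ acts via the induced map $\tau$ on the set of $\s{H}$-structures, and confirming that the induced splitting $U = U_1\sqcup U_2$ is carried to $V = \sigma(U_1)\sqcup\sigma(U_2)$ so that the restrictions $\sigma|_{U_1}$ and $\sigma|_{U_2}$ match the product transport. Once naturality is confirmed, all three identities hold as combinatorial equalities. As a consistency check, and as the quickest route if one only needs equality of the associated series, the same relations can be read off at the cycle-index level from Proposition~\ref{prop:sercomp}: substituting $Z_{\s{F}+\s{G}} = Z_{\s{F}} + Z_{\s{G}}$ and $Z_{\s{F}\cdot\s{G}} = Z_{\s{F}}\,Z_{\s{G}}$ into the composition plethysm, and using $Z_X = p_1$, immediately yields the claimed formulas.
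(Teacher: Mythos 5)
Your proof is correct, but it takes a genuinely different route from the paper's. The paper disposes of this proposition in a single clause: the identities are said to ``follow when considered at the cycle index level,'' i.e.\ by substituting $Z_{\s{F}+\s{G}} = Z_{\s{F}} + Z_{\s{G}}$, $Z_{\s{F}\cdot\s{G}} = Z_{\s{F}}\,Z_{\s{G}}$, and $Z_X = p_1$ into the plethysm formula of Proposition~\ref{prop:sercomp} --- exactly the computation you relegate to a closing consistency check. Your main argument instead builds explicit structure-level bijections $\alpha_U$ and verifies naturality, and this buys strictly more than the paper's remark: the proposition asserts combinatorial equality in the sense of Section~\ref{sec:equality}, i.e.\ a natural family of bijections, and equality of cycle index series does not in general imply species isomorphism (it is a known fact that non-isomorphic species can share a cycle index series, so the cycle-index computation, read literally, establishes only that the two sides have the same associated series). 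Your set-level correspondences are the right ones in all three cases --- the colour-preserving split for addition, the induced decomposition $T = T_1 \sqcup T_2$ with $U_i$ the union of the label sets of the $\s{H}$-structures in $T_i$ for multiplication (where the hypothesis $\s{H}[\emptyset]=\emptyset$ guarantees the blocks are nonempty and the partition of each $U_i$ is recoverable), and the canonical bijection between $U$ and the set of singleton blocks for $\s{F}\circ X$ --- and the naturality squares you flag as the main obstacle do commute by routine unwinding of the composition transport, since the induced map $\tau$ on $T$ respects both the red/blue colouring and the splitting $T_1 \sqcup T_2$, carrying $U = U_1 \sqcup U_2$ to $V = \sigma(U_1) \sqcup \sigma(U_2)$ as you describe. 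In short: your argument is the full bijective proof in the style of Bergeron--Labelle--Leroux, while the paper settles for the weaker but quicker series-level verification; both are sound for the paper's enumerative purposes, but yours actually proves the proposition as stated.
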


\subsubsection{Differentiation and pointing}
Next we define the operation of differentiation. Differentiation is most important in that it allows us to root, or \emph{point}, a species. For instance, the species $\s{A}$ of rooted trees can be expressed in terms of the species $a$ of unrooted trees by the formula $\s{A} = X\cdot a'$, as we will see below.

\begin{definition}
Given a species $\s{F}$, define its \emph{derivative} $\s{F}'$ to be the species such that, if $* \not\in U$, then
\[ \s{F}'[U] = \s{F}[U \cup \{ * \}] \]
and
\[ \s{F}'[\sigma](s) = \s{F}[\sigma^+](s), \]
where $\sigma^+(*) = *$ and $\sigma^+(x) = \sigma(x)$ if $x \in U$.
\end{definition}
Thus, we take a derivative by adding a star to the label set and requiring that the star remain fixed by isomorphisms.
This definition ensures that we have the equality $|\s{F}'[n]| = |\s{F}[n+1]|$ for all $n$.

The following proposition explains the name \emph{derivative}. The proof of is left to the reader.
\begin{proposition}
For any species $\s{F}$,
\[ Z_{\s{F}'}(p_1, p_2, p_3, \ldots) = \frac{\partial}{\partial p_1} Z_{\s{F}}(p_1, p_2, p_3, \ldots). \]
This yields the following two generating series:
\begin{itemize}
\item $\s{F}'(x) = \frac{d}{dx} \s{F}(x)$;
\item $\iso{\s{F}'}(x) = \left(\frac{\partial}{\partial p_1}Z_{\s{F}}\right)(x, x^2, x^3, \ldots)$.
\end{itemize}
\end{proposition}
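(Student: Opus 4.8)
The plan is to prove the cycle-index identity directly from the definition of $Z$ and then obtain the two generating-series statements as easy corollaries. Starting from
$$Z_{\s{F}}(p_1, p_2, \ldots) = \sum_{m=0}^{\infty} \frac{1}{m!} \sum_{\sigma \in S_m} (\fix \s{F}[\sigma]) \, p_\sigma,$$
I would differentiate term by term. Since $p_\sigma = p_1^{\sigma_1} p_2^{\sigma_2}\cdots$, we get $\frac{\partial}{\partial p_1} p_\sigma = \sigma_1 \, p_1^{\sigma_1 - 1} p_2^{\sigma_2}\cdots$, where $\sigma_1$ is the number of fixed points of $\sigma$. The two features of this expression are exactly what drive the proof: the coefficient $\sigma_1$ counts the fixed points $j$ of $\sigma$, and the surviving monomial $p_1^{\sigma_1-1}p_2^{\sigma_2}\cdots$ is precisely the cycle-type monomial of the permutation obtained from $\sigma$ by deleting a single fixed point. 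I would record this as $\frac{\partial}{\partial p_1}p_\sigma = \sum_{j:\sigma(j)=j} p_{\sigma|_{[m]\setminus\{j\}}}$.

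Next I would reindex. Writing $m = n+1$ and interchanging the order of summation, $\frac{\partial}{\partial p_1}Z_{\s{F}}$ becomes a sum over pairs $(\sigma, j)$ with $\sigma\in S_{n+1}$ and $j$ a fixed point of $\sigma$:
$$\frac{\partial}{\partial p_1}Z_{\s{F}} = \sum_{n=0}^{\infty}\frac{1}{(n+1)!}\sum_{j=1}^{n+1}\sum_{\substack{\sigma\in S_{n+1}\\ \sigma(j)=j}} (\fix\s{F}[\sigma])\, p_{\sigma|_{[n+1]\setminus\{j\}}}.$$
For a fixed $j$, the permutations of $[n+1]$ fixing $j$ are in bijection with the permutations $\tau$ of the $n$-element set $[n+1]\setminus\{j\}$, and identifying $j$ with the star label $*$ turns $\sigma$ into exactly the extension $\tau^+$ from the definition of $\s{F}'$; hence $\fix\s{F}[\sigma] = \fix\s{F}'[\tau]$ and $p_{\sigma|_{[n+1]\setminus\{j\}}} = p_\tau$. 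Each of the $n+1$ values of $j$ therefore contributes the identical inner sum $\sum_{\tau}(\fix\s{F}'[\tau])\,p_\tau$, so the factor $n+1$ cancels $\tfrac{1}{(n+1)!}$ down to $\tfrac{1}{n!}$, and the whole expression collapses to $\sum_n \frac{1}{n!}\sum_{\tau\in S_n}(\fix\s{F}'[\tau])\,p_\tau = Z_{\s{F}'}$, as desired.

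The step I expect to require the most care is the identification $\fix\s{F}[\sigma] = \fix\s{F}'[\tau]$ after relabeling $[n+1]\setminus\{j\}$ to $[n]$. This rests on isomorphism invariance of fixed-point counts: the functor axioms in Definition~\ref{def:spec} give $\s{F}[\rho\sigma\rho^{-1}] = \s{F}[\rho]\,\s{F}[\sigma]\,\s{F}[\rho]^{-1}$, so conjugate permutations fix equally many $\s{F}$-structures and $\fix\s{F}[\sigma]$ depends only on the cycle type of $\sigma$; this legitimizes the relabeling and confirms that the contributions for different $j$ genuinely agree. Finally, the two displayed generating-series identities follow from the cycle-index formula by substitution using the earlier theorem relating $Z_{\s{F}}$ to $\s{F}(x)$ and $\iso{\s{F}}(x)$: the type series is the direct specialization $p_i \mapsto x^i$, while the exponential series identity $\s{F}'(x) = \frac{d}{dx}\s{F}(x)$ follows from $\s{F}(x)=Z_{\s{F}}(x,0,0,\ldots)$ by a one-line chain-rule computation, since only the $p_1$-derivative survives once $p_2=p_3=\cdots=0$.
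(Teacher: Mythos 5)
The paper gives no proof of this proposition --- it is explicitly ``left to the reader'' --- so there is no argument to compare against; your proof is correct and is exactly the standard computation the authors intend. Every delicate point is handled: you interpret $\sigma_1 p_1^{\sigma_1-1}p_2^{\sigma_2}\cdots$ as a sum over deleted fixed points, reindex via the bijection between pairs $(\sigma, j)$ with $\sigma(j)=j$ and pairs $(\tau, j)$ with $\tau \in S_n$ (identifying $j$ with $*$), correctly justify $\fix \s{F}[\sigma] = \fix \s{F}'[\tau]$ by conjugation invariance from the functor axioms, cancel the factor $n+1$ against $\tfrac{1}{(n+1)!}$, and derive the two series identities properly --- in particular, your treatment of the type generating series as the specialization $p_i \mapsto x^i$ of $\frac{\partial}{\partial p_1}Z_{\s{F}}$ rightly avoids the false shortcut $\iso{\s{F}'}(x) = \frac{d}{dx}\iso{\s{F}}(x)$, which is the one trap this proposition sets.
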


\begin{example}
This following example is accredited to \cite{1998}:
Recall that $\s{L}$ denotes the species of linear orderings. We denote the species of cyclic orderings by $\s{C}$. The generating series for $\s{C}$ can be calculated using species differentiation. It turns out that the generating series of the derivative of cyclic orderings is $\s{C}'(x) = \s{L}(x) = \frac{1}{1-x}$. That is, a $\s{C}'$-structure on a set $U$ is a $\s{C}$-structure on the set $U\cup\{*\}$. Thus, we naturally imagine the derivative of a cylic ordering as a linear ordering, forgetting *, on the set $U$ (depicted in Figure \ref{ceql}).

\begin{figure}[h!]
		\begin{minipage}[b]{0.475\linewidth}
			\[\begin{tikzpicture}
            [x=1cm, y=1cm,
    every edge/.style={
        draw,
        postaction={decorate,
                    decoration={markings,mark=at position 0.5 with {\arrow{>}}}
                   }
        }
]
\node[draw=none] at (0:0) {};
\wvertex (1) at (45:1.5) {1};
\wvertex (2) at (0:1.5) {2};
\wvertex (7) at (-45:1.5) {7};
\wvertex (4) at (-90:1.5) {4};
\wvertex (6) at (-135:1.5) {6};
\wvertex (3) at (-180:1.5) {3};
\wvertex (5) at (135:1.5) {5};
\path
 (1) edge [>=stealth,bend left=20] (2)
 (2) edge[>=stealth,bend left=20] (7)
 (7) edge[>=stealth,bend left=20] (4)
 (4) edge[>=stealth,bend left=20] (6)
 (6) edge[>=stealth,bend left=20] (3)
 (3) edge[>=stealth,bend left=20] (5);
			\end{tikzpicture}\]

		\end{minipage}
		\begin{minipage}[b]{0.05\linewidth}
			\[\begin{tikzpicture}
\put(-10,40){\huge{$\displaystyle = $}}
			\end{tikzpicture}\]

		\end{minipage}
		\begin{minipage}[b]{0.475\linewidth}
			\[\begin{tikzpicture}
                        [x=1cm, y=1cm,
    every edge/.style={
        draw,
        postaction={decorate,
                    decoration={markings,mark=at position 0.5 with {\arrow{>}}}
                   }
        }
]
\node[draw=none] at (0:0) {};
\wvertex (1) at (45:1.5) {1};
\wvertex (2) at (0:1.5) {2};
\wvertex (7) at (-45:1.5) {7};
\wvertex (4) at (-90:1.5) {4};
\wvertex (6) at (-135:1.5) {6};
\wvertex (3) at (-180:1.5) {3};
\wvertex (5) at (135:1.5) {5};
\node[draw=none] (*) at (90:1.5) {\huge{*}};

\path
(1) edge[>=stealth, bend left=20] (2)
(2) edge[>=stealth, bend left=20] (7)
(7) edge[>=stealth, bend left=20] (4)
(4) edge[>=stealth, bend left=20] (6)
(6) edge[>=stealth, bend left=20] (3)
(3) edge[>=stealth, bend left=20] (5)
(5) edge[>=stealth, bend left=20] (*)
(*) edge[>=stealth, bend left=20] (1);

			\end{tikzpicture}\]
		\end{minipage}
\caption{$\s{L} = \s{C}'$}\label{ceql}
	\end{figure}
    
Therefore, in terms of its generating series, $$\s{C}(x) = \int_0^x \frac{dx}{1-x} = \log\frac{1}{1-x}.$$
\end{example}

Now we define the operation of pointing. The effect of pointing is to take an unrooted structure and turn it into a rooted structure.

\begin{definition} Given a species $\mathcal{F}$, the species of \emph{pointed $\mathcal{F}$-structures} is the species $X\cdot \mathcal{F}'$.\end{definition}

\begin{example} If $\mathcal{A}$ is the species of rooted trees, and $a$ is the species of unrooted trees, then $a'$ is the species of unrooted trees where one vertex is replaced by a $*$. Thus, in an $X\cdot a'$-structure, we can connect the singleton vertex to the $*$, and interpret this as removing the $*$ and connecting the singleton to the vertices that were connected to the $*$. The result is a rooted tree, so we have the equation $\s{A} = X\cdot a'$.\end{example}

\subsection{Virtual species}

Once we have defined the addition and multiplication operators on species, we may ask whether we can define their inverses. We thus define the concept of a virtual species as a subtraction of two species. This definition is essentially the same as the definition of $\Z$ in terms of $\N$, and allows us to define division.

Addition, multiplication, and composition work on virtual species as they do on regular species. Therefore, instead of worrying about the technical definitions, it would benefit the reader to understand that these definitions represent an extension of the usual species operations onto virtual species.

\begin{definition} Let $\s{F}$ and $\s{G}$ be species. Then we define the \emph{virtual species} $\s{F}-\s{G}$ as the element $(\s{F},\s{G})$ in $\{(\s{A},\s{B}) : \s{A}\text{ and } \s{B} \text{ are species }\}/\sim$, where $\sim$ is the equivalence relation \[(\s{A},\s{B})\sim (\s{C},\s{D}) \iff \s{A}+\s{D} \equiv \s{B}+\s{C}.\] The \emph{additive inverse} of $\s{F}$, denoted $-\s{F}$, is given as $-\s{F} = \mathbf{0} - \s{F}$.
\end{definition}

Note that in the case that $\s{B}$ is a subspecies of $\s{A}$, $\s{A}-\s{B}$ may be thought of as the ordinary species of $\s{A}$ structures that are not $\s{B}$ structures.

The set of virtual species is a commutative ring under addition and multiplication.

Using our new rule for subtraction, we can define a multiplicative inverse for most species. Given a species $\s{F}$ with one structure on the empty set, we define \[\frac{\mathbf{1}}{\s{F}} = \frac{\mathbf{1}}{\mathbf{1}+\s{F}_+} = \sum_{n=0}^\infty (-1)^n (\s{F}_+^n).\] Note that we can only make this definition if the family $\s{F}_+^n$ is summable---that is, if for any set $U$, $\s{F}_+^n = \emptyset$ for all $n$ greater than some $N$.

We can also define composition on virtual species. The technical definition \cite[p.~127]{1998} is that, given virtual species $\Phi$ and $\Psi$ with $\Psi = \s{H}-\s{K}$ and $\Psi[\emptyset] = \emptyset$, \[\Phi\circ\Psi = \Phi(X+Y) \times (E(X)E^{-1}(Y))|_{X:=H, Y:=K}.\]

\subsubsection{The species logarithm \texorpdfstring{$\Omega$}{Omega}}

Let $\Omega$ be the compositional inverse of $\s{E}_+$; that is, $\Omega$ is the virtual species that satisfies the equation \[\s{E}_+\circ \Omega = \Omega\circ \s{E}_+ = X.\] It is shown in \cite{1998} that such a species $\Omega$ exists and is unique, and \cite{LaBelle} provides insight into how its cycle index series can be calculated.

Given any species $\s{A}$ with $\s{A}[\emptyset] = \emptyset$, the species $\s{B} = \Omega\circ \s{A}$ is known as the \emph{combinatorial logarithm} of $\s{A}$. Note that this means that $\s{A} = \s{E}_+\circ \s{B}$.

Often $\Omega\circ \s{A}$ can be thought of as connected $\s{A}$-structures. For instance, the species $\Omega\circ \s{G}$ is the species of connected graphs, $\s{G}^C$. This is true because arbitrary graphs are disjoint collections of connected graphs. This gives us the equation $\s{G} = \s{E} \circ \s{G}^C$, so by left-composing $\Omega$, we have $\s{G}^C = \Omega\circ \s{G}$.

In some cases, there is no concept of connected $A$-structures, and for some species, like linear orders, the species $\Omega\circ \s{L}$ is strictly virtual (has negative terms). However, it is important to notice that there are many species $\s{F}$ for which $\Omega \circ \s{F}$ is positive (has no negative terms), such as the species of graphs $\s{G}$ as shown above.

The reason for the name ``combinatorial logarithm'' comes from the effect that composition with $\Omega$ has on the associated series of a species. For instance if $\s{F}(x)$ is the exponential generating series for $\s{F}$, then \cite[131]{1998} \[(\Omega\circ \s{F})(x) = \log \s{F}(x).\] Similar but more complicated formulas hold for the type generating series and the cycle index series.

\subsection{\texorpdfstring{$\Gamma$}{Gamma}-species and quotient species}
Up until now, we have seen several species operations and have introduced the concept of virtual species and the combinatorial logarithm (work accredited to \cite{1998}). We utilize these tools to build other species out of species we already know how to enumerate. However, these methods do not encompass all that we can do to find other species. Some species have structures that are best described as orbits of another species' structures under some group action (quotient species). But, before getting to the orbits themselves, we describe how a group can act on the structures ($\Gamma$-species). The following concepts of $\Gamma$-species and quotient species can be found in \cite{dissertation} and \cite{Andrew_Paper}.

\subsubsection{\texorpdfstring{$\Gamma$}{Gamma}-species}

So far we have been applying actions on the labels of structures. But, what if we wanted to apply actions, specifically group actions, on the structures themselves? This brings us to the definition of $\Gamma$-species.

\begin{definition}
Let $\Gamma$ be a group. Then a {\emph{$\Gamma$-species}} $\s{F}$ is a combinatorial species $\s{F}$ equipped with a group action of $\Gamma$ on $\s{F}$-structures that respects isomorphisms between $\s{F}$-structures.
\end{definition}

\begin{example} \label{ex:graphexample} Consider $\s{G}$, the species of graphs. Let the group $S_2$ act on $\s{G}$-structures (that is, graphs) by sending each graph $G$ to itself via the identity and sending $G$ to its complement $G^C$ via the group element $(12)$. It is easily checked that this group action respects graph isomorphisms; thus, $\s{G}$ with this action of $S_2$ is an $S_2$-species.
\end{example}

To complete our introduction to $\Gamma$-species, we define the $\Gamma$--cycle index series.

\begin{definition}
For a group $\Gamma$ and a $\Gamma$-species $\s{F}$, the \emph{cycle index series of $\s{F}$} is defined as
$$Z_{\s{F}}^{\Gamma}(\gamma)(p_1,p_2,p_3,\cdots) = \sum_{n=0}^{\infty}\frac{1}{n!}\sum_{\sigma\in S_n}\fix(\gamma\cdot \s{F}[\sigma])p_{\sigma}$$

where $p_\sigma = p_1^{\sigma_1}p_2^{\sigma_2}p_3^{\sigma_3}\cdots p_n^{\sigma_n}$ and $\gamma \in \Gamma$.

\end{definition}

This analog to the cycle index series for species takes an additional paramter, $\gamma$, which is necessary to include since equipping the group action to an $\s{F}$-structure may alter whether or not it stays fixed by our permutation. Similar to the cycle index series for species, this cycle index series counts the $\s{F}$ structures according to the permutation data, but now also with respect to the group actions.

Note that if $\s{F}$ is a $\Gamma$-species, the ordinary cycle index of $\s{F}$ can be found by evaluating the $\Gamma$ cycle index series at the identity element of $\Gamma$. 

Like the algebraic operations we use on ordinary species, we define similar operations on the cycle index series of $\Gamma$-species. This allows us to build even more species using $\Gamma$-species.

\noindent {\bf Addition:}
For two $\Gamma$-species $\s{F}$ and $\s{G}$, the $\Gamma$--cycle index for their sum is
$$Z_{\s{F}+\s{G}}^{\Gamma}(\gamma) = Z_{\s{F}}^{\Gamma}(\gamma) + Z_{\s{G}}^{\Gamma}(\gamma).$$

\noindent {\bf Multiplication:} For two $\Gamma$-species $\s{F}$ and $\s{G}$, the $\Gamma$--cycle index for their product is
$$Z_{\s{F}\cdot \s{G}}^{\Gamma}(\gamma) = Z_{\s{F}}^{\Gamma}(\gamma) \cdot Z_{\s{G}}^{\Gamma}(\gamma).$$

Now, before defining the $\Gamma$--cycle index for two $\Gamma$-species' composition, we need to define the following terms.
\begin{definition} \label{def:gammaComposition}
The \emph{composition} for two $\Gamma$-species $\s{F}$ and $\s{G}$ is the $\Gamma$-species $\s{F}\circ \s{G}$ defined as
$$(\s{F}\circ \s{G}) [A] = \prod_{\pi\in P(A)}\left(\s{F}[\pi]\times \prod_{B\in\pi}\s{G}[B]\right),$$ where $P(A)$ is the set of partitions of $A$ and where $\gamma\in\Gamma$ acts on an $(\s{F}\circ \s{G})$-structure by acting on the $\s{F}$-structure and the $\s{G}$-structures independently.
\end{definition}
\begin{definition} \label{def:gammaPlethysm}
The \emph{plethysm} of two $\Gamma$--cycle indices $f$ and $g$, $f\circ g$, is $$(f\circ g) (\gamma) = f(\gamma)(g(\gamma)(p_1, p_2, p_3, ...), g(\gamma^2)(p_2, p_4, p_6, ...), ...),$$ a $\Gamma$--cycle index.
\end{definition}

\noindent {\bf Composition:} For two $\Gamma$-species $\s{F}$ and $\s{G}$ where $\s{G}[\emptyset] = \emptyset$, the $\Gamma$--cycle index for their composition is $$Z_{\s{F}\circ \s{G}}^{\Gamma}(\gamma) = Z_{\s{F}}^{\Gamma}(\gamma) \circ Z_{\s{G}}^{\Gamma}(\gamma).$$

\subsubsection{Quotient species}
We now introduce quotient species, which is a way to get an ordinary species from a $\Gamma$-species. Conviently, passing a $\Gamma$-species through its quotient allows us to discover and enumerate several species. For instance, suppose we wanted to count the species of pairs of complementary graphs. On the surface, this process seems difficult to control. However, we may pass the $\Gamma$-species from Example \ref{ex:graphexample} through its quotient to get and then enumerate our desired species. The following definitions allow us to enact such a process.

\begin{definition}
Let $\Gamma$ be a group and $\s{F}$ be a $\Gamma$-species. We say the \emph{$\Gamma$-orbit of an $\s{F}$-structure} $s$ is the orbit of $s$ under the group action of $\Gamma$.
\end{definition}

\begin{definition}
Let $\s{F}$ be a $\Gamma$-species with a group action given. We define $\s{F}/\Gamma$, the \emph{quotient species} of $\s{F}$ under the group action of $\Gamma$, to be the species of $\Gamma$-orbits on $\s{F}$-structures.
\end{definition}

A quotient species is in fact a species, since it is a rule that satisfies the required conditions discussed in definition \ref{def:spec}. A more rigorous approach as to why quotient species is a species can be found in \cite{Bousquet}.

So returning to Example \ref{ex:graphexample}, by passing the $S_2$-species $\s{G}$ through its quotient, we put a given graph $G$ and its complement $G^C$ into an equivalence class. Thus, we have effectively constructed the species of pairs of complements as $\s{G}/S_2$. Using technology, we can enumerate this species via its cycle index series.

\begin{theorem} \label{thm:quotspeciescis}
For a $\Gamma$-species $\s{F}$, the \emph{ordinary cycle index series of the quotient species $\s{F}/\Gamma$} is $$Z_{\s{F}/\Gamma} = \frac{1}{|\Gamma|}\sum_{\gamma\in\Gamma}Z_{\s{F}}^{\Gamma}(\gamma) = \frac{1}{|\Gamma|} \sum_{n=0}^{\infty} \frac{1}{n!} \sum_{\sigma\in S_n}\sum_{\gamma\in\Gamma} \fix (\gamma\cdot \s{F}[\sigma])p_{\sigma}.$$
\end{theorem}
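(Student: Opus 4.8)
The plan is to reduce the theorem, via the definition of the ordinary cycle index series, to a single orbit-counting identity and then prove that identity by a Burnside-style double count. Since $Z_{\s{F}/\Gamma}(p_1,p_2,\ldots) = \sum_{n\ge 0}\frac{1}{n!}\sum_{\sigma\in S_n}\bigl(\fix(\s{F}/\Gamma)[\sigma]\bigr)p_\sigma$ by the definition of the cycle index series of the ordinary species $\s{F}/\Gamma$, and since the right-hand side of the theorem equals $\sum_{n\ge 0}\frac{1}{n!}\sum_{\sigma\in S_n}\bigl(\frac{1}{|\Gamma|}\sum_{\gamma\in\Gamma}\fix(\gamma\cdot\s{F}[\sigma])\bigr)p_\sigma$ (the expression $\frac{1}{|\Gamma|}\sum_\gamma Z_\s{F}^\Gamma(\gamma)$ being the same sum with the order of summation rearranged), it suffices to prove, for each fixed $n$ and each $\sigma\in S_n$, the pointwise identity
\[ \fix(\s{F}/\Gamma)[\sigma] \;=\; \frac{1}{|\Gamma|}\sum_{\gamma\in\Gamma}\fix(\gamma\cdot\s{F}[\sigma]). \]

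First I would fix the combinatorial setup. Write $X=\s{F}[n]$ and $\tau=\s{F}[\sigma]$, a permutation of $X$. The $\Gamma$-species structure gives an action of $\Gamma$ on $X$, and the requirement that this action \emph{respect isomorphisms} is precisely the statement that $\gamma\cdot\tau(s)=\tau(\gamma\cdot s)$ for all $\gamma\in\Gamma$ and $s\in X$; that is, the $\Gamma$-action and the transport $\tau$ commute. This is what makes the induced map $(\s{F}/\Gamma)[\sigma]$ on the set $X/\Gamma$ of $\Gamma$-orbits well defined, sending the orbit $\Gamma\cdot s$ to $\Gamma\cdot\tau(s)$. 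Consequently $\fix(\s{F}/\Gamma)[\sigma]$ equals the number of $\Gamma$-orbits $O\subseteq X$ that are setwise fixed by $\tau$, i.e.\ with $\tau(O)=O$, and the whole problem is reduced to counting $\tau$-invariant orbits.

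The heart of the argument is a double count of the set $P=\{(s,\gamma)\in X\times\Gamma : \gamma\cdot\tau(s)=s\}$. Summing first over $\gamma$ gives $|P|=\sum_{\gamma\in\Gamma}\fix(\gamma\cdot\s{F}[\sigma])$, since $\gamma\cdot\s{F}[\sigma]$ is exactly the permutation $s\mapsto\gamma\cdot\tau(s)$. Summing instead over $s$, I would group the structures according to their $\Gamma$-orbit $O$. Because $\tau$ commutes with $\Gamma$, the image $\tau(\Gamma\cdot s)=\Gamma\cdot\tau(s)$ is again a single orbit, so the equation $\gamma\cdot\tau(s)=s$ is solvable in $\gamma$ exactly when $\tau(s)$ lies in the orbit of $s$, which happens for every $s\in O$ precisely when $\tau(O)=O$. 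When solvable, the solution set is a coset of $\mathrm{Stab}_\Gamma(\tau(s))$, hence has size $|\mathrm{Stab}_\Gamma(\tau(s))|=|\Gamma|/|O|$ by orbit--stabilizer. Summing this over the $|O|$ elements of a $\tau$-invariant orbit contributes exactly $|\Gamma|$, while non-invariant orbits contribute $0$; therefore $|P|=|\Gamma|\cdot\#\{\tau\text{-invariant orbits}\}$. Equating the two counts of $|P|$ yields the pointwise identity above, and substituting it back into the cycle index series completes the proof.

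I expect the main obstacle to be the orbit-by-orbit bookkeeping in the double count. The subtlety is that $\tau$ need not fix a $\tau$-invariant orbit pointwise---it merely permutes the elements within it---so one cannot simply read off a constant number of contributing $\gamma$ per structure without invoking orbit--stabilizer to see that $|\mathrm{Stab}_\Gamma(\tau(s))|$ depends only on the orbit size. Getting this accounting exactly right, together with carefully justifying that the two commuting actions make $(\s{F}/\Gamma)[\sigma]$ well defined, is where the real content lies; once the pointwise identity is established, assembling the series is purely formal.
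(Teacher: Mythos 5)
Your proof is correct and takes exactly the route the paper intends: the paper omits the argument, citing Gainer-Dewar's dissertation and remarking only that ``this formula comes from Burnside's Lemma.'' Your reduction to the pointwise identity $\fix(\s{F}/\Gamma)[\sigma] = \frac{1}{|\Gamma|}\sum_{\gamma\in\Gamma}\fix(\gamma\cdot\s{F}[\sigma])$, proved by double-counting pairs $(s,\gamma)$ with $\gamma\cdot\s{F}[\sigma](s)=s$ and using that the $\Gamma$-action commutes with transports (which also makes the quotient transport well defined), is precisely the Burnside-style argument in the cited source, with the orbit--stabilizer bookkeeping carried out correctly.
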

A proof of \ref{thm:quotspeciescis} can be found in \cite{dissertation}. Note that $Z_{\s{F}\Gamma}$ is the average over the group elements $\gamma\in\Gamma$ of $Z_{\s{F}}^\Gamma(\gamma)$. This formula comes from Burnside's Lemma.

\section{Enumerating point-determining bipartite graphs}

The enumeration of point-determining bipartite graphs appears to be absent from the literature. With the use of species theory, and especially the tool of $\Gamma$-species, this problem is relatively quick to solve. We use this enumeration as an example of the power of these tools, as well as an example of how to use Sage \cite{Sage} in species calculations found in Appendix \ref{appB}.

We start with the assumption that we can compute the cycle index series for $\s{BC}$ and $\Omega$ since the former is done in 	\cite{dissertation} and the latter is done in \cite{LaBelle}.

\begin{definition} \label{def:pd} Given a graph $G$, recall that the neighborhood of a vertex $v\in V(G)$ is the set of vertices to which $v$ is connected by an edge. A \emph{point-determining} graph is a graph where no two vertices share a neighborhood.\end{definition}

\begin{example} The complete graph on $n$ vertices, $K_n$, is point-determining since the neighborhood of each vertex is all of the vertices except itself. Any graph with two or more isolated vertices is not point-determining since each isolated vertex has an empty neighborhood.\end{example}

Because the number of ways to bicolor a bipartite graph depends on the number of connected components of the graph, we will pass to connected graphs on the way to our enumeration, which will allow us to perform a quotient operation. At the end of the example, we will pass back to not-necessarily-connected graphs for the final result.

We will refer to the species of point-determining graphs as $\s{P}$, the species of bipartite graphs as $\s{BP}$, the species of bicolored graphs as $\s{BC}$, the species of connected graphs as $\s{C}$, and any combination of these properties as the concatenation of these symbols. For instance, the species of connected point-determining bipartite graphs we will refer to as $\s{CPBP}$.

We start our computations by noticing a relationship between point-determining graphs and arbitrary graphs. We phrase the lemma in terms of bipartite graphs, but the result holds for many classes of graphs, including the class of all graphs (See Section \ref{sec:pdPhi}).

\begin{lemma} \label{lem:firstlemma} $\s{PBP} = \s{BP}\circ\Omega$.\end{lemma}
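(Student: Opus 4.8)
The plan is to prove the equivalent identity $\s{BP} = \s{PBP}\circ\s{E}_+$ and then right-compose with $\Omega$. Since $\s{E}_+[\emptyset]=\emptyset$, the composition $\s{PBP}\circ\s{E}_+$ is defined, and an $(\s{PBP}\circ\s{E}_+)$-structure on a label set $U$ consists of a partition of $U$ into nonempty blocks together with a point-determining bipartite graph whose vertices are those blocks. I would exhibit an explicit species isomorphism between these structures and bipartite graphs on $U$, given by the ``blow-up/collapse'' construction analogous to the standard point-determining case.

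In the forward direction (collapse), given a bipartite graph $G$ on $U$, I would define the equivalence relation $x\sim y \iff N_G(x)=N_G(y)$, take the blocks to be the $\sim$-classes, and take the quotient graph $H$ in which two classes are adjacent exactly when $G$ has an edge between a vertex of one and a vertex of the other. Three things must be verified. First, $G$ is recovered as the blow-up of $H$ along the partition: each class is an independent set (if $N_G(x)=N_G(y)$ then $x\notin N_G(x)=N_G(y)$, so $x,y$ are non-adjacent), and adjacency between two distinct classes is all-or-nothing (a short computation using $N_G(x')=N_G(x)$ and $N_G(y')=N_G(y)$ shows $x'$ is adjacent to $y'$ if and only if $x$ is adjacent to $y$). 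Second, $H$ is bipartite, because choosing one representative per class realizes $H$ as an induced subgraph of the bipartite graph $G$, and induced subgraphs of bipartite graphs are bipartite. Third, $H$ is point-determining, which is the crux.

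For the point-determining claim I would argue that if two distinct classes $[x],[y]$ had equal neighborhoods in $H$, then they cannot be adjacent in $H$ (otherwise $[y]\in N_H([x])=N_H([y])$, a forbidden self-loop), and hence $N_G(x)=\bigcup_{[w]\in N_H([x])}[w]=\bigcup_{[w]\in N_H([y])}[w]=N_G(y)$, forcing $[x]=[y]$, a contradiction. The reverse map (blow-up) sends a partition together with a point-determining bipartite graph $H$ on the blocks to the graph on $U$ in which each block is an independent set and all of one block is joined to all of another exactly when the corresponding vertices are adjacent in $H$; this graph is bipartite because a proper $2$-coloring of $H$ lifts to one of the blow-up. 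I would then check that the two maps are mutually inverse and that both commute with transport of structures along bijections (naturality), so that together they form a species isomorphism; this step is routine, since every part of the construction is defined canonically from the combinatorial data and is independent of the labels.

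Finally, with $\s{BP}=\s{PBP}\circ\s{E}_+$ established, I right-compose with the species logarithm $\Omega$ and use associativity of composition, the defining relation $\s{E}_+\circ\Omega=X$, and $\s{F}\circ X=\s{F}$ to obtain $\s{BP}\circ\Omega=(\s{PBP}\circ\s{E}_+)\circ\Omega=\s{PBP}\circ(\s{E}_+\circ\Omega)=\s{PBP}\circ X=\s{PBP}$, which is the claim. The main obstacle lies entirely in establishing $\s{BP}=\s{PBP}\circ\s{E}_+$---specifically in verifying that the neighborhood-collapse of a bipartite graph is again point-determining and that the collapse and blow-up constructions are genuinely inverse natural transformations; the passage through $\Omega$ at the end is then purely formal.
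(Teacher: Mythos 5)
Your proposal is correct and follows essentially the same route as the paper: establish $\s{BP} = \s{PBP}\circ\s{E}_+$ via the neighborhood-collapse/blow-up correspondence and then right-compose with $\Omega$. The only difference is one of rigor, not strategy---you verify in detail (independence of classes, all-or-nothing adjacency, bipartiteness and point-determination of the quotient, naturality, and the formal $\s{E}_+\circ\Omega = X$ step) what the paper's proof asserts informally.
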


\begin{proof} Consider a bipartite graph $G$. This graph (potentially) has multiple vertices with the same neighborhood. So, to construct a bipartite graph from $G$, we need to take equivalence classes of vertices with the same neighborhood. To look at it from the other direction, consider a point-determining bipartite graph $P$. Every vertex in $P$ has a unique neighborhood, but an arbitrary bipartite graph $G$ can have many vertices sharing the same neighborhood. Therefore, each vertex in $P$ corresponds to a nonempty set of vertices in $G$, each of which has the same neighborhood as the original vertex. This clearly respects transports, so $\s{BP} = \s{PBP}\circ \s{E}_+$. By right-composing $\Omega$ we get the desired result.\end{proof}

Next, we perform a quotient operation.

\begin{lemma} \label{lem:secondlemma} $\s{CBP} = \s{CBC}/S_2$.\end{lemma}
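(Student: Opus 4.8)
The plan is to realize $\s{CBC}$ as an $S_2$-species via the color-swapping action and then to exhibit a natural bijection between the $S_2$-orbits of connected bicolored graphs and the connected bipartite graphs. First I would pin down the group action: let the nonidentity element of $S_2$ act on a connected bicolored graph by exchanging the two colors (red $\leftrightarrow$ blue), leaving the underlying graph and its labels untouched. (Note this is the color-swap action, not the complementation action of Example \ref{ex:graphexample}.) I would check that this is an action and that it respects bicolored-graph isomorphisms, which is immediate since relabeling vertices and recoloring them commute; thus $\s{CBC}$ is genuinely an $S_2$-species and the quotient $\s{CBC}/S_2$ is defined.

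Next I would define the candidate isomorphism $\alpha_U \colon (\s{CBC}/S_2)[U] \to \s{CBP}[U]$ sending an $S_2$-orbit of connected bicolored graphs on $U$ to its underlying uncolored graph. This is well defined on orbits because swapping colors does not change the edge set, and the output is connected and bipartite since a properly bicolored graph is by definition bipartite. Naturality, i.e.\ the condition $\s{CBP}[\sigma] \circ \alpha_U = \alpha_V \circ (\s{CBC}/S_2)[\sigma]$ from the definition of species isomorphism, follows because transporting labels along $\sigma$ and then forgetting the coloring yields the same graph as forgetting the coloring and then transporting.

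The heart of the argument is that $\alpha_U$ is a bijection, and this is exactly where connectedness is used. Surjectivity is the definition of bipartite: any connected bipartite graph $G$ on $U$ admits at least one proper bicoloring, giving a preimage orbit. For injectivity I would invoke Proposition \ref{thm:bicoloredways} and the remark following it: a connected bipartite graph has exactly two proper bicolorings, and these are interchanged by swapping the colors. Hence any two connected bicolored graphs with the same underlying graph lie in a single $S_2$-orbit, so distinct orbits have distinct underlying graphs. For $U \neq \emptyset$ the two bicolorings are genuinely distinct, so each orbit has size exactly two; for $U = \emptyset$ both species are empty, since there is no connected structure on the empty label set, and the claim holds trivially.

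The main obstacle is precisely this injectivity step, since it is where the hypothesis of connectedness enters and where the ``exactly two bicolorings, related by a global color-swap'' fact is needed. For a disconnected bipartite graph with $k$ components the analogous count would be $2^k$ bicolorings, whose orbits under the color-swap $S_2$ are not all of size two, so the quotient would fail to recover bipartite graphs. This is exactly why the lemma is stated for connected graphs and why the overall enumeration routes through connected species before returning to the general case.
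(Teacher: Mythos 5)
Your proposal is correct and follows essentially the same route as the paper's proof: equip $\s{CBC}$ with the color-flipping $S_2$-action, map each orbit to its underlying uncolored graph, and use Proposition \ref{thm:bicoloredways} (with $k=1$, giving exactly two proper bicolorings of a connected bipartite graph) to see that each orbit corresponds to exactly one connected bipartite graph. Your write-up is in fact more careful than the paper's, spelling out well-definedness, naturality, and the empty-set case that the paper compresses into ``this clearly respects transports.''
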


\begin{proof} Let the action of $S_2$ on $\s{CBC}$ be the color-flipping operation, which flips the color of each vertex to the opposite color. Note that this action on a bicolored graph produces another bicolored graph, and that the action is an involution. For any $\s{CBC}$-graph $G$, let $H$ be the image of $G$ under the $S_2$ action, and note that by removing the colors from $G$ and $H$, we get two copies of the same bipartite graph $P$. By Proposition \ref{thm:bicoloredways}, $P$ has exactly 2 proper bicolorings, so the $S_2$-orbit of $G$ is exactly the set of bicolored graphs that produce $P$ when we remove the colors. Thus, we can associate $P$ with the $S_2$-orbit of $G$. This clearly respects transports, so the result holds.\end{proof}

Now, we can use these lemmas to find an equation for $\s{PBP}$ in terms of the known species $\s{E}$, $\Omega$, and $\s{BP}$.

\begin{lemma} \label{lem:pbp}
$\s{PBP} = (\s{E}\circ ((\Omega\circ \s{BC}_+)/S_2))\circ \Omega$.\end{lemma}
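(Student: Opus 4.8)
The plan is to chain together the three building blocks already established and stitch them in the right order. The target is an expression for $\s{PBP}$, so I would start from Lemma \ref{lem:firstlemma}, which gives $\s{PBP} = \s{BP}\circ\Omega$. This reduces the problem to finding a usable expression for $\s{BP}$ (bipartite graphs) in terms of the known species $\s{E}$, $\Omega$, and $\s{BC}$. Once I have $\s{BP}$ in hand, I substitute it in and simplify. So the real content is establishing that $\s{BP} = \s{E}\circ\left((\Omega\circ\s{BC}_+)/S_2\right)$.

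To get that expression for $\s{BP}$, I would first pass to connected structures. A bipartite graph is a set (via $\s{E}$) of connected bipartite graphs, i.e.\ $\s{BP} = \s{E}\circ\s{CBP}$, since a graph decomposes uniquely into its connected components and being bipartite is a component-wise property. Next I apply Lemma \ref{lem:secondlemma}, which identifies connected bipartite graphs with the quotient $\s{CBP} = \s{CBC}/S_2$ under the color-flipping action. The last ingredient is to recognize $\s{CBC}$ as the combinatorial logarithm of $\s{BC}$: connected bicolored graphs arise as $\Omega\circ\s{BC}_+$, exactly as the excerpt notes that $\s{G}^C = \Omega\circ\s{G}$ for ordinary graphs. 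The one subtlety here is that color-flipping is an $S_2$-action defined on the whole $\Gamma$-species $\s{BC}$, and I must check that it descends through the combinatorial-logarithm (connectedness) operation so that the quotient commutes with $\Omega\circ(-)$; that is, that $\s{CBC}/S_2 = (\Omega\circ\s{BC}_+)/S_2$. Assembling these, $\s{CBP} = (\Omega\circ\s{BC}_+)/S_2$, and hence $\s{BP} = \s{E}\circ\left((\Omega\circ\s{BC}_+)/S_2\right)$.

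Substituting this into $\s{PBP} = \s{BP}\circ\Omega$ gives
\[
\s{PBP} = \left(\s{E}\circ\left((\Omega\circ\s{BC}_+)/S_2\right)\right)\circ\Omega,
\]
which is exactly the claimed identity.

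The step I expect to be the main obstacle is the interchange of the quotient operation with the combinatorial logarithm, i.e.\ justifying that the $S_2$ color-flipping action on $\s{BC}$ restricts compatibly to the connected part $\Omega\circ\s{BC}_+$ and that taking $S_2$-orbits there yields connected bipartite graphs. Intuitively this holds because color-flipping acts component-wise and preserves connectedness, so the action on a set of connected bicolored graphs is the product of the actions on each component; but making this rigorous requires care with how $\Gamma$-species composition (Definition \ref{def:gammaComposition}) interacts with the combinatorial logarithm $\Omega$, which itself is a virtual species. The remaining steps—the component decomposition $\s{BP}=\s{E}\circ\s{CBP}$ and the final substitution—are routine applications of composition and the already-proved lemmas.
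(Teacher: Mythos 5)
Your proposal follows exactly the paper's own chain of reasoning: Lemma \ref{lem:firstlemma} gives $\s{PBP} = \s{BP}\circ\Omega$, then $\s{BP} = \s{E}\circ\s{CBP}$ by component decomposition, then Lemma \ref{lem:secondlemma} gives $\s{CBP} = \s{CBC}/S_2$, and finally $\s{CBC} = \Omega\circ\s{BC}_+$ by left-composing $\Omega$ with the set-of-connected-components identity. The compatibility subtlety you flag (that the color-flip $S_2$-action identifies $\s{CBC}/S_2$ with $(\Omega\circ\s{BC}_+)/S_2$) is simply elided in the paper's proof, so your treatment is, if anything, slightly more careful than the original.
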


\begin{proof} By Lemma \ref{lem:firstlemma}, $\s{PBP} = \s{BP}\circ\Omega$. Since $\s{CBP}$ is connected bipartite graphs, $\s{BP} = \s{E}\circ \s{CBP}$. By Lemma \ref{lem:secondlemma}, $\s{CBP} = \s{CBC}/S_2$. Finally, a nonempty bicolored graph is a nonempty set of connected bicolored graphs, so by left-composing $\Omega$, we get $\s{CBC} = \Omega\circ \s{BC}_+$. Combining all of this, we get the desired result.\end{proof}

We can use this formula and the cycle index series for $\s{BC}$ and $\Omega$ to compute the associated series of $\s{PBP}$ (Appendix \ref{appB}).

\section{Point-determining graphs and graphs without endpoints}

In this section, we generalize some results of Gessel and Li \cite{2007} on point-determining graphs and graphs without endpoints. This generalization is significant in large part because it yields enumerative results for certain subspecies of bipartite graphs and connected bipartite graphs.

We will use the following notation throughout this section.

\begin{definition}
If $\Phi$ is a subspecies of the graph species $\s{G}$, then we will say that $\Phi$ is a \emph{graph subspecies}; a \emph{$\Phi$-graph} is a graph belonging to the species $\Phi$ (that is, a $\Phi$-structure).
\end{definition}

\subsection{Point-determining \texorpdfstring{$\Phi$}{Phi}-graphs} \label{sec:pdPhi}

First, some definitions and notation. Recall from Section 4 the definition of point-determining:
\begin{definition}
Given a graph $G$ and a vertex $v$, the \emph{neighborhood} of $v$ is the set of vertices of $G$ that are adjacent to $v$. A graph is \emph{point-determining} if no two distinct vertices have the same neighborhoods. The species of point-determining graphs is called $\s{P}$; if $\Phi$ is a graph subspecies, then the species of point-determining $\Phi$-graphs is called $\Phi_{\s{P}}$.
\end{definition}
We can also think of point-determining graphs in terms of duplicate vertices:
\begin{definition}
Let $G$ be a graph and let $v$ and $w$ be distinct vertices of $G$. Then $v$ and $w$ are \emph{duplicates} if they have the same neighborhoods.
\end{definition}
Then a graph is point-determining if and only if none of its vertices has a duplicate.

Gessel and Li \cite[Theorem~2.2]{2007} prove the following theorem, which we present using our own notation and wording. We will generalize this theorem to apply to other classes of graphs.

\begin{theorem} \label{thm:original1}
If $\s{G}$ is the species of all graphs, $\s{P}$ is the species of point-determining graphs, and $\s{E}_+$ is the species of nonempty sets (or nonempty edgeless graphs), then
\[ \s{G} = \s{P} \circ \s{E}_+. \]
\end{theorem}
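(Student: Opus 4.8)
The plan is to construct a natural bijection between $\s{G}[U]$ and $(\s{P}\circ\s{E}_+)[U]$ for each finite label set $U$, exactly paralleling the proof of Lemma~\ref{lem:firstlemma}; the only new content is verifying that the construction lands in the species of \emph{all} graphs, where no bipartiteness hypothesis is available. First I would observe that the duplicate relation, extended by reflexivity, is an equivalence relation on the vertices of any graph $G$: symmetry is immediate, and transitivity holds because ``having the same neighborhood'' is transitive. I would also record the basic fact that duplicate vertices are never adjacent (if $v$ and $w$ shared a neighborhood and were adjacent, then $w\in N(v)=N(w)$ would force a loop at $w$), so each equivalence class induces an edgeless subgraph---an $\s{E}_+$-structure.

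Next I would define the forward map. Given $G$, I collapse each duplicate-class to a single vertex to obtain a quotient graph, declaring two classes adjacent precisely when some (equivalently every) pair of their representatives is adjacent in $G$; this is well-defined because members of a class share neighborhoods. I would then check that this quotient graph $P$ is point-determining: if two classes $[v],[w]$ had equal neighborhoods among classes, then taking the union of the adjacent classes shows $N_G(v)=N_G(w)$, so $v$ and $w$ are duplicates and $[v]=[w]$. Labeling each vertex of $P$ by its class yields an $(\s{P}\circ\s{E}_+)$-structure.

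For the inverse map, given a point-determining graph $P$ together with a nonempty set $S_x$ of labels attached to each vertex $x$, I would build a graph $G$ on $\bigsqcup_x S_x$ by joining $a\in S_x$ to $b\in S_y$ exactly when $x$ and $y$ are adjacent in $P$, leaving labels within a common $S_x$ nonadjacent. The heart of the argument---and the step I expect to be the main obstacle---is showing that these two maps are mutually inverse, which reduces to proving that distinct vertices of $P$ expand to vertices of $G$ that are \emph{not} duplicates. This is exactly where point-determinacy of $P$ and the nonemptiness of each $S_x$ are both essential: since $N_P(x)\neq N_P(y)$, some vertex $z$ lies in one neighborhood but not the other, and because $S_z\neq\emptyset$ the corresponding $G$-neighborhoods genuinely differ. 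Were empty sets allowed, this could fail, which is precisely why the composition is with $\s{E}_+$ rather than $\s{E}$.

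Finally I would verify naturality: relabeling the vertices of $G$ by a bijection $\sigma$ permutes the duplicate-classes compatibly, so the forward map commutes with transports, and likewise for the inverse. Since both maps respect isomorphisms and are mutually inverse, they constitute a species isomorphism, giving $\s{G}=\s{P}\circ\s{E}_+$.
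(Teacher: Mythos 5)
Your proof is correct, but it takes a genuinely different route from the paper's. The paper never argues Theorem \ref{thm:original1} directly: the statement is quoted from Gessel and Li and then recovered as the special case $\Phi = \s{G}$ of the generalized Theorem \ref{thm:pd}, whose proof runs through the superimposition operation $\diamond$, Lemma \ref{lem:super}, and Proposition \ref{prop:remark} --- and those last two ingredients are themselves imported from Gessel and Li without proof. Your argument is instead a self-contained construction of the species isomorphism: you quotient by the duplicate equivalence relation (your observation that duplicates are never adjacent, so each class carries an $\s{E}_+$-structure, i.e.\ a nonempty edgeless graph, is exactly right in a simple graph), verify that the quotient is point-determining, and your expansion map is precisely the superimposition of Proposition \ref{prop:remark}. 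The step you single out as the heart --- that point-determinacy of $P$ together with nonemptiness of each $S_z$ forces distinct vertices of $P$ to expand to non-duplicate vertices of $G$ --- is exactly the uniqueness hypothesis that the paper discharges by feeding Proposition \ref{prop:remark} into Lemma \ref{lem:super}; in effect you have supplied proofs of the black-boxed lemmas, and your identification of why $\s{E}_+$ rather than $\s{E}$ is needed matches the paper's remark about summability of empty structures under composition. What the paper's framework buys is generality: the same superimposition argument yields $\Phi = \Phi_{\s{P}} \circ \s{E}_+$ for every graph subspecies $\Phi$ closed under creating and deleting duplicates (hence Corollary \ref{cor:pd} and the bipartite applications of Section 4), whereas your direct argument quietly uses that the quotient and the expansion of an arbitrary graph are again graphs --- trivially true for $\s{G}$, but precisely the closure hypothesis needed in general. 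One small point worth stating explicitly: to see that collapsing the expanded graph returns exactly $(P, \{S_x\})$, you need the duplicate classes of $G$ to be \emph{exactly} the sets $S_x$; your argument shows that distinct $S_x$ and $S_y$ never merge, and the complementary fact that each $S_x$ is not split is immediate from your construction, since all its members share the neighborhood $\bigcup_{z \in N_P(x)} S_z$ and are pairwise nonadjacent, but it belongs in the proof alongside the cross-class case.
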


The proof of our generalized version, as well as the version by Gessel and Li, requires a combinatorial interpretation of the composition of two graph subspecies:
\begin{definition}
Let $G$ be a graph with vertices $v_1, v_2, \ldots, v_n$; let $H_1, H_2, \ldots, H_n$ be pairwise disjoint graphs, and for each $k$ let $V_k$ be the vertex set of $H_k$. Then the \emph{superimposition} of $G$ on the graphs $H_1, \ldots, H_n$ is the graph with vertex set $V_1 \cup V_2 \cup \cdots \cup V_n$ where vertices $u \in V(H_i)$ and $v \in V(H_j)$ are adjacent if either
\begin{itemize}
\item $v_i$ and $v_j$ are adjacent in $G$ or
\item $i = j$ and $u$ and $v$ are adjacent in $H_i$.
\end{itemize}
If $\Phi$ and $\Psi$ are two graph subspecies, then $\Phi \diamond \Psi$ denotes the species consisting of superimpositions of a $\Phi$-graph on a set of $\Psi$-graphs.
\end{definition}
This means that we are taking a collection of graphs and linking their vertices with edges according to the superimposed graph. The following lemma (first stated by \cite[Lemma~1.4]{2007}) relates this idea with species composition:
\begin{lemma} \label{lem:super}
If $\Phi$ and $\Psi$ are two graph subspecies such that every $(\Phi \diamond \Psi)$-graph can be expressed uniquely as a superimposition of a $\Phi$-graph on a set of $\Psi$-graphs, then $\Phi \diamond \Psi = \Phi \circ \Psi$.
\end{lemma}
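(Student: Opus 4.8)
The plan is to construct an explicit species isomorphism $\alpha \colon \Phi \circ \Psi \to \Phi \diamond \Psi$ and to verify that it satisfies the naturality condition from the definition of a species isomorphism. Recall that a $(\Phi \circ \Psi)$-structure on a label set $U$ is a pair $(s, T)$, where $T$ is a set of $\Psi$-graphs whose vertex sets partition $U$ and $s \in \Phi[T]$ is a $\Phi$-graph having the elements of $T$ as its vertices. This is exactly the data of ``a $\Phi$-graph superimposed on a set of $\Psi$-graphs,'' so I would define $\alpha_U(s, T)$ to be the superimposition of $s$ on the graphs in $T$. This is a graph on $U$, and by the definition of $\diamond$ it is a $(\Phi \diamond \Psi)$-structure.

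First I would establish that each $\alpha_U$ is a bijection. Surjectivity is immediate: the species $\Phi \diamond \Psi$ is defined to consist of exactly those graphs arising as such superimpositions, so every $(\Phi \diamond \Psi)$-structure is $\alpha_U(s, T)$ for some $(s, T)$. Injectivity is where the hypothesis does its work: a preimage $(s, T)$ of a graph $G$ under $\alpha_U$ is precisely an expression of $G$ as a superimposition of a $\Phi$-graph on a set of $\Psi$-graphs, so the assumption that every $(\Phi \diamond \Psi)$-graph admits a \emph{unique} such expression forces $\alpha_U$ to be one-to-one. Thus the uniqueness hypothesis is exactly what upgrades the tautological surjection to a bijection.

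Next I would check naturality. For a bijection $\sigma \colon U \to V$, the transport $(\Phi \circ \Psi)[\sigma]$ relabels each $\Psi$-graph $t \in T$ by the appropriate restriction $\sigma_t$ of $\sigma$ to obtain a new set $T'$ of $\Psi$-graphs, and transports $s$ along the induced bijection $\tau \colon T \to T'$. On the other side, the transport of $\Phi \diamond \Psi$ is simply the graph relabeling $\s{G}[\sigma]$. The content to verify is that relabeling the superimposed graph by $\sigma$ yields the same graph as forming the superimposition of the relabeled $\Phi$-structure on the relabeled $\Psi$-graphs. This follows directly from the adjacency rule defining superimposition: an edge in $\alpha_U(s,T)$ arises either from an edge of $s$ between two blocks or from an edge internal to some $\Psi$-graph, and both kinds of adjacency are preserved identically whether one relabels first and then superimposes or superimposes first and then relabels.

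I expect the main obstacle to be bookkeeping rather than conceptual: carefully matching the compact definition of the composition transport (the passage from $\sigma$ to $\tau$ via the restrictions $\sigma_t$) with the vertex relabeling on the superimposition side, and confirming that the identification of ``preimage of $\alpha_U$'' with ``expression as a superimposition'' is exact so that the uniqueness hypothesis applies cleanly. Once the family of bijections and the naturality square are in hand, the fact that combinatorially equal species share all three associated series gives $\Phi \diamond \Psi = \Phi \circ \Psi$, as claimed.
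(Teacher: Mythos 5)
Your proof is correct; there is nothing in the paper to compare it against, because the paper states this lemma without proof, deferring to Gessel and Li \cite[Lemma~1.4]{2007}. Your explicit species isomorphism $\alpha_U(s,T) = {}$the superimposition of $s$ on the graphs in $T$, with surjectivity tautological, injectivity supplied exactly by the uniqueness hypothesis, and naturality checked via the adjacency rule, is precisely the argument being deferred to, so you have filled in a gap the paper leaves to the citation. One bookkeeping point worth making explicit in the surjectivity step: the definition of $\Phi \diamond \Psi$ presents a structure as a superimposition of a $\Phi$-graph $G$ with vertices $v_1, \ldots, v_n$ on disjoint graphs $H_1, \ldots, H_n$, whereas a preimage under $\alpha_U$ requires a $\Phi$-graph $s \in \Phi[T]$ whose vertex set is literally the set $T = \{H_1, \ldots, H_n\}$ of $\Psi$-graphs; to pass from one to the other you must transport $G$ along the bijection $v_i \mapsto H_i$, which is legitimate precisely because $\Phi$ is a species and hence closed under transport. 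With that relabeling made explicit, your identification of ``preimage of $\alpha_U$'' with ``expression as a superimposition'' is exact (note also that, given $T$ and the superimposed graph, the $\Phi$-structure $s$ is recoverable from the adjacency rule, so uniqueness of the expression really does pin down the pair $(s,T)$), and the uniqueness hypothesis yields injectivity as you say.
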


In their proof of Theorem \ref{thm:original1}, Gessel and Li ~\cite{2007} show the following fact:
\begin{proposition} \label{prop:remark} A graph $G$ is the superimposition of a point-determining graph $H$ on a set of edgeless graphs if and only if $H$ is the graph obtained from $G$ by removing all duplicates of every vertex.
\end{proposition}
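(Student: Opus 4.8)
The plan is to prove both implications by first isolating a single structural fact: on the vertices of any graph $G$, the relation ``$v$ and $w$ are duplicates or $v=w$'' is an equivalence relation whose classes are edgeless (independent) sets. First I would note that duplicates are never adjacent, since $N(v)=N(w)$ together with $w\in N(v)$ would force $w\in N(w)$, which is impossible in a simple graph; transitivity of ``having the same neighborhood'' is immediate, so the classes are well-defined and span no edges. I would also record that collapsing each class to a single representative is well-defined: deleting one duplicate $w$ of $v$ leaves $N(u)$ unchanged for every surviving vertex $u$, because $w$ lies in $N(u)$ only if the entire class of $w$ does, and a representative of that class survives. This makes ``remove all duplicates of every vertex'' unambiguous up to the natural identification with the quotient graph.

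For the forward implication, suppose $G$ is the superimposition of a point-determining graph $H$ with vertices $h_1,\ldots,h_n$ on nonempty edgeless graphs $K_1,\ldots,K_n$. Since each $K_i$ is edgeless, every vertex of $V(K_i)$ inherits the same neighborhood in $G$, namely $\bigcup_{j:\ h_j \text{ adjacent to } h_i \text{ in } H} V(K_j)$, so any two vertices in a common block $V(K_i)$ are duplicates. The crux is the converse: vertices in distinct blocks $V(K_i)$ and $V(K_j)$ must \emph{not} be duplicates. Here is where the point-determining hypothesis enters: since $N_H(h_i)\neq N_H(h_j)$, some $h_k$ is adjacent in $H$ to exactly one of $h_i,h_j$, and because $K_k$ is nonempty the set $V(K_k)$ then lies in exactly one of the two blocks' neighborhoods, separating them. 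Hence the duplicate classes of $G$ are precisely the sets $V(K_i)$, and collapsing them yields a graph on $n$ vertices in which $V(K_i)$ and $V(K_j)$ are adjacent exactly when $h_i,h_j$ are; this graph is $H$.

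For the converse, suppose $H$ is the graph obtained from $G$ by deleting all duplicates. Let $C_1,\ldots,C_n$ be the duplicate classes of $G$ with surviving representatives $r_i\in C_i$, so $V(H)=\{r_1,\ldots,r_n\}$. I would take $K_i$ to be the edgeless graph on vertex set $C_i$: these are nonempty, pairwise disjoint, edgeless by the structural fact, and they partition $V(G)$. Because all vertices of $C_i$ share the neighborhood of $r_i$, the classes $C_i$ and $C_j$ are either completely joined or completely non-adjacent in $G$ according to whether $r_i,r_j$ are adjacent in $H$, so the superimposition of $H$ on $K_1,\ldots,K_n$ reproduces $G$ exactly. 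That $H$ is point-determining follows because distinct representatives $r_i,r_j$ have distinct neighborhoods (otherwise $C_i=C_j$).

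The hard part will be the forward direction's non-merging claim, and in particular the necessity of the edgeless parts being nonempty: without nonemptiness the distinguishing vertex $h_k$ need not contribute any actual vertex to $G$, and the point-determining hypothesis on $H$ would then fail to separate the blocks, breaking the bijection between duplicate classes and $\Psi$-parts. The only other point needing care is confirming that ``remove all duplicates'' is independent of the choice of representatives, which the equivalence-relation observation in the first paragraph handles.
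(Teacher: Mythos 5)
Your proof is correct, and it is worth noting that the paper itself offers no proof of this proposition: it is quoted from Gessel and Li \cite{2007}, who establish it inside their proof of Theorem~2.2 by essentially the argument you give. Your route is the standard one made fully explicit: the relation ``duplicate or equal'' is an equivalence relation whose classes are independent sets (duplicates cannot be adjacent, since $N(v)=N(w)$ and $w\in N(v)$ would give $w\in N(w)$), the blocks of any superimposition of a point-determining graph on nonempty edgeless graphs coincide exactly with these classes, and conversely collapsing the classes exhibits $G$ as such a superimposition. You also correctly isolate the one hypothesis that is invisible in the bare statement but essential to the forward direction: the edgeless parts must be \emph{nonempty}, which in the paper is automatic because the proposition is only applied through composition with $\mathcal{E}_+$; your observation that an empty distinguishing block $K_k$ would let two blocks merge into a single duplicate class is exactly right. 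Your separation argument in the forward direction is complete even in the boundary case $h_k\in\{h_i,h_j\}$, since then the two blocks are fully joined and adjacency alone rules out duplicates. The only step I would tighten is the final claim that $H$ is point-determining ``because distinct representatives have distinct neighborhoods,'' which as written conflates $N_G$ with $N_H$: the clean fix, already implicit in your first paragraph, is that $N_G(r_i)$ is a union of whole duplicate classes, so $N_G(r_i)=\bigcup_{r_k\in N_H(r_i)} C_k$, and hence equality of $H$-neighborhoods forces equality of $G$-neighborhoods, putting $r_i$ and $r_j$ in the same class and forcing $i=j$.
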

We rely on this in the proof of our generalized result. There is one condition on the graph subspecies $\Phi$ that allows the generalized theorem on point-determining graphs to hold:
\begin{definition}
Let $\Phi$ be a graph subspecies. We say that $\Phi$ is \emph{closed under creating and deleting duplicates of vertices} if, for every graph $G$ and any distinct vertices $v$ and $w$ that are duplicates of each other, the following condition holds:
\[ \mbox{$G$ is a $\Phi$-graph if and only if $G - v$ is a $\Phi$-graph.}\]
\end{definition}
We are now ready for the theorem.

\begin{theorem} \label{thm:pd}
If $\Phi$ is a graph subspecies that is closed under creating and deleting duplicates of vertices, then $\Phi = \Phi_{\s{P}} \circ \s{E}_+$.
\end{theorem}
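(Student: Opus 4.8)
The plan is to follow the template of Gessel and Li's proof of Theorem~\ref{thm:original1}, replacing ``all graphs'' by $\Phi$ throughout and invoking the closure hypothesis exactly where they silently use the (automatic) closure of $\s{G}$ under creating and deleting duplicates. Concretely, I would first establish the identity of structure sets $\Phi = \Phi_{\s{P}} \diamond \s{E}_+$, and then invoke Lemma~\ref{lem:super} to upgrade $\diamond$ to $\circ$, yielding $\Phi = \Phi_{\s{P}} \circ \s{E}_+$. For the identity $\Phi = \Phi_{\s{P}} \diamond \s{E}_+$ I would show that, for every label set $U$, a graph $G$ on $U$ is a $\Phi$-graph if and only if it is a superimposition of a point-determining $\Phi$-graph on a set of nonempty edgeless graphs, and that this correspondence is canonical, hence natural in $U$.

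The heart of the argument is the following equivalence: if $H$ is the graph obtained from $G$ by deleting all duplicates of every vertex, then $G$ is a $\Phi$-graph if and only if $H$ is. I would prove this by exhibiting a sequence of single-vertex moves connecting $H$ to $G$, each of which creates one duplicate of an already-present vertex. Starting from $H$, which has exactly one vertex per duplicate class of $G$, I add the remaining members of each class one at a time, copying the neighborhood of the representative; at the moment each new vertex is added it is a genuine duplicate of the representative in the current graph, so the closure hypothesis applies as a biconditional to that single step. Chaining these biconditionals from $H$ up to $G$ gives $G \in \Phi \iff H \in \Phi$. This is the one place the closure hypothesis is invoked, and it is used in both directions at once.

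With this equivalence in hand, both inclusions follow from Proposition~\ref{prop:remark}. Given a $(\Phi_{\s{P}} \diamond \s{E}_+)$-graph $G$, Proposition~\ref{prop:remark} identifies its point-determining core $H$ as $G$ with all duplicates removed; since $H$ is a point-determining $\Phi$-graph, the equivalence gives $G \in \Phi$. Conversely, given $G \in \Phi$, let $H$ be its duplicate-reduction: $H$ is point-determining by construction, it lies in $\Phi$ by the equivalence, and Proposition~\ref{prop:remark} exhibits $G$ as the superimposition of $H$ on the edgeless graphs formed by the duplicate classes, so $G$ is a $(\Phi_{\s{P}} \diamond \s{E}_+)$-graph. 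Because $H$ and the duplicate classes are determined canonically by $G$, the decomposition of each $(\Phi_{\s{P}} \diamond \s{E}_+)$-graph is unique, so Lemma~\ref{lem:super} applies and gives $\Phi_{\s{P}} \diamond \s{E}_+ = \Phi_{\s{P}} \circ \s{E}_+$, completing the proof. I expect the main obstacle to be the bookkeeping in the middle step: one must check that ``remove all duplicates'' is well defined (being duplicates is an equivalence relation whose classes are independent sets) and that the \emph{single-step} closure hypothesis really does iterate to the \emph{full} reduction, which is precisely why building $G$ up from $H$ by single duplicate creations is the cleanest route.
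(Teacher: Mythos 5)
Your proposal matches the paper's proof essentially step for step: both establish $\Phi = \Phi_{\s{P}} \diamond \s{E}_+$ by applying Proposition~\ref{prop:remark} together with the closure hypothesis in each direction, and both upgrade $\diamond$ to $\circ$ via the uniqueness of the decomposition and Lemma~\ref{lem:super}. Your explicit chain of single duplicate-creation moves merely spells out the iteration of the one-vertex closure hypothesis that the paper applies implicitly, so it is a sound (and slightly more careful) rendering of the same argument.
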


\begin{proof}
We first prove that $\Phi_{\s{P}} \diamond \s{E}_+ = \Phi_{\s{P}} \circ \s{E}_+$, using Lemma \ref{lem:super}. Let $G$ be a $(\Phi_{\s{P}} \diamond \s{E}_+)$-graph; then it is the superimposition of a $(\Phi_{\s{P}})$-graph $H$ on a set of edgeless graphs. By Proposition \ref{prop:remark}, $G$ equals a unique such superimposition. Therefore, by Lemma \ref{lem:super}, $\Phi_{\s{P}} \diamond \s{E}_+ = \Phi_{\s{P}} \circ \s{E}_+$.

We now prove that $\Phi = \Phi_{\s{P}} \diamond \s{E}_+$. Since both of these species are subspecies of the species of graphs $\s{G}$, it is sufficient to prove that their sets of structures are equal.

Let $G$ be a $\Phi$-graph. By Proposition \ref{prop:remark}, $G$ is a superimposition of a point-determining graph $H$ on a set of edgeless graphs, where $H$ is obtained from $G$ by deleting duplicates. Since $\Phi$ is closed under deleting duplicates, we have that $H$ is a $\Phi$-graph, and so it is a $\Phi_{\s{P}}$-graph. Therefore, $G$ is the superimposition of a $\Phi_{\s{P}}$-graph on a set of edgeless graphs; that is, $G$ is a $(\Phi_{\s{P}} \diamond \s{E}_+)$-graph.

Now let $G$ be a $(\Phi_{\s{P}} \diamond \s{E}_+)$-graph. Then $G$ is a superimposition of a $(\Phi_{\s{P}})$-graph $H$ on a set of edgeless graphs. By Proposition \ref{prop:remark}, $G$ can be obtained from $H$ by creating duplicates. So, since $H$ is a $\Phi$-graph and $\Phi$ is closed under creating duplicates, $G$ is a $\Phi$-graph.

Therefore, $\Phi = \Phi_{\s{P}} \diamond \s{E}_+ = \Phi_{\s{P}} \circ \s{E}_+$.
\end{proof}

Gessel and Li's result \cite[Theorem~2.2]{2007} (shown here as Theorem \ref{thm:original1}) is obtained as the special case $\Phi = \s{G}$. Here are some other special cases.
\begin{corollary} \label{cor:pd} Let $r \ge 2$ be an integer; $\Phi = \Phi_{\s{P}} \circ \s{E}_+$ if $\Phi$ is one of these graph subspecies:
\begin{itemize}
\item $_r\s{G}$, the species of $r$-partite graphs;
\item $\s{G}^C_{\ge2}$, the species of connected graphs on at least two vertices;
\item $_r\s{G}_{\ge2}^C$, the species of connected $r$-partite graphs on at least two vertices.
\end{itemize}
\end{corollary}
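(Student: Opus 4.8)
The plan is to reduce everything to Theorem \ref{thm:pd}: for each of the three subspecies $\Phi$ listed, it suffices to verify that $\Phi$ is closed under creating and deleting duplicates of vertices, whereupon the identity $\Phi = \Phi_{\s{P}} \circ \s{E}_+$ follows at once. The first thing I would record is a general observation that streamlines all three verifications: if $v$ and $w$ are duplicates, then they are non-adjacent (since $w \in N(v)$ would force $w \in N(w)$, which is impossible), and passing between $G$ and $G - v$ changes the neighborhood of no other vertex. So in each case the only question is how the defining property of $\Phi$ behaves under a single ``clone'' operation that adds or removes a twin of an existing vertex.

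For ${}_r\s{G}$, the $r$-partite graphs, I would argue both directions via proper $r$-colorings. Deletion is immediate, since any subgraph of an $r$-partite graph is $r$-partite, so $G - v$ inherits $r$-partiteness. For the reverse direction, given a proper $r$-coloring of $G - v$, I would color the reinserted twin $v$ with the same color as its duplicate $w$; because $v$ and $w$ share a neighborhood and are non-adjacent, the coloring remains proper, so $G$ is $r$-partite. This case is the most routine.

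The connected case $\s{G}^C_{\ge2}$ is where I expect the real work, and the main obstacle, to lie, because I must simultaneously control connectivity and the ``at least two vertices'' constraint. I would first note that a connected graph possessing a pair of duplicates must have at least three vertices, since two mutually non-adjacent duplicates with no other vertices would form two separate components; this guarantees $G - v$ still has at least two vertices. For connectivity of $G - v$, the idea is that $w$ acts as a backup for $v$: any path in $G$ through $v$ can be rerouted through $w$, because the predecessor and successor of $v$ on that path are neighbors of $v$ and hence of $w$, yielding a walk and therefore a path that avoids $v$. For the reverse direction, I would observe that since $G - v$ is connected on at least two vertices, $w$ must have a neighbor there, so $N(v) = N(w)$ is nonempty and reinserting $v$ keeps $G$ connected. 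The delicate points are the vertex-count bookkeeping and ensuring the rerouting yields an honest path rather than merely a walk.

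Finally, ${}_r\s{G}^C_{\ge2}$ is the intersection of the two previous subspecies, so I would close by noting that the closure property is preserved under intersection: if $G \in \Phi_1 \iff G - v \in \Phi_1$ and likewise for $\Phi_2$, then the same biconditional holds for $\Phi_1 \cap \Phi_2$. Invoking Theorem \ref{thm:pd} in each of the three cases then yields the stated identities.
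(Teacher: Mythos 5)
Your proposal is correct and takes essentially the same route as the paper, which states this corollary without proof as immediate special cases of Theorem \ref{thm:pd}; your case-by-case verification of closure under creating and deleting duplicates (non-adjacency of duplicates, recoloring the reinserted twin, rerouting paths through $w$, the vertex-count bound, and preservation of the closure property under intersection) soundly supplies exactly the details the paper leaves implicit. One aside---that passing between $G$ and $G-v$ ``changes the neighborhood of no other vertex''---is literally false for the neighbors of $v$, but none of your three case arguments actually rely on it, so the proof stands as written.
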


In practice, we can use Theorem \ref{thm:pd} to solve for $\Phi_{\s{P}}$ in terms of $\Phi$:
\begin{corollary} \label{cor:psolve}
If $\Phi$ is a graph subspecies that is closed under creating and deleting duplicates of vertices, then $\Phi_{\s{P}} = \Phi \circ \Omega$.
\end{corollary}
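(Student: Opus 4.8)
The plan is to invert the relation from Theorem~\ref{thm:pd} using the species logarithm $\Omega$. Theorem~\ref{thm:pd} gives us $\Phi = \Phi_{\s{P}} \circ \s{E}_+$, and we want to solve this for $\Phi_{\s{P}}$. The key observation is that $\s{E}_+$ has a compositional inverse, namely $\Omega$, as introduced in Section~3.4.1; by definition, $\s{E}_+ \circ \Omega = \Omega \circ \s{E}_+ = X$, where $X$ is the identity for composition.

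First I would invoke Theorem~\ref{thm:pd}, which applies precisely because $\Phi$ is assumed to be closed under creating and deleting duplicates of vertices, to write $\Phi = \Phi_{\s{P}} \circ \s{E}_+$. Next I would right-compose both sides with $\Omega$, obtaining
\[ \Phi \circ \Omega = (\Phi_{\s{P}} \circ \s{E}_+) \circ \Omega. \]
Then I would apply associativity of composition together with the defining property $\s{E}_+ \circ \Omega = X$ to simplify the right-hand side:
\[ (\Phi_{\s{P}} \circ \s{E}_+) \circ \Omega = \Phi_{\s{P}} \circ (\s{E}_+ \circ \Omega) = \Phi_{\s{P}} \circ X = \Phi_{\s{P}}, \]
using the identity law $\Phi_{\s{P}} \circ X = \Phi_{\s{P}}$ from the composition proposition. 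This yields $\Phi_{\s{P}} = \Phi \circ \Omega$, as desired.

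The main subtlety to watch for is that $\Omega$ is a \emph{virtual} species, so these composition manipulations take place in the setting of virtual species rather than ordinary species; one must make sure that the associativity of composition and the identity laws, stated earlier for ordinary species, extend to the virtual setting. The excerpt explicitly notes that addition, multiplication, and composition work on virtual species as they do on ordinary species, so this extension is available to us. A second point worth a remark is that for the composition $\Phi_{\s{P}} \circ \s{E}_+$ to be well-defined we need $\s{E}_+[\emptyset] = \emptyset$, which holds since $\s{E}_+$ is the species of \emph{nonempty} sets; this is exactly the condition guaranteeing that the compositional inverse $\Omega$ exists and that the cancellation is legitimate. I expect no real obstacle here: the corollary is essentially a one-line algebraic consequence of Theorem~\ref{thm:pd} once the existence and inverse property of $\Omega$ are taken as given.
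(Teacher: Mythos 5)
Your proposal is correct and matches the paper's own (implicit) argument exactly: the paper presents this corollary as an immediate consequence of Theorem~\ref{thm:pd}, obtained by right-composing $\Phi = \Phi_{\s{P}} \circ \s{E}_+$ with $\Omega$ and cancelling via $\s{E}_+ \circ \Omega = X$, just as it does in the proof of Lemma~\ref{lem:firstlemma}. Your added remarks on the virtual-species setting and on $\s{E}_+[\emptyset] = \emptyset$ are accurate and, if anything, slightly more careful than the paper itself.
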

Recall that $\Omega$ is a virtual species and is the compositional inverse of $\s{E}_+$. In the case where $\Phi = \s{BP}$ (bipartite graphs), this corollary yields an expression for $\s{PBP}$ (point-determining bipartite graphs), which is the same result we have in Lemma \ref{lem:firstlemma}.

\subsection{\texorpdfstring{$\Phi$}{Phi}-graphs without endpoints}
In Section 3 of ~\cite{2007}, Gessel and Li discuss graphs without endpoints. Their work focused on connected graphs; we will now generalize Gessel and Li's work to $\Phi$-graphs.

\begin{definition}
Given a general graph $G$, a vertex $v$ of $G$ is an \emph{endpoint} of $G$ if and only if $d(v)=1$ (where $d(v)$ is defined as the degree of vertex $v$).
\end{definition}

Notice that the complete graph on two vertices has two endpoints, whereas the complete graph on three vertices has zero endpoints. This leads us to our next two definitions. Note that the empty graph is not considered a connected graph.

\begin{definition}
A graph $G$ is a \emph{graph without endpoints} if, for every vertex $v$ of $G$, $d(v)\neq 1$. The species of graphs without endpoints is called $\s{M}$; if $\Phi$ is a graph subspecies, then the species of $\Phi$-graphs without endpoints is called $\Phi_{\s{M}}$. 
\end{definition}

\begin{definition}
Given a connected graph $G$, the induced subgraph $C\subseteq G$ is a \emph{core} of $G$ if and only if $C$ is a maximal subgraph without endpoints.
\end{definition}
The core of a tree on $\ge 2$ vertices is not unique: every vertex is a core. As for non-tree graphs, it is not immediately apparent whether its core is unique, or even whether it has a core. The following lemma addresses that.
\begin{lemma} \label{lem:core}
If a connected graph $G$ is not a tree, then it has a \textit{unique} core, obtained by removing the endpoints and their edges until there are no endpoints left. This core has at least two vertices.
\end{lemma}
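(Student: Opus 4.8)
The plan is to analyze the \emph{pruning process} directly. Set $G_0 = G$, and let $G_{i+1}$ be the graph obtained from $G_i$ by deleting every vertex $v$ with $d(v)=1$ together with its incident edge. Since $G$ is finite and each step (unless $G_i$ already has no endpoints) removes at least one vertex, the sequence stabilizes at a graph $C := G_k = G_{k+1}$, which by construction has no endpoints; this is exactly the subgraph described in the statement. The first thing I would establish is that every $G_i$ is connected: given two non-leaf vertices $u$ and $w$ of $G_i$, a shortest path from $u$ to $w$ in $G_i$ has all of its internal vertices of degree at least $2$, so none of them is a leaf; hence the entire path, and all its edges, survive into $G_{i+1}$. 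Therefore $G_{i+1}$ is connected, and in particular $C$ is connected.

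Next I would prove the size claim, which is where the hypothesis that $G$ is not a tree enters. Since $G$ is connected and not a tree, it contains a cycle $Z$. I claim that no edge of $Z$ is ever deleted. Suppose otherwise and consider the first step at which some edge of $Z$ is removed; such an edge is deleted because one of its endpoints $v$, which lies on $Z$, satisfies $d(v)=1$ at that step. But because no edge of $Z$ has been removed before this step, both of the cycle-edges incident to $v$ are still present, so $d(v)\ge 2$, a contradiction. Hence $Z$ survives intact, so $Z\subseteq C$; since a cycle in a simple graph has at least three vertices, $|V(C)|\ge 2$. Combined with connectivity, this shows $C$ has minimum degree at least $2$, because a connected graph on at least two vertices has no isolated vertices and $C$ has no endpoints.

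Finally I would show that $C$ is the unique maximal subgraph without endpoints. That $C$ is itself a subgraph without endpoints is already established, so it remains to prove that any subgraph $H$ without endpoints satisfies $V(H)\subseteq V(C)$. I would argue by a ``first deleted vertex'' contradiction: if some vertex of $H$ were pruned, let $i$ be the earliest step at which a vertex $v\in V(H)$ is deleted. Then $V(H)\subseteq V(G_i)$, so every edge of $H$ incident to $v$ still survives in $G_i$, giving $d_{G_i}(v)\ge d_H(v)\ge 2$, which contradicts $d_{G_i}(v)=1$. Thus no vertex of $H$ is ever removed, so $V(H)\subseteq V(C)$, and $C$ is the maximum, hence the unique maximal, such subgraph.

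The step I expect to be the main obstacle is this last one, because the inequality $d_H(v)\ge 2$ fails for vertices that are \emph{isolated} in $H$: the phrase ``without endpoints'' forbids degree $1$ but permits degree $0$. I would resolve this by leaning on the connectivity established above, restricting attention to the connected, minimum-degree-at-least-$2$ subgraph produced by the process, so that the relevant competing subgraphs also have minimum degree at least $2$ and the containment argument goes through. Establishing that the cycle survives is the other essential point, since it is precisely what distinguishes non-trees, whose core is a single well-defined subgraph of minimum degree at least $2$, from trees, whose ``cores'' are the individual vertices.
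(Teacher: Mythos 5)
Your proposal is correct in substance but takes a genuinely different route from the paper's proof. The paper splits the lemma into three independent arguments: existence (prune endpoints one at a time, noting that removing an endpoint cannot disconnect the graph), the size bound (a core, being maximal, must be at least as large as the cycle guaranteed by the non-tree hypothesis), and uniqueness (if $C_1 \neq C_2$ were two cores, the induced subgraph on $V(C_1) \cup V(C_2)$ would still have minimum degree $\ge 2$, contradicting maximality of each). You instead analyze the pruning dynamics once and extract everything from two invariants --- connectivity is preserved (your shortest-path argument; the paper's one-at-a-time deletion is the same idea in serial form) and no edge of a cycle is ever deleted --- plus the ``first deleted vertex'' absorption argument showing that every competing subgraph $H$ with $d_H(v) \ge 2$ for all $v$ satisfies $V(H) \subseteq V(C)$. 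This buys you something the paper's proof never actually delivers: the lemma asserts the core is \emph{obtained by} the pruning process, but the paper only shows that the pruned graph $G'$ is \emph{some} connected endpoint-free subgraph and that a maximal one therefore exists; it never verifies that $G'$ itself equals the unique core. Your containment argument shows $C$ is the \emph{maximum} endpoint-free subgraph of minimum degree $\ge 2$, so it is simultaneously the core and the pruning output, closing that loop. The paper's union-of-two-cores trick is slicker if one wants only uniqueness, but your absorption property is the stronger statement (it is the standard characterization of the $2$-core).

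One caveat on the loophole you flagged yourself: your patch says that restricting to \emph{connected} competitors forces minimum degree $\ge 2$, but that holds only for competitors with at least two vertices --- a single vertex is connected and endpoint-free (degree $0 \neq 1$), and such one-point subgraphs can genuinely be maximal connected endpoint-free subgraphs lying outside $C$. For example, in a triangle $abc$ with a pendant path $a$--$d$--$e$, the subgraph on $\{e\}$ alone is maximal (any larger connected subgraph through $e$ contains $d$ and makes $e$ an endpoint), yet $e \notin V(C)$. So the correct restriction is to subgraphs of minimum degree $\ge 2$, i.e., endpoint-free subgraphs with at least two vertices and no isolated vertices, which is evidently how the lemma's final sentence intends ``core'' to be read. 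You should not be troubled that your fix needs this extra clause: the paper's own proof is exposed to exactly the same issue, since its claim that every core ``must have at least 3 vertices'' compares an arbitrary maximal endpoint-free subgraph against the cycle, tacitly treating \emph{maximal} as \emph{maximum}; under inclusion-maximality the one-vertex subgraph above is simply incomparable to the cycle. With the minimum-degree-$\ge 2$ reading made explicit, your argument is complete and, on this point, more careful than the original.
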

\begin{proof}
Let $G$ be a connected graph that is not a tree. We first prove existence of a core. If $G$ has no endpoints, then it is already a core of itself, and we are done; otherwise, remove an endpoint (and its edge) from $G$. Continue to remove endpoints one by one until none remain, resulting in a graph $G'$ without endpoints. Since removing an endpoint cannot disconnect the graph, $G'$ is connected. Therefore, $G'$ is a connected induced subgraph of $G$ without endpoints, and so there is a maximal such subgraph. Therefore there is a core.

We now prove that every core of $G$ must have at least two vertices. Since $G$ is not a tree, it has a cycle, which must have at least $3$ vertices. The induced subgraph on this cycle has no endpoints. So every core, being a \textit{maximal} induced subgraph without endpoints, must have at least 3 vertices. (And $3 \ge 2$.)

We now prove uniqueness. Suppose for contradiction that $G$ has two different cores $C_1$ and $C_2$. Each of these is connected and must have at least two vertices, so every vertex of $C_1$ (resp.~$C_2$) has degree $\ge 2$ in $C_1$ (resp.~$C_2$). Now let $C$ be the induced subgraph on the vertices of $C_1 \cup C_2$. Every vertex of $C$ had degree $\ge 2$ before the union, and taking the induced subgraph on the union cannot reduce the degree, so it still has degree $\ge 2$ in $C$ after the union. Thus, $C$ is an induced subgraph of $G$ without endpoints, and it has more vertices than either of $C_1$ and $C_2$, contradicting the fact that $C_1$ and $C_2$ were maximal. Therefore, $G$ has a unique core.
\end{proof}

There is one condition on the graph subspecies $\Phi$ that allows the generalized theorem on graphs without endpoints to hold:
\begin{definition}
Let $\Phi$ be a graph subspecies. We say that $\Phi$ is \emph{closed under creating and deleting endpoints} if, for every graph $G$ and any endpoint $v$ of $G$, the following condition holds:
\[ \mbox{$G$ is a $\Phi$-graph if and only if $G - v$ is a $\Phi$-graph.}\]
\end{definition}
We are now ready for the theorem.

\begin{theorem} Let $\Phi$ be a subspecies of graphs in which every graph is connected, and assume that $\Phi$ is closed under creating and deleting endpoints. If $\s{A}$ is the species of rooted trees, $a$ the species of unrooted trees, and $\Phi_{\s{M}_{\geq2}}$ the species of $\Phi_{\s{M}}$-graphs on at least two vertices, then
   $$\Phi = \left\{
     \begin{array}{ll}
       \Phi_{\s{M}_{\geq2}} \circ \s{A} + a & \text{if the one-vertex graph is a $\Phi$-graph;}\\
       \Phi_{\s{M}} \circ \s{A} & \text{otherwise.}
     \end{array}
   \right.$$
\end{theorem}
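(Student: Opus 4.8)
The plan is to partition the connected $\Phi$-graphs into two families---trees and non-trees---and to match each family with a summand on the right-hand side. Since every $\Phi$-graph is connected by hypothesis, this is a partition of all of $\Phi$. For the tree part, I would first observe that the closure hypothesis pins down exactly which trees lie in $\Phi$: because adjoining a leaf creates an endpoint and deleting a leaf deletes an endpoint, closure under creating and deleting endpoints shows that the one-vertex graph lies in $\Phi$ if and only if \emph{every} tree lies in $\Phi$. Indeed, any tree can be built from a single vertex by repeatedly adjoining leaves, and conversely stripped down to a single vertex by repeatedly removing leaves, each step preserving membership in $\Phi$. Hence in the first case the tree $\Phi$-graphs are exactly the species $a$ of all unrooted trees, while in the second case $\Phi$ contains no trees at all.

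For the non-tree part, the key tool is the unique core guaranteed by Lemma \ref{lem:core}. Given a connected non-tree $\Phi$-graph $G$, let $C$ be its core, obtained by iteratively deleting endpoints; by Lemma \ref{lem:core} this $C$ is unique, connected, endpoint-free, and on at least two vertices. The peeled-away vertices organize into rooted trees: for each core vertex $c$, the vertices that collapse onto $c$ during the peeling form a tree $T_c$ rooted at $c$, and these trees partition $V(G)$, with the edges of $G$ being exactly the core edges of $C$ together with the tree edges of the $T_c$. I would package this as a natural bijection between non-tree connected $\Phi$-graphs and $(\Phi_{\s{M}_{\geq2}} \circ \s{A})$-structures: the outer $\Phi_{\s{M}_{\geq2}}$-structure is the core $C$ on the set of blocks $\{T_c\}$, and each block carries the rooted-tree $\s{A}$-structure $T_c$. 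The inverse map grafts the rooted trees onto the core, joining their roots exactly as the blocks are joined in $C$. Because the core is canonically determined by $G$, this correspondence commutes with relabelings, so it is an isomorphism of species.

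It remains to check that the closure hypothesis makes both sides land in the right subspecies. Passing from $G$ to $C$ only deletes endpoints, so closure under deleting endpoints gives $C \in \Phi$; being connected, endpoint-free, and on $\ge 2$ vertices, $C$ is a $\Phi_{\s{M}_{\geq2}}$-graph. Conversely, grafting the trees onto $C$ amounts to adjoining leaves one at a time, so closure under creating endpoints keeps every intermediate graph in $\Phi$ and yields $G \in \Phi$; the reconstructed $G$ is connected, and since its core has minimum degree $\ge 2$ it contains a cycle, so $G$ is genuinely a non-tree. Assembling the pieces, the non-tree connected $\Phi$-graphs always equal $\Phi_{\s{M}_{\geq2}} \circ \s{A}$, so in the first case $\Phi = \Phi_{\s{M}_{\geq2}} \circ \s{A} + a$. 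In the second case $\Phi$ has no trees, and since the one-vertex graph is not a $\Phi$-graph we have $\Phi_{\s{M}} = \Phi_{\s{M}_{\geq2}}$, giving $\Phi = \Phi_{\s{M}_{\geq2}} \circ \s{A} = \Phi_{\s{M}} \circ \s{A}$.

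I expect the main obstacle to be the non-tree bijection: unlike the point-determining theorem, this composition is \emph{not} a superimposition (grafted trees meet only at their roots, not in complete joins), so Lemma \ref{lem:super} does not apply, and the bijection with $\Phi_{\s{M}_{\geq2}} \circ \s{A}$ must be built directly from the core decomposition and verified to be inverse to the grafting map. A related subtlety is bookkeeping the tree case correctly: the pure trees must be counted as \emph{unrooted} trees $a$, not as the rooted trees $\s{A}$ that would arise from allowing a one-vertex core, which is precisely why the first case pairs $\Phi_{\s{M}_{\geq2}}$ with a separate $+a$ rather than writing $\Phi_{\s{M}} \circ \s{A}$.
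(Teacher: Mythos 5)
Your proposal is correct and takes essentially the same route as the paper's proof: the same tree/non-tree dichotomy, the unique core from Lemma \ref{lem:core} with rooted trees grafted at the core vertices, the closure hypothesis invoked for both the peeling and grafting directions, and the same case analysis on whether the one-vertex graph is a $\Phi$-graph. If anything, your write-up is more careful than the paper's rather terse argument---for instance, you make explicit the core-decomposition bijection (correctly noting that Lemma \ref{lem:super} does not apply since grafting is not a superimposition), the fact that the one-vertex graph lies in $\Phi$ if and only if every tree does, and the identification $\Phi_{\s{M}} = \Phi_{\s{M}_{\geq2}}$ in the second case.
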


\begin{proof}
Let $G$ be a $\Phi$-graph.

If the one-vertex graph is a $\Phi$-graph, then $G$ is either a tree or has a cycle. If $G$ has a cycle, then by Lemma \ref{lem:core}, $G$ has a unique core of two or more vertices. Therefore, $S$ cannot be $G$'s core but, any $\Phi_{\s{M}_{\geq 2}}$-graph is a contender to be the core of $G$. In order to obtain the endpoints of $G$ from its core, we root the necessary trees at vertices in the core of $G$. This is a $(\Phi_{\s{M}_{\geq2}} \circ \s{A})$-structure. We can get all such structures this way, because $\Phi$ is closed under creating and deleting endpoints. This respects transports because $\Phi$ is a graph subspecies.

If, on the other hand, $G$ is a tree, then it does not have a unique core. Thus, we construct the $\Phi$-graph $G$ just by counting it as a tree. This is an $a$-structure. We can get all $a$-structures this way, because we can start with the one-vertex graph (which is a $\Phi$-graph) and create endpoints to build any tree as a $\Phi$-graph. Combining the two cases, we get the sum $\Phi = \Phi_{\s{M}_{\geq2}} \circ \s{A} + a$, as claimed.

If the one-vertex graph is not a $\Phi$-graph, then we claim that $G$ is not a tree. If it were, then we could delete the endpoints of $G$ until we are left with the one-vertex graph. Since $\Phi$ is closed under deleting endpoints, the one-vertex graph must be a $\Phi$-graph, a contradiction. Thus, $G$ is not a tree.

So $G$ contains a cycle. Then, by the same reasoning as the first case, we get that $G$ is a $(\Phi_{\s{M}} \circ \s{A})$-structure.
\end{proof}

In practice, we can use this to solve for $\Phi_{\s{M}}$ in terms of $\Phi$:
\begin{corollary} \label{cor:phi}
Let $\Phi$ be a subspecies of graphs in which every graph is connected, and assume that $\Phi$ is closed under creating and deleting endpoints. Then
\[ 
\Phi_{\s{M}} = \left\{
     \begin{array}{lr}
       \Phi \circ \ainv - \s{E}_2 + X^2 & \text{if the one-vertex graph is a $\Phi$-graph;}\\
       \Phi \circ \ainv & \text{otherwise}
     \end{array}
   \right. \]
where $\ainv$ is the compositional inverse of $\s{A}$ (rooted trees) and $\s{E}_2$ is the species of two-element sets. \end{corollary}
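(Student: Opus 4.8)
The plan is to invert the two equations of the theorem above by right-composing each with $\ainv$, the compositional inverse of $\s{A}$, relying on the fact that composition of (virtual) species is associative and right-distributive over addition and multiplication, together with the identities $\s{A} \circ \ainv = X$ and $\s{F} \circ X = \s{F}$. The ``otherwise'' case is immediate: from $\Phi = \Phi_{\s{M}} \circ \s{A}$ I would right-compose $\ainv$ to obtain $\Phi \circ \ainv = \Phi_{\s{M}} \circ (\s{A} \circ \ainv) = \Phi_{\s{M}} \circ X = \Phi_{\s{M}}$, which is exactly the claimed formula.

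For the first case I would start from $\Phi = \Phi_{\s{M}_{\geq 2}} \circ \s{A} + a$ and right-compose $\ainv$, using right-distributivity and associativity, to get
$$\Phi \circ \ainv = \Phi_{\s{M}_{\geq 2}} + a \circ \ainv.$$
Two ingredients then remain. First, I would relate $\Phi_{\s{M}_{\geq 2}}$ to $\Phi_{\s{M}}$: since every $\Phi$-graph is connected and the empty graph is not, the only structure in $\Phi_{\s{M}}$ but not in $\Phi_{\s{M}_{\geq 2}}$ is the one-vertex graph, which has no endpoints and (in this case) is a $\Phi$-graph; hence $\Phi_{\s{M}} = \Phi_{\s{M}_{\geq 2}} + X$.

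The crux is computing $a \circ \ainv$, and here I would invoke the dissymmetry theorem for trees, $a = \s{A} + (\s{E}_2 \circ \s{A}) - \s{A}^2$. Right-composing $\ainv$ and evaluating term-by-term gives $\s{A} \circ \ainv = X$, then $(\s{E}_2 \circ \s{A}) \circ \ainv = \s{E}_2 \circ X = \s{E}_2$ by associativity, and $\s{A}^2 \circ \ainv = (\s{A} \circ \ainv)^2 = X^2$ by right-distributivity over multiplication, so that $a \circ \ainv = X + \s{E}_2 - X^2$. Substituting this together with $\Phi_{\s{M}_{\geq 2}} = \Phi_{\s{M}} - X$ into the displayed equation yields $\Phi \circ \ainv = \Phi_{\s{M}} + \s{E}_2 - X^2$, i.e. $\Phi_{\s{M}} = \Phi \circ \ainv - \s{E}_2 + X^2$, as claimed.

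The main obstacle is the computation of $a \circ \ainv$: it forces in the dissymmetry theorem, an external ingredient not developed earlier in the paper, and it requires checking that these compositional manipulations are legitimate for virtual species, since $\ainv$ (and hence $a \circ \ainv$) is genuinely virtual rather than an ordinary species. By contrast, the bookkeeping step $\Phi_{\s{M}} = \Phi_{\s{M}_{\geq 2}} + X$ is routine once one notes that connectedness excludes the empty graph.
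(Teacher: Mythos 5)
Your proposal is correct and follows essentially the same route as the paper's proof: both cases are handled by right-composing the theorem's equations with $\ainv$, the key identity $a \circ \ainv = X + \s{E}_2 - X^2$ is derived from the same dissymmetry theorem $\s{A} + \s{E}_2 \circ \s{A} = a + \s{A}^2$ (which the paper likewise imports from Bergeron--Labelle--Leroux), and your explicit bookkeeping step $\Phi_{\s{M}} = \Phi_{\s{M}_{\geq 2}} + X$ is exactly what the paper uses implicitly when it writes $\Phi_{\s{M}} = (\Phi - a) \circ \ainv + X$. Your version merely makes that last step, and the term-by-term evaluation of $a \circ \ainv$, more explicit than the paper does.
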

\begin{proof}  If $\Phi$ does not contain the singleton vertex, we have:
 \begin{align*}
 \Phi &= \Phi_{\s{M}} \circ \s{A} \\
 \Rightarrow \Phi_{\s{M}} &= \Phi \circ \ainv.
 \end{align*}
 
 If $\Phi$ contains the singleton vertex: 
 \begin{align*}
 \Phi &= \Phi_{\s{M}_{\geq2}} \circ \s{A} + a \\
 \Rightarrow \Phi_{\s{M}} &= (\Phi - a) \circ \ainv + X \\
 &= \Phi\circ \ainv - (a \circ \ainv) + X.
 \end{align*}
 
 Now we use the dissymmetry theorem for trees, a proof of which is in \cite[Theorem 4.1.1]{1998}.
 \begin{align*}
 \s{A} + \s{E}_2 \circ \s{A} &= a + \s{A}^2 \\
 \Rightarrow a &= \s{A} + \s{E}_2\circ \s{A} - \s{A}^2\\
 \Rightarrow a &= (X + \s{E}_2 - X^2) \circ \s{A}.
 \end{align*}
 So we have the formula for $\Phi_\s{M}$:
 \begin{align*}
 \Phi_\s{M} &= \Phi\circ \ainv - (X + \s{E}_2 - X^2) \circ \s{A} \circ \ainv + X \\
 &= \Phi \circ \ainv - (X + \s{E}_2 - X^2) \circ X + X \\
 &= \Phi\circ \ainv - X - \s{E}_2 + X^2 + X \\
 &= \Phi\circ \ainv - \s{E}_2 + X^2.
 \end{align*}
\end{proof}

Notice that if $\Phi$ is the species of connected graphs ($\s{G}^C$), then this corollary yields $\s{G}^C_{\s{M}} = \s{G}^C \circ \ainv - \s{E}_2 + X^2$. This is the exact result found in \cite{2007}: Gessel and Li show that $\s{M}^C = \s{G}^C \circ (X\s{E}(-X)) + \s{E}_2(-X)$. The proofs that $(X\s{E}(-X)) = \ainv$ and that $-\s{E}_2 + X^2 = \s{E}_2(-X)$ are left to the reader.

\subsection{Correspondence between point-determining \texorpdfstring{$\Phi$}{Phi}-graphs and \texorpdfstring{$\Phi$}{Phi}-graphs without endpoints}

Gessel and Li prove that $\iso{\mathcal{P}^C}(x) = \iso{\mathcal{M}^C} + x^2$ \cite[cor.~3.2]{2007}. That is, when counting the unlabeled graphs on $n$ vertices, the number of connected point-determining graphs is equal to the number of connected graphs without endpoints (except for $n=2$). We now prove a more general theorem, whose proof parallels that of Gessel and Li's result.

\begin{theorem}
Let $\Phi$ be a species of connected graphs containing the one-vertex graph, and let $\Phi_{\ge 2} = \Phi - X$ be the species of $\Phi$-graphs on at least two vertices. If $\Phi$ is closed under creating and deleting endpoints and $\Phi_{\ge 2}$ is closed under creating and deleting duplicate vertices, then $\iso{\Phi_{\s{P}}}(x) = \iso{\Phi_\s{M}}(x) + x^2$.
\end{theorem}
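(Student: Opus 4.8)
The plan is to reduce everything to the two explicit corollaries already proved and then to a single identity about the type generating series of $\Omega$ and $\ainv$. First I would write down the two ingredients. Since $\Phi_{\ge 2}$ is closed under creating and deleting duplicate vertices, Corollary \ref{cor:psolve} applied to $\Phi_{\ge 2}$ gives $(\Phi_{\ge 2})_{\s{P}} = \Phi_{\ge 2}\circ\Omega$. A point-determining $\Phi$-graph is either the one-vertex graph (which lies in $\Phi$ and is vacuously point-determining) or a point-determining $\Phi$-graph on at least two vertices, so $\Phi_{\s{P}} = X + \Phi_{\ge 2}\circ\Omega$. On the other side, since $\Phi$ is connected, contains the one-vertex graph, and is closed under creating and deleting endpoints, Corollary \ref{cor:phi} gives $\Phi_{\s{M}} = \Phi\circ\ainv - \s{E}_2 + X^2$.

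Next I would pass to type generating series, using additivity and multiplicativity of $\iso{\s{F}}(x)$. Because $\iso{\s{E}_2}(x) = x^2 = \iso{X^2}(x)$, the two correction terms cancel, so $\iso{\Phi_{\s{M}}}(x) = \iso{\Phi\circ\ainv}(x)$. Writing $\Phi = X + \Phi_{\ge 2}$ and using $X\circ\ainv = \ainv$, this becomes $\iso{\Phi_{\s{M}}}(x) = \iso{\ainv}(x) + \iso{\Phi_{\ge 2}\circ\ainv}(x)$, while $\iso{\Phi_{\s{P}}}(x) = x + \iso{\Phi_{\ge 2}\circ\Omega}(x)$. Thus the theorem reduces to the identity
$$x + \iso{\Phi_{\ge 2}\circ\Omega}(x) = \iso{\ainv}(x) + \iso{\Phi_{\ge 2}\circ\ainv}(x) + x^2.$$

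The crux is the claim $\iso{\Omega}(x) = \iso{\ainv}(x) = x - x^2$. For $\ainv$ I would use the identity $\ainv = X\cdot\s{E}(-X)$ (noted in Section 5.2) together with multiplicativity of the type series: since the exponential property of the set species gives $\s{E}(X)\s{E}(-X) = \mathbf{1}$, we get $\iso{\s{E}(-X)}(x) = 1/\iso{\s{E}}(x) = 1-x$, whence $\iso{\ainv}(x) = x(1-x) = x - x^2$. For $\Omega$ I would start from $\s{E}\circ\Omega = \mathbf{1} + X$ and apply the type-series composition formula (Proposition \ref{prop:sercomp}) with $Z_{\s{E}} = \exp\!\big(\sum_{k\ge1} p_k/k\big)$, obtaining $\sum_{k\ge1}\tfrac1k\iso{\Omega}(x^k) = \log(1+x)$; substituting the candidate $\iso{\Omega}(x) = x - x^2$ verifies this, since $\sum_{k\ge1}\tfrac1k(x^k - x^{2k}) = \log\tfrac{1-x^2}{1-x} = \log(1+x)$, and the relation pins down $\iso{\Omega}$ uniquely by M\"obius inversion. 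Finally, because $\iso{\Omega}(x^k) = \iso{\ainv}(x^k)$ for every $k$, the composition formula $\iso{\Phi_{\ge 2}\circ\Psi}(x) = Z_{\Phi_{\ge 2}}(\iso{\Psi}(x), \iso{\Psi}(x^2), \ldots)$ applied to $\Psi = \Omega$ and $\Psi = \ainv$ yields $\iso{\Phi_{\ge 2}\circ\Omega} = \iso{\Phi_{\ge 2}\circ\ainv}$; the displayed identity then collapses to $x - \iso{\ainv}(x) = x^2$, which holds.

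The main obstacle I anticipate is justifying the type-series composition formula for the \emph{virtual} inner species $\Omega$ and $\ainv$: I would argue that it descends from the plethystic composition of cycle index series (valid for virtual species) via the substitution $p_k \mapsto x^k$, so that only the equality $\iso{\Omega}(x^k) = \iso{\ainv}(x^k)$ of the inner arguments is needed, not any positivity of $\Omega$ or $\ainv$. A secondary point to handle carefully is the low-degree bookkeeping at $n = 1, 2$: checking that the one-vertex graph is correctly split off in $\Phi_{\s{P}}$ and that the $\s{E}_2$ versus $X^2$ correction in $\Phi_{\s{M}}$ is exactly what produces the surviving $x^2$.
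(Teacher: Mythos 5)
Your proposal is correct and takes essentially the same route as the paper: both arguments combine Corollary \ref{cor:psolve} (applied to $\Phi_{\ge 2}$, splitting off the $X$ term) with Corollary \ref{cor:phi}, then pass to type generating series via Proposition \ref{prop:sercomp} and the key identity $\iso{\Omega}(x) = \iso{\ainv}(x) = x - x^2$, so that the $\s{E}_2$ versus $X^2$ discrepancy supplies the surviving $x^2$. The only difference is that you derive $\iso{\Omega} = \iso{\ainv} = x - x^2$ from first principles (via $\ainv = X \cdot \s{E}(-X)$ and $\s{E} \circ \Omega = \mathbf{1} + X$ with M\"obius inversion) and explicitly justify using the composition formula with a virtual inner species, whereas the paper simply cites the proof of \cite[Cor.~3.2]{2007} for the identity and applies Proposition \ref{prop:sercomp} without comment on virtuality.
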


\begin{proof} By Proposition \ref{prop:sercomp}, the type generating series of $\Phi \circ \Omega$ is
\[ \iso{(\Phi \circ \Omega)}(x) = Z_{\Phi}(\iso{\Omega}(x), \iso{\Omega}(x^2), \iso{\Omega}(x^3), \ldots), \]
and the generating series of $\Phi \circ \ainv$ is
\[ \iso{(\Phi \circ \ainv)}(x) = Z_{\Phi}(\iso{\ainv}(x), \iso{\ainv}(x^2), \iso{\ainv}(x^3), \ldots). \]
Then, since $\iso{\Omega} = \iso{\ainv} = x - x^2$ (see proof of \cite[cor.~3.2]{2007}), we have
\begin{align*}
\iso{(\Phi \circ \Omega)}(x) &= Z_{\Phi}(x-x^2, x^2-x^4, x^3-x^6, \ldots) \\
&= \iso{(\Phi \circ \ainv)}(x).
\end{align*}

By Corollary \ref{cor:psolve}, $(\Phi_{\ge2})_{\s{P}} = \Phi_{\ge 2} \circ \Omega$. Then,
\begin{align*}
\Phi_{\s{P}} &= (\Phi_{\ge 2})_{\s{P}} + X \\
&= \Phi_{\ge 2} \circ \Omega + X \\
&= (\Phi - X) \circ \Omega + X \\
&= \Phi \circ \Omega - X \circ \Omega + X \\
&= \Phi \circ \Omega - \Omega + X,
\end{align*}
so $\Phi_{\s{P}} = \Phi \circ \Omega - \Omega + X$. Then the type generating series are also equal:
\begin{align*}
\iso{\Phi_{\s{P}}}(x) &= \iso{(\Phi \circ \Omega)}(x) - \iso{\Omega}(x) + x \\
&= \iso{(\Phi \circ \ainv)}(x) - (x - x^2) + x \\
&= \iso{(\Phi \circ \ainv)}(x) + x^2.
\end{align*}
By Corollary \ref{cor:phi}, $\Phi \circ \ainv + X^2 = \Phi_{\s{M}} + \s{E}_2$, and so
\begin{align*}
\iso{\Phi_{\s{P}}}(x) &= \iso{(\Phi \circ \ainv)}(x) + x^2 \\
&= \iso{\Phi_{\s{M}}}(x) + \iso{E_2}(x) \\
&= \iso{\Phi_{\s{M}}}(x) + x^2.
\end{align*}
Therefore, $\iso{\Phi_{\s{P}}}(x) = \iso{\Phi_{\s{M}}}(x) + x^2$. \end{proof}

Gessel and Li's result \cite[Cor.~3.2]{2007} is obtained as the special case $\Phi = \s{G}^C$. Here is another special case.
\begin{corollary} \label{cor:important} Let $r \ge 2$ be an integer, and let $_r\s{G}^C$ be the species of connected $r$-partite graphs. Then $\iso{_r\s{G}^C_{\s{P}}}(x) = \iso{_r\s{G}^C_{\s{M}}}(x) + x^2$. That is, the number of unlabeled point-determining connected $r$-partite graphs on $n$ vertices equals the number of unlabeled connected $r$-partite graphs without endpoints on $n$ vertices, for all $n \not= 2$. \end{corollary}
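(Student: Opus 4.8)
The plan is to obtain this corollary as the special case $\Phi = {}_r\s{G}^C$ of the theorem immediately preceding it, so the entire task reduces to verifying that the species of connected $r$-partite graphs satisfies that theorem's four hypotheses: that it consists of connected graphs, that it contains the one-vertex graph, that it is closed under creating and deleting endpoints, and that $\Phi_{\ge2} = \Phi - X$ is closed under creating and deleting duplicate vertices. The first two are immediate, since connectivity is built into the definition of ${}_r\s{G}^C$, and a single vertex is trivially connected and admits a proper $r$-coloring (any one of the $r$ colors will do).

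For closure under endpoints I would argue both directions separately. Deleting an endpoint $v$ yields an induced subgraph, which inherits a proper $r$-coloring by restriction, and removing a degree-one vertex from a connected graph cannot disconnect it; hence $G - v$ is again a connected $r$-partite graph. For creating an endpoint, I would attach a pendant vertex $v$ to some vertex $u$ of a connected $r$-partite graph and color $v$ with any color distinct from that of $u$. This is precisely where the hypothesis $r \ge 2$ is essential, since it guarantees that a second color is available; adding a pendant also preserves connectivity, so the enlarged graph lies in ${}_r\s{G}^C$.

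The main obstacle is the fourth hypothesis, closure of $\Phi_{\ge2}$ under duplicates, which needs more care. First I would record that duplicate vertices $v$ and $w$ are necessarily non-adjacent (they share a neighborhood and there are no loops) and have identical neighbor sets. Deleting $v$ then preserves the $r$-coloring by restriction; for connectivity, any path through $v$ can be rerouted through $w$ via the shared neighborhood, so $G - v$ stays connected, and creating a duplicate $v$ of a vertex $w$ dually preserves both properties by coloring $v$ the same color as $w$. The delicate point is the size constraint \emph{at least two vertices}: I would show that a connected graph on exactly two vertices has those two vertices adjacent, hence not duplicates, so any graph possessing a pair of duplicates already has at least three vertices; consequently deleting one duplicate still leaves $\ge 2$ vertices and keeps us inside $\Phi_{\ge2}$.

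With all four hypotheses verified, the theorem yields $\iso{_r\s{G}^C_{\s{P}}}(x) = \iso{_r\s{G}^C_{\s{M}}}(x) + x^2$. Extracting the coefficient of $x^n$ gives the stated count: for every $n \neq 2$ the term $x^2$ contributes nothing, so the number of unlabeled point-determining connected $r$-partite graphs on $n$ vertices equals the number of unlabeled connected $r$-partite graphs without endpoints on $n$ vertices.
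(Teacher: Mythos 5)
Your proposal is correct and follows exactly the route the paper takes: the corollary is obtained by specializing the preceding theorem to $\Phi = {}_r\s{G}^C$, and your verification of the four hypotheses (including the $r \ge 2$ point for creating endpoints and the observation that duplicates are non-adjacent, so a two-vertex connected graph has none) is sound. The paper simply asserts the corollary as a special case without spelling out these checks, so your write-up supplies precisely the details the paper leaves implicit.
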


When we set $r = 2$, Corollary \ref{cor:important} yields an apparently new result about connected bipartite graphs. In fact, the number of unlabeled connected bipartite graphs \textit{without endpoints} is already in the OEIS \cite{oeis}. Thus, Corollary \ref{cor:important} proves that this OEIS entry also counts the unlabeled connected \textit{point-determining} bipartite graphs (except for the graph on two vertices).

\section{Acknowledgements}

The authors would like to thank the math faculty at Carleton College, in particular Rafe Jones for reviewing our paper. Most importantly, we would like to thank Andrew Gainer-Dewar for teaching us all about species theory, advising and helping us on this project, and showing us the real way to make tea.

\begin{appendices}
\section{Using Sage to do species theory} \label{appA}
This appendix serves to illustrate the concepts of species theory by using the open-source software system Sage \cite{Sage}. Sage is a language written on top of Python. It allows us to define species and use its theory to gain tangible results. With Sage, we can enumerate, calculate, and build upon different types of species.

First we can define some species that are already built-in in Sage, for example, the singleton species.

\begin{example} \end{example}

\begin{code}
sage: X = species.SingletonSpecies()
sage: print X
Singleton species
sage: x = X.structures([3])
sage: x
Structures for Singleton species with labels [3]

\end{code}

For a species $\s{F}$, we can illustrate the set of $\s{F}$-structures and the transports in Sage as in the following example:

\begin{example}
We apply the transport of permutation $(12)$ on the permutation species $\s{F}$ for the set of labels $\{1, 2, 3\}$
\end{example}

\begin{code}
sage: sigma = PermutationGroupElement((1,2))
sage: P = species.PermutationSpecies()
sage: p = P.structures([1,2,3])
sage: for i in p.list():
....:    print i,",",i.transport(sigma),"\n-------"
....:     
[1, 2, 3] , [1, 2, 3] 
-------
[1, 3, 2] , [3, 2, 1] 
-------
[2, 1, 3] , [2, 1, 3] 
-------
[2, 3, 1] , [3, 1, 2] 
-------
[3, 1, 2] , [2, 3, 1] 
-------
[3, 2, 1] , [1, 3, 2] 
-------

\end{code}

Notice that the action taking place on $[1, 3, 2]$ is $(23)$. Then by applying $(12)$ to $(23)$, we end up with $(13)$. Therefore, after the transport, we end up with the permutation $[3, 2, 1]$.

Given a species of $\s{F}$-structures, the three associated series can be computed in Sage as follows:

\begin{example}
We compute three types of associated series for the species $\s{G}$ of simple graphs:
\end{example}

\begin{code}
sage: G = species.SimpleGraphSpecies()
sage: G.generating_series()
1 + x + x^2 + 4/3*x^3 + 8/3*x^4 + O(x^5)

sage: G.isotype_generating_series()
1 + x + 2*x^2 + 4*x^3 + 11*x^4 + 34*x^5 + 156*x^6 + 1044*x^7 + 
12346*x^8 + 12346*x^9 + 12346*x^10 + 12346*x^11 + ...

sage: G.cycle_index_series()
p[]*1 + p[1]*x + (p[1,1]+p[2])*x^2 + (4/3*p[1,1,1]+2*p[2,1]+
2/3*p[3])*x^3 +
(8/3*p[1,1,1,1]+4*p[2,1,1]+2*p[2,2]+4/3*p[3,1]+p[4])*x^4 + O(x^5)

\end{code}

Sage also lets us perform the combinatorial operations on the species. 

\begin{example}
We compute the cycle index series of the species of Partitions using Sage. Notice that $\textsc{Part} = \s{E} \circ \s{E}_+$ where $\textsc{Part}$ is the species of partitions, $\s{E}$ the species of sets, and $\s{E}_+$ the non-empty species of sets. Computation in Sage tells us the following.

\begin{code}
sage: E = species.SetSpecies()
sage: Epos = species.SetSpecies(1)
sage: Part = E(Epos)
sage: Partitions = species.PartitionSpecies()

sage: Part.generating_series().coefficients(10)
[1, 1, 1, 5/6, 5/8, 13/30, 203/720, 877/5040, 23/224, 1007/17280]

sage: Partitions.generating_series().coefficients(10)
[1, 1, 1, 5/6, 5/8, 13/30, 203/720, 877/5040, 23/224, 1007/17280]

sage: Part.cycle_index_series().coefficients(6)
[p[], p[1], p[1, 1] + p[2], 5/6*p[1, 1, 1] + 3/2*p[2, 1] +
2/3*p[3], 5/8*p[1, 1, 1, 1] + 7/4*p[2, 1, 1] + 7/8*p[2, 2] +
p[3, 1] + 3/4*p[4], 13/30*p[1, 1, 1, 1, 1] + 5/3*p[2, 1, 1, 1] +
3/2*p[2, 2, 1] + 7/6*p[3, 1, 1] + 5/6*p[3, 2] + p[4, 1] + 2/5*p[5]]

sage: Partitions.cycle_index_series().coefficients(6)
[p[], p[1], p[1, 1] + p[2], 5/6*p[1, 1, 1] + 3/2*p[2, 1] +
2/3*p[3], 5/8*p[1, 1, 1, 1] + 7/4*p[2, 1, 1] + 7/8*p[2, 2] +
p[3, 1] + 3/4*p[4], 13/30*p[1, 1, 1, 1, 1] + 5/3*p[2, 1, 1, 1] +
3/2*p[2, 2, 1] + 7/6*p[3, 1, 1] + 5/6*p[3, 2] + p[4, 1] + 2/5*p[5]]

sage: Part.isotype_generating_series().coefficients(10)
[1, 1, 2, 3, 5, 7, 11, 15, 22, 30]

sage: Partitions.isotype_generating_series().coefficients(10)
[1, 1, 2, 3, 5, 7, 11, 15, 22, 30]
\end{code}

Although not a rigorous proof, the fact that we see the exact same leading coefficients for the first few terms of all three types of generating series is a good sign that we have successfully calculated the species of Partitions.
\end{example}

\begin{example}
Now, we will recursively calculate the species of Linear Orderings by illustrating the fact that $\s{L} = \mathbf{1} + X*\s{L}$ where $\s{L}$ is the species of Linear Orderings, $\mathbf{1}$ is the species of the Empty Set, and $X$ is the Singleton Species.

\begin{code}
sage: L = CombinatorialSpecies()
sage: L.define(species.EmptySetSpecies()+X*L)
sage: L.cycle_index_series().coefficients(10)
[p[], p[1], p[1, 1], p[1, 1, 1], p[1, 1, 1, 1],
p[1, 1, 1, 1, 1], p[1, 1, 1, 1, 1, 1], p[1, 1, 1, 1, 1, 1, 1],
p[1, 1, 1, 1, 1, 1, 1, 1], p[1, 1, 1, 1, 1, 1, 1, 1, 1]]
\end{code}
\end{example}

\begin{example}
Sage also allows us to set up environments that respect the nature of virtual species. Specifically, we are able to create the compositional inverse of a specific cycle index along with computing the cycle index for the combinatorial logarithm $\Omega$. We first set up the environment (the following computational work was done by Andrew Gainer-Dewar).

\begin{code}
sage: from sage.combinat.species.stream import Stream,
_integers_from
sage: from sage.combinat.species.generating_series import
CycleIndexSeriesRing
sage: CIS = CycleIndexSeriesRing(QQ)
sage: p = SFAPower(QQ)
\end{code}

Then we compute the compositional inverse of a specified cycle index.

\begin{code}
sage: def ci_compinv( f ):
....:     result = CIS()
....:     result.define(X.cycle_index_series() -
(f - X.cycle_index_series()).compose(result))
....:     return result
\end{code}

We now compute the cycle index for the combinatorial logarithm $\Omega$ (The process of calculating this cyle index is motivated by the work in \cite{LaBelle}).

\begin{code}
sage: def omegaterm(n):
....:     if n == 0:
....:         return 0
....:     elif n == 1:
....:         return p[1]
....:     else:
....:         return 1/n * ((-1)^(n-1) * p[1]**n - sum(d *
p([Integer(n/d)]).plethysm(omegaterm(d)) for d in divisors(n)
[:-1]))

sage: def omegagen():
....:     for n in _integers_from(0):
....:         yield omegaterm(n)

sage: Omega = CIS(omegagen())
\end{code}
\end{example}

\section{Using Sage to find the cycle index of \texorpdfstring{$\s{PBP}$}{PBP}} \label{appB}

This appendix illustrates the calculation of \[\s{PBP} = \s{E}\circ (\s{CBC}/S_2)\circ \Omega\] on the cycle index series level using Sage.

By Definition \ref{thm:quotspeciescis}, the cycle index \[Z_{\s{CBP}} = \frac{1}{2}(Z_{\s{CBC}}(e) + Z_{\s{CBC}}(t)).\] Thus, we can use Sage to calculate the cycle index of $\s{PBP}$.

All of this code is due to Andrew Gainer-Dewar. A preliminary version of the code appears in ~\cite{dissertation}.
\subsection*{Preliminaries}

First, we set up the Sage environment, and create the cycle index series ring and the cycle index for $\Omega$ as shown in Section 4. We also create a couple of small functions for dealing with partitions, union(mu, nu), which combines the parts of $\mu$ and the parts of $\nu$ into one larger partition, and partmult(mu, n), which returns a partition where each part is $n$ times the corresponding part of $\mu$.

\begin{code}[caption = {Set up Sage environment}, language=Python]
sage: from sage.combinat.species.stream import Stream,
_integers_from
sage: from sage.combinat.species.generating_series import
CycleIndexSeriesRing
sage: CIS = CycleIndexSeriesRing(QQ)
sage: p = SFAPower(QQ)
\end{code}

\begin{code}[caption = {Define partition functions}, language=Python]
sage: def union( mu, nu):
....:     return Partition(sorted(mu.to_list() + nu.to_list(),
            reverse=true))
    
sage: def partmult( mu, n ):
....:     return Partition([part * n for part in mu.to_list()])
\end{code}

\subsection*{Calculating $\s{BC}$ and $\s{CBC}$}

The following code calculates the cycle index for the $\Gamma$-species $\s{BC}$ for both the identity element, $e$, and the nonidentity element $t$. A description of the workings of this code is under the block.

\begin{code}[caption = {Calculate $\Gamma$--cycle index for $\s{BC}$}, language=Python]
sage: def efixedbcgraphs( mu, nu ):
....:     return 2**(sum([gcd(i, j) for i in mu for j in nu]))

sage: def ebcgen():
....:     yield p(0)
....:     for n in _integers_from(1):
....:         yield sum(p(union(pair[0], pair[1]))
                /(pair[0].aut() * pair[1].aut())
                  * efixedbcgraphs(pair[0], pair[1])
                    for pair in PartitionTuples(2, n))

sage: def tfixedbcgraphs( mu ):
....:     return 2**(len(mu) + sum([integer_ceil(p/2) for p in mu])
            + sum([gcd(mu[i], mu[j]) for i in range(0, len(mu))
              for j in range(i+1, len(mu))]))

sage: def tbcgen():
....:     yield p(0)
....:     for n in _integers_from(1):
....:         yield p(0)
....:         yield sum(tfixedbcgraphs( mu ) * p(partmult(mu, 2))
                /partmult(mu, 2).aut() for mu in Partitions(n))

sage: BC = {e: CIS(ebcgen()), t: CIS(tbcgen())}
\end{code}

Looking at this code, we see that the cycle indices $Z_{\s{CBC}}(e)$ and $Z_{\s{CBC}}(t)$ are given by the functions ebcgen() and tbcgen(). For each $n$ and the identity element of $S_2$, $\pi\in S_n$ can only fix $g\in\s{CBC}$ if $\pi$ is decomposible into a permutation $\pi_1$ of the blue vertices and a permutation $\pi_2$ of the red vertices. Therefore, ebcgen() sums over pairs of partitions, representing the cycle types of $\pi_1$ and $\pi_2$. Then the function efixedbcgraphs(pair[0], pair[1]) is called to determine how many elements of $\s{CBC}$ are fixed under $\pi$. This result is then divided by $z_\mu z\nu$, as given in \cite[Theorem~2.2.1]{2007}. efixedbcgraphs computes how many elements of $\s{CBC}$ are fixed by $\pi$ if $\pi_1$ has cycle type $\mu$ and $\pi_2$ has cycle type $\nu$, which is also given in by \cite[Theorem~2.2.1]{2007}.

The function tbcgen() works by calling tfixedbcgraphs() in a similar way, using the result of \cite[Theorem~2.2.2]{2007}.

The following code performs the $S_2$-quotient and calculates the cycle index of $\s{CBC}$ from the $\Gamma$--cycle index of $\s{BC}$. Note that the $e$-term of the cycle index is just the usual formula $Z_{\s{CBC}}(e) = Z_\Omega\circ Z_{\s{BC}}(e)$. The $t$-term of the composition is an encoding of Definitions \ref{def:gammaComposition} and \ref{def:gammaPlethysm}.

\begin{code}[caption = {Calculate $\Gamma$--cycle index for 
$\s{CBC}$}, language=Python]
sage: def CBCtermmap(term):
....:     if term == 0:
....:         return CIS(0)
....:     termbuilder = lambda part: prod(BC[t**p].stretch(p)
            for p in part)
....:     return sum(coeff*termbuilder(part) for part,coeff in term)

sage: CBC = {e: Omega.compose(BC[e]),
        t: CIS.sum_generator(CBCtermmap(Omega.coefficient(i))
          for i in _integers_from(0))}
\end{code}

\subsection*{Calculating $\s{PBP}$}

Once we have the cycle index for $\s{CBC}$, we are almost there. By Lemma \ref{lem:pbp}, we calculate the cycle index for \[\s{PBP} = \s{E}\circ (\s{CBC}/S_2)\circ \Omega\] as follows.

\begin{code}[caption = {Calculate cycle index for $\s{PBP}$}, language=Python]
sage: CBP = 1/2*(CBC[e]+CBC[t])

sage: BP = E.compose(CBP)

sage: PBP = BP.compose(Omega)

sage: PBP.coefficients(6)
[p[], p[1], 1/2*p[1, 1] + 1/2*p[2], 1/2*p[1, 1, 1] + 1/2*p[2, 1],
5/8*p[1, 1, 1, 1] + 1/4*p[2, 1, 1] + 7/8*p[2, 2] + 1/4*p[4],
9/8*p[1, 1,1, 1, 1] + 1/4*p[2, 1, 1, 1]
+ 11/8*p[2, 2, 1] + 1/4*p[4, 1]]

sage: PBP.generating_series().coefficients(10)
[1, 1, 1/2, 1/2, 5/8, 9/8, 125/48, 123/16, 11129/384, 17643/128]

sage: PBP.isotype_generating_series().coefficients(10)
[1, 1, 1, 1, 2, 3, 8, 17, 63, 224]
\end{code}

Note that the exponential generating series and the type generating series start with the term $n=0$. Thus, we have determined the cycle index for point-determining bipartite graphs. We have also done a similar calculation for the connected point-determining bipartite graphs, using Corollaries \ref{cor:pd} and \ref{cor:psolve} (not shown here). The first few terms of the labeled and unlabeled point-determining bipartite graphs and connected point-determining bipartite graphs are shown below.
\begin{table}[ht]
	\begin{tabular}	{r|l|l}
$n$ & {\bf Labeled} & {\bf Unlabeled}\\
\hline
0&1&1\\
1&1&1\\
2&1&1\\
3&3&1\\
4&15&2\\
5&135&3\\
6&1875&8\\
7&38745&17\\
8&1168545&63\\
9&50017905&224\\
10&3029330745&1248\\
11&257116925835&8218\\
12&30546104308335&75992\\
13&5065906139629335&906635\\
14&1172940061645387035&14447433\\
15&379092680506164049425&303100595\\
16&171204492289446788997825&8415834690\\
17&108139946568584292606269025&309390830222\\
18&95671942593719946611454522225&15105805368214\\
19&118699636146295502809945048489875&982300491033887\\
20&206821794864679268333769991824317775&85356503319933261
	\end{tabular}
    \caption{Enumeration of point-determining bipartite graphs on $n$ vertices.}
\end{table}

\begin{table}[ht]
	\begin{tabular}	{r|l|l}
$n$ & {\bf Labeled} & {\bf Unlabeled} \\
\hline
0 & 0 & 0 \\
1 & 1 & 1\\
2 & 1 & 1\\
3 & 0 & 0\\
4 & 12 & 1\\
5 & 60 & 1\\
6 & 1320 & 5\\
7 & 26880 & 9\\
8 & 898800 & 45\\
9 & 40446000 &160\\
10 & 2568736800 &1018\\
11 & 225962684640 & 6956\\
12 & 27627178692960 & 67704\\
13 & 4686229692144000 & 830392\\
14 & 1104514965434200320 & 13539344\\
15 & 361988888631722352000 & 288643968\\
16 & 165271302775469812521600 & 8112651795\\
17 & 105278651889065640047462400 & 300974046019\\
18 & 93750696652129931568573619200 &14796399706863 \\
19 & 116899866711712459270623087360000 &967194378235406\\
20 & 204465611975190360222598610427187200 &84374194347669628
	\end{tabular}
    \caption{Enumeration of connected point-determining bipartite graphs on $n$ vertices.}
\end{table}

\end{appendices}

\pagebreak
\begin{figure}[h!]
\[\begin{tikzpicture}

\node[draw=none] at (0,0){};
\end{tikzpicture}\]
\end{figure}

\pagebreak

\begin{figure}[h!]
\[\begin{tikzpicture}

\node[draw=none] at (0,0){};
\end{tikzpicture}\]
\end{figure}

\pagebreak


\begin{thebibliography}{1}

\bibitem{1998} F. Bergeron, G. Labelle, and P. Leroux. \emph{Combinatorial Species and Tree-like Structures}. Encyclopedia of Mathematics and its Applications. Cambridge University Press, 1998.

\bibitem{Bousquet} Michel Bousquet. ``Esp\`{e}ces de structures et applications au d\'{e}nombrement de cartes et de cactus planaires'' (Ph.D. thesis). Universit\'{e} du Qu\'{e}bec \`{a} Montr\'{e}al, 1999. Available from LaCIM at \url{http://lacim.uqam.ca/publications} and as Publications du LaCIM 24.

\bibitem{dissertation} Andrew Gainer. ``$\Gamma$-species, Quotients, and Graph Enumeration'' (Ph.D. diss.). Brandeis University, April 2012.

\bibitem{Andrew_Paper} Andrew Gainer-Dewar. ``$\Gamma$-species and the Enumeration of $k$-Trees''. \emph{The Electronic Journal of Combinatorics} 19.4 (2012). (\url{http://www.combinatorics.org/ojs/index.php/eljc/article/view/v19i4p45}).

\bibitem{2007} Ira M. Gessel and Ji Li. ``Enumeration of Point-Determining Graphs''. \emph{Journal of Combinatorial Theory} 118.2.A (February 2011). Pages 591-612. 
(\url{http://www.sciencedirect.com/science/article/pii/S0097316510000592}).

\bibitem{Joyal} Andr\'{e} Joyal. ``Une th\'{e}orie combinatoire des s\'{e}ries formelles''. Advances in Mathematics 42 (1981). 1--82.

\bibitem{LaBelle} Gilbert Labelle. ``New combinatorial computational methods arising from pseudo-singletons''. \emph{DMTCS Proceedings} (December 2008).

\bibitem{oeis} OEIS Foundation Inc. \emph{The On-Line Encyclopedia of Integer Sequences}, \url{http://oeis.org/A088974}. (Accessed 2013.)

\bibitem{Sage}
W.\thinspace{}A. Stein et~al. \emph{{S}age {M}athematics {S}oftware ({V}ersion
  5.6--5.7)}. The Sage Development Team, 2012--3, (\url{http://www.sagemath.org}).


\end{thebibliography}
\end{document}